\def\tsc#1{\csdef{#1}{\textsc{\lowercase{#1}}\xspace}}
\newcommand{\Div}{\text{div}}
\newcommand{\DivS}{\text{div}^S}
\newcommand{\HessS}{\mathcal{H}^S}
\newcommand{\grad}{\nabla}
\newcommand{\gradS}{\nabla^S}
\newcommand{\pLap}{\Delta_p}
\newcommand{\pLapS}{\pLap^S}
\newcommand{\infLap}{\Delta_{\infty}}
\newcommand{\infLapS}{\infLap^S}
\newcommand{\lamp}{\lambda_{p}}
\newcommand{\Lamp}{\Lambda_{p}}
\newcommand{\lampApprox}{\lambda_{p, h}}
\newcommand{\LamInf}{\Lambda_{\infty}}
\newcommand{\up}{u_{p}}
\newcommand{\upApprox}{u_{p, h}}
\newcommand{\uInf}{u_{\infty}}
\newcommand{\bx}{{\bf x}}
\newcommand{\bgamma}{\boldsymbol{\gamma}}
\newcommand{\bv}{{\bf v}}
\newcommand{\by}{{\bf y}}
\newcommand{\distx}{\mathrm{dist}(\bx, \partial \Omega)}
\newtheorem{theorem}{Theorem}[section]
\newtheorem{proposition}[theorem]{Proposition}
\newtheorem{definition}[theorem]{Definition}
\newtheorem{remark}[theorem]{Remark}
\begin{document}

\title{\bfseries Numerical approximation of the first $p$-Laplace eigenpair}

\author{
Hannah~Potgieter\thanks{Department of Mathematics, Simon Fraser University, Burnaby, British Columbia, Canada} \thanks{Corresponding author. Email: hpa59@sfu.ca}
\and Razvan~C.~Fetecau\footnotemark[1]
\and Steven~J.~Ruuth\footnotemark[1] }

\date{\today}

\maketitle

% Here goes the abstract
\begin{abstract}
We approximate the first Dirichlet eigenpair of the $p$-Laplace operator for $2 \leq p < \infty$ on both Euclidean and surface domains. We emphasize large $p$ values and discuss how the $p \to \infty$ limit connects to the underlying geometry of our domain. Working with large $p$ values introduces significant numerical challenges. We present a surface finite element numerical scheme that combines a Newton inverse-power iteration with a new domain rescaling strategy, which enables stable computations for large $p$. Numerical experiments in $1$D, planar domains, and surfaces embedded in $\mathbb{R}^3$ demonstrate the accuracy and robustness of our approach and show convergence towards the $p \to \infty$ limiting behavior.

\vspace{0.5cm}

\textbf{Keywords:}
$p$-Laplacian, infinity Laplacian, nonlinear eigenvalue problems, surface finite element method, Dirichlet eigenvalue problems, $p \to \infty$ limit
\end{abstract}

\maketitle

% Main text

\section{Introduction}
\label{sec:intro}
The $p$-Laplacian is a nonlinear generalization of the classical Laplacian operator, given in Euclidean domains by
\[\pLap u = \Div(|\grad u|^{p-2} \grad u),\]
where $2 \leq p < \infty$ is the considered range of $p$ values. Note that $p = 2$ reduces to the linear Laplace operator. For surface (and more generally, manifold) domains, the differential operators are intrinsic and $p=2$ reduces to the Laplace-Beltrami operator.  In this work, we are concerned with the numerical approximation of the first Dirichlet $p$-Laplace eigenpair on both Euclidean and surface domains, and in particular we investigate large $p$ values.

As $p$ becomes large, numerically handling the $p$-Laplace operator presents challenges and has significant geometric implications associated with the bounded domain $\Omega$ under consideration. This motivates our numerical study. Approximating the first eigenpair of the $p$-Laplacian for large $p$ introduces several difficulties. The $p$-Laplacian is highly nonlinear and becomes increasingly degenerate as $p$ increases~\cite{Takac2004}. 
For the nonlinear case ($p > 2$) we have eigenfunctions lying in $C^{1, \sigma}(\overline{\Omega})$ where $0 < \sigma < 1$~\cite{Barles1988, Bellonietal2006, KawaiNakauchi2003}, which means the numerical approximation method needs to be robust against reduced smoothness. Several numerical methods have been employed to approximate the first eigenpair of the $p$-Laplacian~\cite{Biezuneretal2012, BognarSzabo2003,Horak2011,Patraetal2019}. We build on these methods and employ a surface finite element method (SFEM) discretization combined with a Newton inverse-power iteration scheme. 

The study of the first eigenpair of the $p$-Laplace operator has attracted attention, both theoretically and numerically. The $p \to \infty$ limit has connections with the geometry of the underlying domain. Juutinen et al.~\cite{Juutinenetal1999} established that the $p$th root of the first eigenvalue $\lamp$ converges to the inverse of the maximum distance-to-boundary, i.e., $\lamp^{1/p} \to \| \mathrm{dist}(\cdot, \partial \Omega) \|_\infty^{-1}$
as $p \to \infty$, and as noted in~\cite{Grosjean2005}, this also holds for surface domains. 
Further, the first eigenfunction, suitably normalized, converges to the distance-to-boundary function under a certain symmetry assumption on the domain, which we discuss in Section~\ref{sec:first}. 
Also, Barles~\cite{Barles1988} and Kawohl \& Lindqvist~\cite{KawohlLindqvist2006} proved in the Euclidean setting that the first eigenpair is unique (modulo scaling the eigenfunction) and that the first eigenfunction is positive. There are some works which extend some of these results beyond Euclidean space. Grosjean~\cite{Grosjean2005} extended the eigenvalue limit as $p \to \infty$ to Riemannian manifolds, and Belloni et al.~\cite{Bellonietal2006} examined the entire limiting eigenvalue problem on domains equipped with Finsler metrics. 

In the one-dimensional case, we can  characterize the first eigenpair of the $p$-Laplacian explicitly. This is the only case for which we know the explicit eigenpairs for any $p \in [2, \infty]$, as derived by Lindqvist~\cite{Lindqvist1993} and Edmunds \& Lang~\cite{EdmundsLang2015} using generalized trigonometry. For our numerical purposes, these explicit expressions of the eigenpairs allow us to test the performance of our numerical schemes on exact solutions.

Numerically, we may use the inverse-power method to update the eigenvalue and eigenpair alongside one another. In Euclidean domains, Bozorgnia~\cite{Bozorgnia2016} proved convergence to the unique first eigenpair by alternately updating $\lamp$ and $\up$. Existing numerical approaches are however not robust against large $p$ values on general domains. Additionally, numerical literature which considers surface domains for $p > 2$ is presented for specialized cases. For example, El Soufi et al.~\cite{ElSoufietal2012} numerically approximate the first eigenpair for low $p$ values on the unit sphere by discretizing the radially symmetric ordinary differential equation. Lanza et al.~\cite{Lanzaetal2025} consider surfaces represented as graphs also for low $p$ values, primarily $p \in (1,2)$. Finite element, constrained descent, and spline-based formulations~\cite{Biezuneretal2012,BognarSzabo2003,Horak2011,Patraetal2019} have all been implemented  for Euclidean domains, but report eigenpairs only for small values of $p$ (up to about $p = 10$). However, Biezuner et al.~\cite{Biezuneretal2012} show computed eigenvalues up to $p=290$ for the restricted case of Euclidean unit balls.
Recently, Bozorgnia et al.~\cite{Bozorgniaetal2024} introduced a monotone scheme specifically for the $p=\infty$ case  and report results on planar domains. To our knowledge, no works have yet bridged the gap between (small) finite $p$ and the $p \to \infty$ limit for the principal eigenpair on different domains.  

Studying the first Dirichlet $p$-Laplace eigenpair for large $p$ values is important both computationally and in applications such as shape optimization~\cite{Antunes2019, Mohammadietal2019}. From a numerical standpoint, an accurate approximation of the first eigenpair is a prerequisite for algorithms that compute higher modes, since these typically rely on the principal mode as an initialization or reference~\cite{Horak2011, Lanzaetal2025, YaoZhou2007}. In addition to its algorithmic role, the first eigenpair itself carries meaningful geometric information: as $p$ increases, the eigenfunction develops a distance-like structure in the large $p$ limit that reflects the shape of the domain. 

Beyond the first eigenpair, several works in the Euclidean setting have focused on the computation of the second Dirichlet $p$-Laplacian eigenpair through its characterization as an optimal bi-partition problem. In symmetric domains, Bozorgnia \& Arakelyan~\cite{BozorgniaArakelyan2023} exploit this structure to construct an algorithm specifically targeting the second eigenpair, while also emphasizing the associated optimal bi-partitioning problem as an application in its own right. More generally, Bobkov \& Ğalimov~\cite{BobkovGalimov2025} propose an iterative nodal-balancing framework that is motivated by the second eigenvalue problem but whose convergence behavior depends on the choice of initialization, and may therefore lead to higher eigenpairs in practice. The use of $p$-Laplace eigenpairs has recently attracted attention in modern data-driven and geometric applications. In particular, Lanza et al.~\cite{Lanzaetal2025} proposed a variational Alternating Direction Method of Multipliers (ADMM) framework for computing multiple graph $p$-Laplacian eigenpairs, demonstrating promising results for clustering and dimensionality reduction tasks. Their formulation, however, is theoretically restricted to the range $1 < p \leq 2$, where convexity of the subproblems is preserved.

\subsection{Contributions}
\label{subsec:contrib}

This work investigates numerical approximation of the first Dirichlet 
$p$-Laplace eigenpair for $2 \leq p < \infty$ on both Euclidean and surface 
domains, with particular emphasis on the large $p$ regime. We introduce a 
Newton inverse-power iteration scheme combined with a surface finite element 
(SFEM) discretization, extending prior FEM-based approaches to the surface 
setting. A continuation method in $p$, together with a domain rescaling 
strategy, enables stable computation of the first eigenpair for large $p$ values. We provide what appears to be the first systematic treatment of the surface $p$-Laplacian eigenvalue problem across a wide range of $p$.

Previous numerical studies, to our knowledge, have reported results only up 
to $p = 10$ unless restricting to Euclidean unit balls, whereas we show computations for $p$ as large as $100$. This extended $p$ range allows us to explore the limiting relationship between the first eigenpair and the distance-to-boundary function, bridging the gap between finite $p$ computations and the asymptotic $p \to \infty$ regime. A key feature of our approach is the new domain rescaling strategy, which mitigates eigenvalue growth or decay with respect to $p$, thereby improving the numerical conditioning of the problem. 

We validate the method through one-dimensional convergence tests against known 
analytical solutions and through self-convergence studies in higher dimensions 
where exact solutions are unavailable. To support the numerical findings, we include in Appendix~\ref{app:sinp-regularity} a derivation establishing the regularity class $C^{1,1/(p-1)}$ of the one-dimensional eigenfunction which explains the observed loss of smoothness for large $p$. Finally, we discuss the large $p$ behavior of the eigenpairs on surfaces, and our computational observations are consistent 
with the expected asymptotic properties of the $p$-Laplacian.

\section{Dirichlet $p$-Laplace eigenvalue problem}
\label{sec:Dirichlet}

For a given surface $S$ lying in $\mathbb{R}^3$, we consider the $p$-Laplace operator on the surface $S$, given by 
\[
\pLapS u = \DivS \Bigl( |\gradS u |^{p-2} \gradS u \Bigr),
\]
where the superscript $S$ denotes a differential operator intrinsic to the surface and $|\cdot|$ indicates the Euclidean norm. 

For surfaces of codimension $1$, the intrinsic gradient and divergence are given in terms of the corresponding Euclidean operators by
\begin{equation*}
    \gradS u = \grad \tilde u - \mathbf{n} (\mathbf{n} \cdot \grad  \tilde u), \qquad 
    \DivS( \mathbf{ v}) = \Div (\mathbf{\tilde v}) - \mathbf{n}^{\top} \grad \mathbf{\tilde v} \: \mathbf{n},  
\end{equation*}
where $\mathbf{n}$ is the unit vector normal to the surface $S$~\cite{DziukElliott2013}. Here $\tilde u$ and $\mathbf{\tilde v}$ are smooth extensions of $u$ and $\mathbf{v}$ to a tubular neighborhood of $S$. 

Let $\Omega$ be an open, bounded subset of $S$, with $p \geq 2$ fixed. The Dirichlet $p$-Laplace eigenvalue problem on $\Omega \subset S$ (cf.,~\cite{Mao2014}) is given by
\begin{subequations}
    \begin{alignat}{3}
    - \pLapS \up 
    = \lamp |\up |^{p-2} \up, & \qquad  \text{ in } \Omega, \label{eqn:pLapEval}\\
    \up = 0, & \qquad \text{ on } \partial \Omega,  \\
    \|\up\|_{\infty} = 1 & \label{eqn:normalize}.
    \end{alignat}
    \label{eqn:e-problem}
\end{subequations}
 Note that we impose the normalization condition~\eqref{eqn:normalize} to ensure uniqueness, as otherwise solutions of~\eqref{eqn:pLapEval} are only unique up to rescaling. Another common normalization is $\|\up\|_{p} = 1$~\cite{Grosjean2005}; however for numerical convenience we choose the infinity norm. 

We interpret the Dirichlet $p$-Laplace eigenvalue problem in the classical weak sense as in~\cite{Lindqvist1993}, which considers Euclidean domains. Note the weak form generalizes directly to the surface setting~\cite{DziukElliott2013}. Moreover, the weak formulation lends itself naturally to the 
SFEM discretization employed in our computations. Specifically, the pair $(\lamp,\up)\in \mathbb{R}^+ \times W^{1,p}_0(\Omega)$,
with $\up\not\equiv 0$, is called an eigenpair if
\[
\int_{\Omega} \bigl|\gradS \up\bigr|^{p-2}\,
      \gradS \up \cdot \gradS \varphi \, dA
   = \lamp
     \int_{\Omega} |\up|^{p-2} \up \,\varphi \, dA, 
   \qquad
   \forall \varphi \in  C_0^\infty (\Omega).
\]

The first (principal) eigenpair admits a variational characterization as the minimizer of a nonlinear Rayleigh quotient, which we will introduce in Section~\ref{sec:first}. In contrast, higher eigenpairs do not enjoy such a simple minimization property because the operator is nonlinear and the space $W_0^{1,p}(\Omega)$ is not a Hilbert space for $p > 2$. 
Consequently, orthogonality relations between eigenfunctions are lost, and higher modes must be defined through more delicate minimax principles~\cite{Horak2011, Lanzaetal2025, Lindqvist1993}. 
There remain many open problems concerning higher eigenpairs. While the variational spectrum of the Dirichlet $p$-Laplacian on bounded domains is known to form a discrete sequence of eigenvalues tending to $+ \infty$, it is not known in general whether this sequence exhausts all possible eigenvalues of the nonlinear operator; see~\cite{Lindqvist1993} and references therein.
For these reasons, we will restrict our attention to the principal eigenpair, which is central to understanding the nonlinear behavior of the operator.

For $p > 2$, it is known that the first eigenfunction $\up$ is a positive function lying in $C^{1, \sigma}(\overline{\Omega})$ with $0 < \sigma < 1$~\cite{Barles1988, Bellonietal2006, KawaiNakauchi2003}. 
In Appendix~\ref{app:sinp-regularity} we show that $\sigma \to 0$ as $p \to \infty$ for the one-dimensional case. Our numerical results in Section~\ref{sec:results} suggest that in higher dimensions, similarly to $1$D, the smoothness reduces with increasing $p$.

Of particular interest in our study is the large $p$ behavior and the limiting behavior of both eigenvalues and eigenfunctions as $p \to \infty$. In this limit, the divergence structure of the $p$-Laplacian is lost, and solutions must be interpreted in a viscosity sense. We will discuss the asymptotic behavior of the first eigenpair and its associated geometric properties in the next section.

\section{The first eigenpair}
\label{sec:first}

The first $p$-Laplace eigenvalue is given by
\begin{equation*}
    \lamp := \inf_{u \in W_0^{1, p}, u \not\equiv 0} R_p(u; \Omega), 
\end{equation*}
where $R_p(u; \Omega)$ is the nonlinear Rayleigh quotient 
\begin{equation}
    R_p(u; \Omega) = \frac{\int_\Omega |\gradS u |^p \: dA}{\int_\Omega | u |^p\: dA} = \frac{\|\gradS u\|_p^p}{\|u\|_p^p}.
    \label{eqn:Rayleigh}
\end{equation}
Minimizing the Rayleigh quotient gives rise to the Euler-Lagrange equation~\eqref{eqn:pLapEval}~\cite{Grosjean2005}. The nonlinear term on the right-hand side of~\eqref{eqn:pLapEval} arises naturally from minimizing the Rayleigh quotient, subject to a normalization constraint. The minimizer $u_p$ represents the first (or principal) eigenfunction of the $p$-Laplacian and is unique up to rescaling~\cite{Barles1988}.

The stability of the principal frequency $\lamp$ as $p$ varies is discussed by Lindqvist~\cite{Lindqvist1993qoutients}, where it is proven that
\begin{equation}
    p \left(\lamp\right)^{1/p} < s \left(\lambda_s\right)^{1/s}
    \label{eqn:mono}
\end{equation}
when $1 < p < s < \infty$. Note that there is not a similar monotonicity inequality for just the principal eigenvalues themselves, and in particular $\lambda_p$ may increase or decrease with $p$ depending on the domain as evidenced by a rescaling relation discussed later in Section~\ref{subsec:largep}. The proof of this monotonicity property given in~\cite{Lindqvist1993qoutients} extends directly to surfaces, with primary ingredients being a strategic choice of test function and application of the H\"older inequality. 
 We confirm that our numerical approximations of the eigenvalues adhere to~\eqref{eqn:mono} -- see Figure~\ref{fig:13_ineq_check} in Section~\ref{sec:results}.

The first eigenfunction possesses several noteworthy properties. In particular, the first eigenfunction is simple and does not change sign (see~\cite{Lindqvist1993} for a proof in the Euclidean setting). 
Literature for the principal $p$-Laplace eigenpair on manifolds has largely focused on analytical properties and inequalities~\cite{Grosjean2005, KawaiNakauchi2003, Mao2014}. Numerical works appear limited to specialized cases such as low $p$ values on the unit sphere~\cite{ElSoufietal2012}. In the following subsection we elaborate on the limiting behavior of the principal eigenpair as $p \to \infty$.

\subsection{Asymptotic behavior}
\label{subsec:asymptotic}

The first eigenpair tells us geometric information about the underlying domain. In particular, as $p \to \infty$ we find a connection to the distance function. Section~\ref{sec:results} demonstrates this connection through our numerical experiments. Moreover, knowing the asymptotic behavior of the eigenvalues guides the development of our domain rescaling strategy which allows  computations for large $p$ (see Section~\ref{subsec:largep}). 

Formally, taking a limit $p \to \infty$ involving the $p$-Laplacian Rayleigh quotient leads to the infinity eigenvalue and associated eigenvalue problem. Raising the quotient~\eqref{eqn:Rayleigh} to the power $1/p$ and taking the infimum, gives
\[
\Lamp :=  \lamp^{1/p} = \inf_{u \in W_0^{1, p}, u \not\equiv 0}  \frac{\|\gradS u\|_p}{\|u\|_p}.
\]
Then, formally taking the $p \to \infty$ limit we obtain the infinity eigenvalue
    \[\LamInf := \inf_{u \in W_0^{1, \infty}, u \not\equiv 0}  \frac{\|\gradS u\|_\infty}{\|u\|_\infty}. \]

The limiting PDE for the first eigenfunction of~\eqref{eqn:e-problem} as $p\to \infty$ is given by 
\begin{subequations}
    \begin{alignat}{3}
    \min \left \{|\gradS \uInf| - \LamInf  \uInf, -\infLapS \uInf \right \} = 0, & \qquad \text{in } \Omega, \label{eqn:limitpdea}\\
    \uInf = 0, & \qquad \text{on } \partial \Omega,  \\
    \|\uInf\|_{\infty} = 1 & , 
    \end{alignat}
    \label{eqn:limitpde}
\end{subequations}
where $\infLapS u = \left< \gradS u , \HessS \: u \gradS u\right>$ and $\HessS$ denotes the (intrinsic) Hessian on $S$. The limiting equations are derived in the Euclidean settings in~\cite{Juutinenetal1999} and generalized to non-Euclidean spaces in~\cite{Bellonietal2006}.  The derivation of~\eqref{eqn:limitpde} follows the same arguments used in~\cite{Juutinenetal1999} and extends to compact Riemannian manifolds almost directly, with minor notational changes and a standard application of Arzelà–Ascoli for compactness. We refer the reader to~\cite{Bellonietal2006} for the non-Euclidean setup.

Juutinen et al.~\cite{Juutinenetal1999} also show that 
\begin{equation}
    \lim_{p \to \infty } \Lamp =  \LamInf = \frac{1}{\underset{\bx \in \Omega}{\max} \:  \distx} = \frac{1}{R},
    \label{eqn:lamLimit}
\end{equation}
where $R$ is the radius of the largest ball which may be inscribed in $\Omega$. As noted in~\cite{Grosjean2005} and shown in~\cite{Bellonietal2006}, this holds beyond the Euclidean case and in particular for compact Riemannian manifolds. This limit can be used to ensure that our computed eigenvalues have the correct limiting behavior as $p \to \infty$. 

For some simple symmetric domains, 
the limiting first eigenfunction is given by the normalized distance-to-boundary function given by
\begin{equation}
\label{eqn:uinf}
\uInf(\bx ) = \frac{\distx}{\| \mathrm{dist}(\cdot,\partial \Omega)\|_\infty}.
\end{equation}
Note that the distance function $\mathrm{dist}(\cdot,\partial \Omega)$ is not differentiable everywhere in $\Omega$, so $\uInf$ needs to be interpreted as a solution of~\eqref{eqn:limitpde} in the viscosity sense.

More precisely, as in~\cite{Juutinen1998} we define the ridge set as
\begin{equation}
\label{eqn:calR}
\mathcal{R} := \{ \bx  \in \Omega:  \operatorname{dist}(\cdot,\partial \Omega) \text{ is not differentiable at } \bx \},
\end{equation}
and the maximal distance-to-boundary set (which is a subset of $\mathcal{R}$) as
\begin{equation}
\label{eqn:calM}
\mathcal{M} := \{ \bx  \in \Omega:  \distx = \| \operatorname{dist}(\cdot,\partial \Omega)\|_\infty\}.
\end{equation}
For instance, on a geodesic ball both sets consist of the center point. On the other hand, in a square (or cube) the ridge set $\mathcal{R}$ comprises the diagonals connecting opposite vertices, and only their intersection at the center forms $\mathcal{M}$, so in this case $\mathcal{M}$ is a strict subset of $\mathcal{R}$.

In the special case when $\mathcal{R} = \mathcal{M}$, it can be shown that the normalized geodesic distance-to-boundary function indeed satisfies~\eqref{eqn:limitpde} in the viscosity sense -- see Appendix~\ref{app:surface-limits}. This assumption is satisfied by domains such as geodesic balls, stadiums, and annuli~\cite{CharroParini2010}. On such domains, we may check that numerically computed solutions are indeed converging to the normalized distance-to-boundary function as $p \to \infty$. 

\subsection{Analytic solution in $1$D} 
\label{subsec:1d}

In one dimension, the eigenvalue problem~\eqref{eqn:e-problem} on $\Omega = (a, b)$ is given by
\begin{subequations}
    \begin{alignat*}{3}
    -(|\up'|^{p-2} \up')' = \lamp |\up|^{p-2} \up & , \quad  a < x < b,\\
    \up(a) =  \up(b) = 0 & , \\
    \|\up\|_{\infty} = 1 & .
    \end{alignat*}
\end{subequations}
It is only in $1$D that the exact eigenpairs are known~\cite{Lindqvist1993}. 
Thus, it is crucial to make use of the exact $1$D eigenpairs in assessing performance of numerical schemes. In Section~\ref{subsec:1d-numerics}, we perform $1$D numerical tests. 

Lindqvist~\cite{Lindqvist1993} shows that the first eigenpair is given by
\begin{subequations}
\label{eqn:1dexact}
\begin{alignat}{2}
        \lamp &= (p-1) \left(\frac{\pi_p}{b-a}\right)^p ,\label{eqn:lamp1d} \\
        \up(x) &= \sin_p \left(\pi_p\frac{x-a}{b-a} \right). \label{eqn:up1d}
\end{alignat}
\end{subequations}
This eigenpair involves $\pi_p$ and $\sin_p$ which come from a generalized trigonometry. Edmunds \& Lang~\cite{EdmundsLang2015} discuss details about the generalized trigonometric functions. They define $F_p : [0, 1] \to \mathbb{R}$ by
\[F_p(x) := \int_0^x \frac{1}{(1-t^p)^{1/p}} dt,\]
for $1 < p < \infty$ which is a strictly increasing one-to-one function with range $[0, \pi_p/2]$. We generalize $\pi$ via
\[\frac{\pi_p}{2} =  F_p(1)  = \frac{\pi}{p \sin(\pi/p)},\]
and for $x \in [0, \pi_p/2]$, we define the generalized sine function 
\[\sin_p(x) := F_p^{-1}(x).\]
Note that for $p = 2$ we obtain $\sin_2(x) = \sin(x)$ and $\pi_2 = \pi$.

Since we have the exact eigenvalues, we can expand both $\lamp$ and $\lamp^{1/p}$ about $\epsilon = 1/p = 0$ to show that 
\begin{equation}
\label{eqn:lamp-expand}
    \lamp = \left(\frac{2}{b-a}\right)^p\left(p + \frac16 (\pi^2-6)+\frac{1}{72}\pi^2 (\pi^2-12) \frac1p + \mathcal{O}\left(\frac{1}{p^2}\right) \right),
\end{equation}
and 
\begin{equation}
\label{eqn:lamproot-expand}
  \lamp^{1/p} = \left(\frac{2}{b-a}\right) \left(1 +\frac{\log p}{p}+\frac16 \left( \pi^2-6+3(\log p)^2 \right) \frac{1}{p^2}+ \mathcal{O}\left(\frac{(\log p)^3}{p^3}\right) \right).
\end{equation}

Taking the $p \to \infty$ limit in~\eqref{eqn:lamproot-expand}, we find 
\begin{equation*}
    \lim_{p \to \infty} \left(\lamp\right)^{1/p} = \frac{2}{b-a},
\end{equation*}
and indeed $2/(b-a) = 1/R$ where $R$ is the radius of the largest $1$D ball lying within the domain $(a,b)$. Moreover, we find that $\lamp^{1/p}$ approaches $\LamInf$ at the rate $(\log p)/p$ for large $p$. Also, we see that when $b-a=2$, $\lamp$ grows linearly in $p$ for large $p$.

We obtain high–accuracy reference values of $\sin_{p}(x)$ by solving the initial–value problem
\begin{equation*}
\begin{aligned}
v' &= |w|^{\frac{1}{p-1}}\mathrm{sign}(w), &
v(0) &= 0,\\
w' &= -(p-1)|v|^{p-1}\mathrm{sign}(v), &
w(0) &= 1,
\end{aligned}
\end{equation*}
using the \texttt{MATLAB} code from~\cite{Vasinova2016} with the
\texttt{ode45} solver. Both absolute and relative tolerances are set to $10^{-16}$.

Figure~\ref{fig:1_sinp} shows plots of $\up(x)$ for $p = 2, 3, 4, 8$, along with the $p \to \infty$ limiting solution given by $\uInf(x) = 1-|x|$; note that the domain $(a,b)$ has the symmetry assumption discussed in Section~\ref{subsec:asymptotic} and hence, in the $p \to \infty$ limit $\up(x)$ converges to the normalized distance function given by~\eqref{eqn:uinf}. Additionally, we plot the pointwise differences $\up - \uInf$ between solutions, which shows how $\up(x)$ approaches the distance function as $p$ increases. Note that $\up$ lies above $\uInf$. 

\begin{remark} 
\label{rmk:up-reg}
For $p>2$, near its maximum $x_0 = (a+b)/2$, the leading behavior of the eigenfunction $u_p(x)$ is
\begin{equation*}
     \up(x)
     \sim 1-K_p \left|x-x_0\right|^{p/(p-1)},
\end{equation*}
where $K_p > 0$ is an explicitly defined constant -- see Appendix~\ref{app:sinp-regularity} for the derivation. Consequently, $\up$ belongs to the regularity class $C^{1,1/(p-1)}(\overline{\Omega})$, and in particular, $\up'$ is  H\"older continuous at $x_0$, with exponent $1/(p-1)$. Thus, $\up$ is not in $C^2$ for any $p>2$, and  the smoothness of $\up$ decreases as $p$ increases (as the exponent $1/(p-1)$ decreases).  
\end{remark}

\begin{figure}
    \centering
    \includegraphics[scale = 0.35]{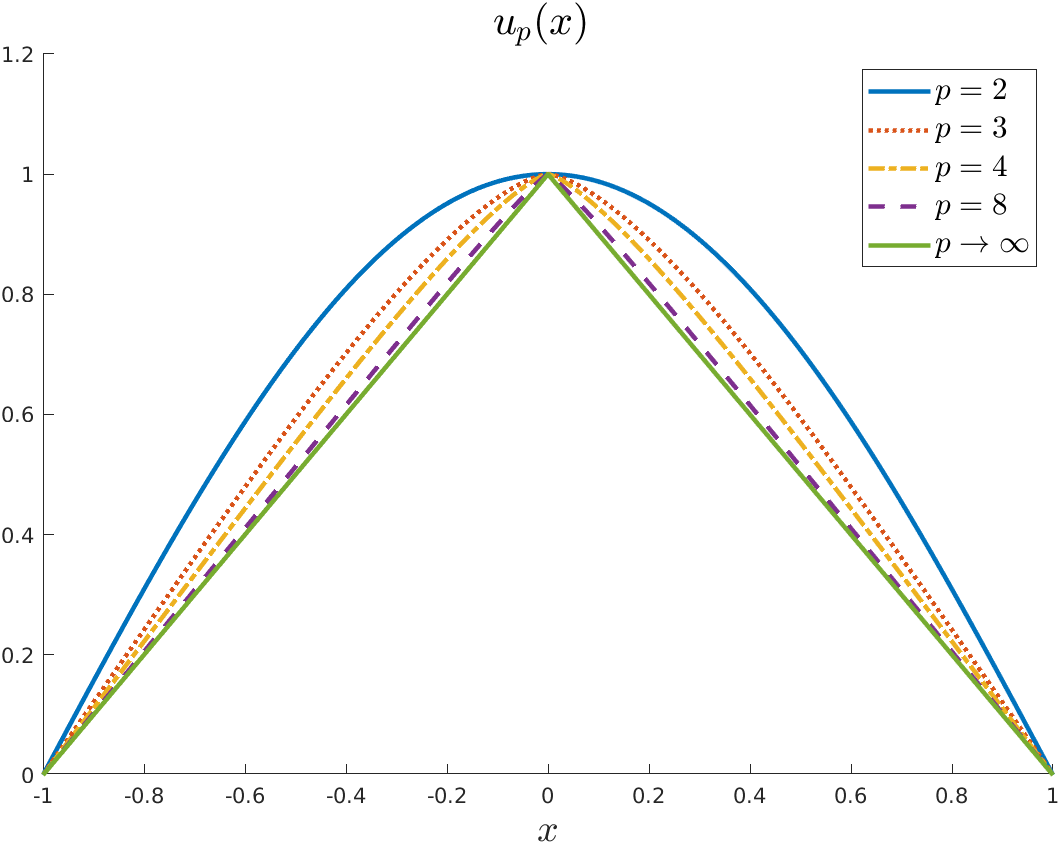}
    \hspace{1.5em}
    \includegraphics[scale = 0.35]{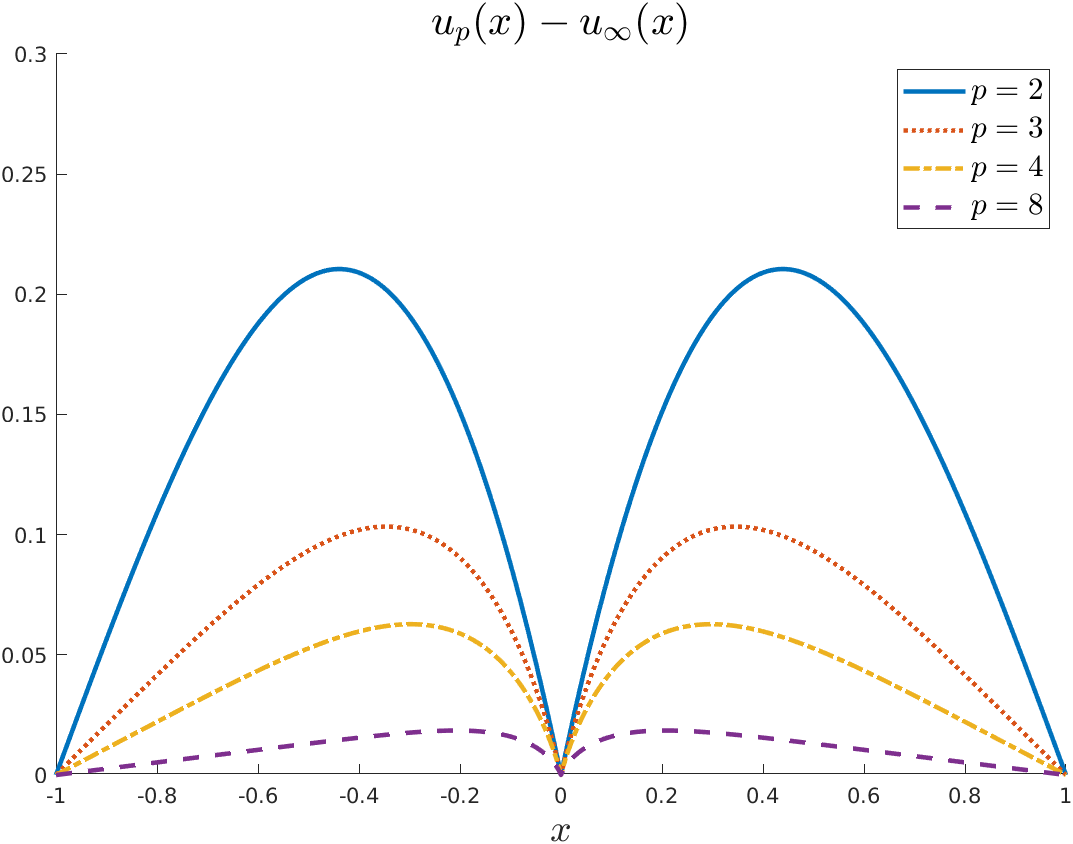}
    \caption{Left: $\up(x) = \sin_p(\pi_p(x+1)/2)$ converges to $\uInf(x) = 1-|x|$ as $p$ increases.  Right: pointwise errors between $\up(x)$ and $\uInf(x)$. }
    \label{fig:1_sinp}
\end{figure}

\section{Numerical scheme}
\label{sec:scheme}

The continuous domain $\Omega$ is replaced by a discrete computational domain which we call $\Omega_h$. We will use a finite element space
\begin{equation*}
   V_h = \{ \phi_h \in C(\Omega_h) : \phi_h|_T  \text{ is in } Q^1 \text{ for each } T \in \mathcal{T}_h\},
\end{equation*}
where $Q^1$ is the space of bilinear polynomials on each quadrilateral cell in $\mathcal{T}_h$, the collection of faces on the surface mesh. Here, the mesh size is defined as $h := \max_{T \in \mathcal{T}_h} \mathrm{diam}(T)$. Numerically approximated eigenvalues will be denoted by $\lampApprox$ and eigenfunctions by $\upApprox$. We use polynomial degree~$1$ Lagrange elements since $p$-Laplace eigenfunctions are not $C^2$ for $p > 2$. 

We note that the finite element method extends to surface domains. For the classical linear surface Poisson problem, $\Delta^S u = f$, Dziuk \& Elliott~\cite{DziukElliott2013} show that under the standard regularity assumption $ \| f-f_h\|_{L_2(\Omega)} \leq c_f h^2$, SFEM achieves  optimal convergence rates analogous with those for Euclidean domains: 
\begin{equation*}
    \| u-u_h\|_{L_2(\Omega)} \leq c h^2, \quad \| \gradS (u-u_h)\|_{L_2(\Omega)} \leq c h .
\end{equation*} 
Although these rates pertain to a linear problem which is only directly applicable when $p=2$, they show how the geometric and interpolation errors behave when extending to a surface framework. Dziuk \& Elliott~\cite{DziukElliott2013} further discuss how SFEM is applicable for nonlinear diffusion equations including the parabolic surface $p$-Laplace equation. Here, we consider the stationary $p$-Laplace eigenvalue problem. 

We use a Newton inverse-power iteration in order to solve the nonlinear problem  for $p \in [2, \infty)$, similarly to~\cite{Patraetal2019}. This means we alternate updating the eigenvalue and eigenfunction approximations. The eigenvalue approximation is obtained by computing the discrete nonlinear Rayleigh quotient~\eqref{eqn:Rayleigh}. For the eigenfunction updates, we seek solutions of 
\begin{equation*}    
 F(u; \: \lambda) := \gradS \cdot \left(\gamma(u; \eta) \gradS u\right) + \lambda |u|^{p-2} u = 0,
\end{equation*}
where the coefficient with small regularization constant $\eta > 0$ is given by
\begin{equation}
\label{eqn:gamma}
     \gamma(u; \eta) := (\eta^2+|\gradS u|^2)^{\frac{p-2}{2}} > 0.
\end{equation}
We solve for the eigenfunction update at Newton iteration $n+1$ (denoted by superscripts) via
\begin{align*}
    F'(u^n; \lambda)[\delta u^n] & = - F(u^n; \: \lambda) \label{NewtonIt1}, \\
    u^{n+1} & = u^n + \beta^n \delta u^n,
\end{align*}
where $F'(u; \lambda)[\delta u]$ is the derivative of $F$ in the direction of $\delta u$. Here, $\eta$ is a small regularization constant which ensures positive-definiteness of the generalized stiffness and mass matrices appearing in the resulting linearized problem, allowing a conjugate gradient (CG) solver to be used.
In our experiments we take $\eta = 10^{-5}$. The damping parameter, $0< \beta^n \leq 1$, is chosen at each iteration via a quadratic interpolation line search with a sufficient decrease condition~\cite{Nocedal2006}. 
That is, we find $0< \beta^n \leq 1$ such that
\begin{equation}
\label{eqn:line-search}
    R_p(u^n + \beta^n \delta u^n; \Omega_h) \leq R_p(u^n; \Omega_h) + c_1 \beta^n R_p'(u^n; \Omega_h) [\delta u^n], 
\end{equation}
where $c_1 = 10^{-3}$ is the prescribed tolerance, $R_p$ is given by~\eqref{eqn:Rayleigh}, and $R_p'(u) [\delta u]$ is the derivative of $R_p$ in the direction of $\delta u$. 

This gives the Newton iteration equations as 
\begin{subequations}
\begin{alignat}{2}
       \gradS \cdot \bigg( \gamma(u^n; \eta) \gradS \delta u^n+ (p-2)\gamma(u^n; \eta) & \frac{\gradS u^n \cdot \gradS \delta u^n}{\eta^2+|\gradS u^n|^2}  \gradS u^n\bigg) + \lambda^n (p-1)|u^n|^{p-2} \delta u^n
     \label{eqn:pLapNewtonIt1}  \\ &
     = - \gradS \cdot \bigg( \gamma(u^n; \eta) \gradS u^n\bigg) - \lambda^n |u^n |^{p-2} u^n  , \nonumber \\
     u^{n+1} &= u^n + \beta^n \delta u^n. 
    \end{alignat}   
\end{subequations}

The weak formulation of~\eqref{eqn:pLapNewtonIt1} is obtained by multiplying with a test function $\phi$ and integrating by parts on both sides. Reducing  to a finite dimensional space with basis $\{\phi_h^0, ..., \phi_h^{N-1} \} \in V_h$, we can write the update $\delta u^n \in H^1(\Omega)$, as 
\begin{equation*}
    \delta u^n = \sum_{j = 0}^{N-1} \delta U^n_j \phi_h^j.
\end{equation*}

Similarly to~\cite{Patraetal2019}, we find the resulting linear system for the update coefficients $\delta U^n$  vector at each iteration to be
\begin{equation}
    \left( K^n - \lambda^n M^n \right) \delta U^n = b^n,
    \label{eqn:linearSystem}
\end{equation}
with the entries of the generalized stiffness matrix $K^n$ given by
\begin{equation}
\label{eqn:K}
    K^n_{ij} :=\int_{\Omega_h} \gradS \phi_h^i \cdot  
     \gamma(u^n; \eta) \left ( I + \frac{(p-2)}{\eta^2 + |\gradS u^n|^2}   [\gradS u^n] \otimes [\gradS u^n] \right )\gradS \phi_h^j dA_h,
\end{equation}
the entries of the weighted mass matrix $M^n$ given by 
\begin{equation}
\label{eqn:M}
    \qquad M^n_{ij}: = (p-1) \int_{\Omega_h} |u^n|^{p-2} \phi_h^i \phi_h^j dA_h,
\end{equation}
and the entries of the right hand side $b^n$ given by
\begin{equation}
\label{eqn:b}
    b^n_{i} := \int_{\Omega_h} \left (\lambda^n |u^n|^{p-2} u^n \phi_h^i - \gradS \phi_h^i \cdot \left( \gamma(u^n; \eta) \gradS u^n \right) \right)  dA_h.
\end{equation}

In practice, following~\cite{Patraetal2019}, we instead solve the shifted  linear system 
$(K^n + \lambda^n M^n)\,\delta U^n = b^n$, rather than~\eqref{eqn:linearSystem}, 
as the latter is not necessarily positive definite.  
The shifted matrix is positive definite, since both $K^n$ and $M^n$ are 
positive definite and $\lambda^n>0$. Note that $M^n$ is positive definite since for the first eigenfunction we have $u^n >0$ under our normalization. This shift preserves the Newton update direction and therefore the convergence to the first eigenpair.

Our code is written in \texttt{C++} using the \texttt{deal.II}~\cite{dealII94} finite element library. In our implementation, the first Dirichlet $p$-Laplacian eigenpair is computed using a nonlinear inverse-power iteration~\cite{Bozorgnia2016}, in which the eigenfunction and eigenvalue are updated in an alternating fashion. The linear system which arises at each iteration becomes ill-conditioned when the gradient is small and when $p$ is large.  
We apply algebraic multigrid (AMG) preconditioning and use a CG solver at each iteration. The AMG preconditioner is implemented through the \texttt{TrilinosWrappers::PreconditionAMG} interface in \texttt{deal.II}. It is configured for symmetric elliptic operators by setting \texttt{elliptic = true} and \texttt{higher\_order\_elements = false}, with two smoother sweeps and an aggregation threshold of $0.02$. Symmetric successive over-relaxation (SSOR) preconditioning was also tested but proved less effective, typically requiring more than $100$ CG iterations even for moderate $p$ values. By comparison, AMG maintained a smaller ($< 50$) and more stable iteration count across different values of $p$ and mesh refinement.
We initialize with the linear $p = 2$ problem using the \texttt{SLEPc} Krylov-Schur eigensolver from the \texttt{deal.II} library. For $p > 2$, we need an initial guess very close to the exact solution in order to see convergence. Accordingly, we iterate in $p$ following Algorithm~\ref{alg:newton_inverse_power}. 

For the examples under our consideration, we take $\mathrm{tol}_\mathrm{Newton} = 10^{-7}$,  $\mathrm{tol}_\mathrm{CG} = 10^{-6}$, and $\delta p = 1$ unless the target $p$ value is not an integer or otherwise specified. We continue incrementing $p$ until the algorithm fails to converge usually due to failure to find a suitable descent direction according to the decrease condition~\eqref{eqn:line-search}. 

\begin{algorithm}
\caption{Newton inverse-power iteration in $p$}
\label{alg:newton_inverse_power}
\begin{algorithmic}[1] 
\Require Discrete domain $\Omega_h$, finite element space $V_h = \text{span}\{\phi_h^j\}_{j=0}^{N-1}$, 
initial eigenpair $(\lambda_{p_0,h}, u_{p_0,h})$, 
step size $\delta p$, target $p_{\max}$,  regularization constant $\eta$,
Newton tolerance $\mathrm{tol}_{\mathrm{Newton}}$, CG tolerance $\mathrm{tol}_{\mathrm{CG}}$, 
line search constant $c_1$.
\Statex

\State $i=0$.
\While{$p_i < p_{\max}$}
    \State $i \gets i+1$
    \State $p_i \gets \min\{p_{i-1}+\delta p, p_{\max}\}$ 
    \State Initialize $\lambda^0 \gets \lambda_{p_{i-1},h}$, $u^0 \gets u_{p_{i-1},h}$
    \For{$n = 0,1,2,\dots$ until convergence}
        \State Compute regularized coefficient $\gamma(u^n; \eta)$ via~\eqref{eqn:gamma}.
        \State Assemble $K^n$, $M^n$ and $b^n$ in the basis of $V_h$ via ~\eqref{eqn:K},~\eqref{eqn:M}, and~\eqref{eqn:b}.
        \State Solve $(K^n + \lambda^n M^n) \delta U^n = b^n$ using CG with AMG preconditioning, stopping when \[\|(K^n + \lambda^n M^n) \delta U^n - b^n\|_2/ \|b^n \|_2 \le \mathrm{tol}_{\mathrm{CG}}.\]
        \State Form update $\delta u^n = \sum_{j=0}^{N-1} \delta U^n_j \phi_h^j \in V_h$.
        \State Perform line search: find $0 < \beta^n \le 1$ such that 
        \[R_{p_i}(u^n + \beta^n \delta u^n; \Omega_h) \le R_{p_i}(u^n; \Omega_h) + c_1 \beta^n R_{p_i}'(u^n; \Omega_h)[\delta u^n].\]
        \State Update $u^{n+1} \gets u^n + \beta^n \delta u^n$.
        \State Normalize $u^{n+1} \gets u^{n+1} / \|u^{n+1}\|_\infty$.
        \State Update $\lambda^{n+1} \gets R_{p_i}(u^{n+1}; \Omega_h)$.
        \If{$\|u^{n+1}-u^n\|_2 / \|u^{n+1}\|_2 \le \mathrm{tol}_{\mathrm{Newton}}$}
            \State Converged: set $u_{p_i,h} \gets u^{n+1}$, $\lambda_{p_i,h} \gets \lambda^{n+1}$ and \textbf{break}.
        \EndIf
    \EndFor
\EndWhile
\State \textbf{Return:} $\{(\lambda_{p_i,h},u_{p_i,h})\}$ for all $p_i$.
\end{algorithmic}
\end{algorithm}

\subsection{Handling large $p$ values}
\label{subsec:largep}

Iterating in $p$ when applying the Newton scheme is one measure we take in order to handle larger $p$ values.  Additionally, we rescale the domain in order to prevent numerical blowup or underflow problems. From~\eqref{eqn:lamLimit} we see that $\lamp \sim (\max_{\bx  \in \Omega} \: \distx)^{-p}$ for large $p$ so if $\max_{\bx  \in \Omega} \: \distx$ is not close to $1$ then $\lamp$ may become very large or small as $p$ increases.

We apply the transformation
\begin{equation}
    T_\alpha : \Omega \to  \alpha \Omega := \{ \alpha \bx  \mid x =  (x_1, \cdots, x_n) \in \Omega \subset \mathbb{R}^n \},
    \label{eqn:transformation}
\end{equation}
with dilation parameter $\alpha$ chosen such that $\underset{\bx  \in \Omega}{\max} \;  \distx$ is close to $1$. Figure~\ref{fig:2_rescale} visualizes this transformation on a rectangular domain. Applying transformation~\eqref{eqn:transformation} impacts the eigenvalues. In particular, 
\begin{equation}
    R_p(\up; \alpha \Omega) = \alpha^{-p} R_p(\up; \Omega),
    \label{eqn:scaling}
\end{equation}
where $R_p$ is given by~\eqref{eqn:Rayleigh}. This can be seen by applying a change of variables. Thus, we can readily recover the eigenvalues on the original domain $\Omega$.

\begin{figure}
    \centering
    \includegraphics[scale = 0.4]{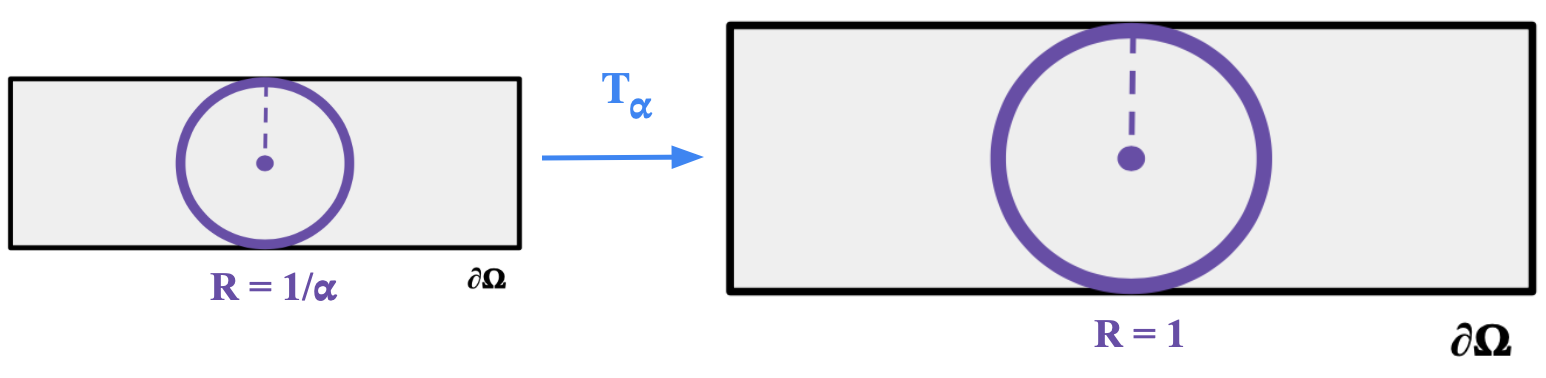}
    \caption{Illustration of the rescaling transformation 
    $T_\alpha$. On the left, the original 
    rectangular domain contains a largest inscribed circle of radius 
    $R = 1/\alpha$. After applying $T_\alpha$, the domain is scaled 
    so that the maximal distance to the boundary becomes $R = 1$.}
    \label{fig:2_rescale}
\end{figure}

For general domains and in particular for surfaces, choosing a suitable $\alpha$ value is not trivial. We take $\alpha = d_m^{-1}$ where $d_m$ is an approximation of $\max_{\bx  \in \Omega} \: \distx$. This can be found via a handful of alternating direction method of multipliers (ADMM) iterations for the $p$-Poisson problem~\cite{Potgieteretal2025}. This helps because the $\lamp^{1/p}$ values then approach a value close to $1$. Figure~\ref{fig:3_geodesic} shows an example on a hand mesh where the maximal geodesic distance $d_m$ is obtained. This approximation of $\max_{\bx  \in \Omega} \distx$ 
is then used to set the scaling parameter $\alpha = d_m^{-1}$.

\begin{figure}
    \centering
        \includegraphics[scale = 0.4]{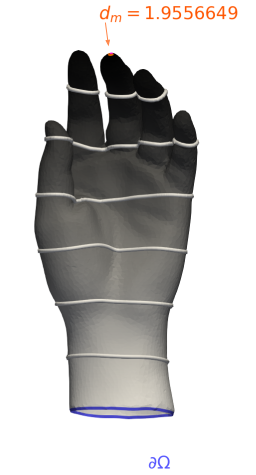}
    \caption{Illustration of the maximal distance approximation $d_m$ on a hand surface mesh. 
The marked point (orange) corresponds to the location where the distance to the boundary 
$\partial \Omega$ (blue) is maximized, giving $d_m \approx 1.956$. 
This value is used in the scaling $\alpha = d_m^{-1}$.}
    \label{fig:3_geodesic}
\end{figure}

In practice, we may employ an adaptive thresholding strategy to keep the computed eigenvalues within a stable range. Let $\tau_{-} > 0$ and $\tau_{+} > 0$  with $\tau_{-} < \tau_{+}$ denote prescribed lower and upper thresholds, respectively. Typically, we use $\tau_{-} = 2$ and $ \tau_{+} = 20$ in our computations.  At each step of the continuation in $p$, once an approximation of $\lampApprox$ is obtained, we check:
 \begin{align*}
    & \text{if } \lampApprox > \tau_{+}, \quad \text{then }   \alpha \;\gets\; \alpha \cdot 2^{1/p},\\
    & \text{if } \lampApprox < \tau_{-} , \quad \text{then }  \alpha \;\gets\; \alpha \cdot 2^{-1/p}.
 \end{align*}
Here $\alpha$ denotes the cumulative scaling factor applied to the domain, so the current working domain is always $\alpha \Omega$. When $\lampApprox$ is outside the range $[\tau_{-}, \tau_{+}]$, $\lampApprox$ is shifted by a factor of two back into range.  
This adaptive strategy is particularly useful when the maximal distance to the boundary
is not known a priori. For planar domains $\distx$ can often be determined explicitly or computed with reasonable effort ~\cite{Crane2020}. However, for surface domains the maximum geodesic distance is generally more difficult to obtain. In such cases, adaptive thresholding avoids the need for exact geometric information while still preventing blowup or underflow in the eigenvalues. The modification of the $p$ continuation Algorithm~\ref{alg:newton_inverse_power} with adaptive thresholding is presented in Algorithm~\ref{alg:newton_inverse_power_rescale}.

\begin{algorithm}
\caption{Newton inverse-power iteration in $p$ with dynamic rescaling}
\label{alg:newton_inverse_power_rescale}
\begin{algorithmic}[1]
\Require Discrete domain $\Omega_h$, finite element space $V_h = \text{span}\{\phi_h^j\}_{j=0}^{N-1}$, 
initial eigenpair $(\lambda_{p_0,h}, u_{p_0,h})$, 
step size $\delta p$, target $p_{\max}$,  regularization constant $\eta$,
Newton tolerance $\mathrm{tol}_{\mathrm{Newton}}$, CG tolerance $\mathrm{tol}_{\mathrm{CG}}$, 
line search constant $c_1$, eigenvalue scaling tolerances $\tau_-$ and $\tau_+$.
\Statex

\State $i=0,  \alpha_0 = 1$.
\While{$p_i < p_{\max}$}
    \State $i \gets i+1$
    \State $p_i \gets \min\{p_{i-1}+\delta p, p_{\max}\}$ 
    \State Initialize $\lambda^0 \gets \lambda_{p_{i-1},h}$, $u^0 \gets u_{p_{i-1},h}$, $\alpha_i \gets \alpha_{i-1}$
 \If{$\lambda^0 > \tau_+$}  
    \State $\alpha_i \gets \alpha_{i-1} \cdot 2^{1/p_i}$
\State Rescale domain: $(\Omega_h)_i \gets (\alpha_i / \alpha_{i-1}) (\Omega_h)_{i-1}$
\State Rescale eigenfunction: $u^0(x) \gets u^0(\alpha_{i-1} x / \alpha_{i})$
    \State Rescale eigenvalue: $\lambda^0 \gets \lambda^0 / 2$
\EndIf
\If{$\lambda^0 < \tau_-$}
    \State $\alpha_i \gets \alpha_{i-1} \cdot 2^{-1/p_i}$
\State Rescale domain: $(\Omega_h)_i \gets (\alpha_i / \alpha_{i-1}) (\Omega_h)_{i-1}$
\State Rescale eigenfunction: $u^0(x) \gets u^0(\alpha_{i-1} x / \alpha_{i})$
    \State Rescale eigenvalue: $\lambda^0 \gets 2 \lambda^0$
\EndIf
    \For{$n = 0,1,2,\dots$ until convergence}
    \State Compute regularized coefficient $\gamma(u^n; \eta)$ via~\eqref{eqn:gamma}.
        \State Assemble $K^n$, $M^n$ and $b^n$ in the basis of $V_h$ via ~\eqref{eqn:K},~\eqref{eqn:M}, and~\eqref{eqn:b}.
        \State  Solve $(K^n + \lambda^n M^n) \delta U^n = b^n$ using CG with AMG preconditioning, stopping when \[\|(K^n + \lambda^n M^n) \delta U^n - b^n\|_2/ \|b^n \|_2 \le \mathrm{tol}_{\mathrm{CG}}.\]
        \State Form update $\delta u^n = \sum_{j=0}^{N-1} \delta U^n_j \phi_h^j \in V_h$.
        \State Perform line search: find $0 < \beta^n \le 1$ such that 
        \[R_{p_i}(u^n + \beta^n \delta u^n; (\Omega_h)_i) \le R_{p_i}(u^n; (\Omega_h)_i) + c_1 \beta^n R_{p_i}'(u^n; (\Omega_h)_i)[\delta u^n].\]
        \State Update $u^{n+1} \gets u^n + \beta^n \delta u^n$.
        \State Normalize $u^{n+1} \gets u^{n+1} / \|u^{n+1}\|_\infty$.
        \State Update $\lambda^{n+1} \gets R_{p_i}(u^{n+1}; (\Omega_h)_i)$.
        \If{$\|u^{n+1}-u^n\|_2 / \|u^{n+1}\|_2 \le \mathrm{tol}_{\mathrm{Newton}}$}
            \State Converged: set $u_{p_i,h} \gets u^{n+1}$, $\lambda_{p_i,h} \gets \lambda^{n+1}$ and \textbf{break}.
        \EndIf
    \EndFor
\EndWhile
\State \textbf{Return:}
$\{ (\hat \lambda_{p_i,h}, \hat u_{p_i,h}) \}$ for all $p_i$, where $\hat \lambda_{p_i,h} = \alpha_i^{p_i}\lambda_{p_i,h} $, $\hat u_{p_i,h}(x) = u_{p_i,h}(\alpha_i x)$ on the original domain $\Omega_h$.
\end{algorithmic}
\end{algorithm}

\section{Computational results}
\label{sec:results}

This section presents a series of numerical experiments that illustrate the accuracy and convergence properties of our SFEM Newton inverse-power scheme for computing the first $p$–Laplace eigenpair.  
We begin with the one–dimensional interval, which is the only case where an analytical solution is known. This allows a clean verification of the predicted convergence rates, which to our knowledge have not previously been reported in the literature. We then move to planar Euclidean domains (disks and squares) and explore the limiting behavior as $p \to \infty$. Finally, we report computations on some two–dimensional surfaces embedded in $\mathbb{R}^3$ to demonstrate that the method extends
naturally to curved manifolds. These surface experiments include both simple surfaces (hemisphere and half torus) and more complicated geometries from Crane's~\cite{craneModelRepo} 3D Model Repository and the Chen et al.~\cite{offMESHES} benchmark set for segmentation. All downloaded meshes are converted to quadrilateral meshes using the open‐source tool \texttt{tethex}~\cite{tethex}
and in the case of the hand model the raw mesh has been preprocessed in \texttt{MeshLab}~\cite{meshlab} by truncating at the wrist and applying smoothing to obtain a natural boundary. 

All experiments shown are carried out with the \texttt{deal.II}~\cite{dealII94} finite element library in \texttt{C++}.  We use
piecewise bilinear elements for the spatial discretization and the computational quadrilateral meshes are either generated using the built-in \texttt{GridGenerator} utilities or read in from a surface mesh file. We use quadrilateral surface meshes as they provide comparable geometric fidelity to triangle surface meshes with fewer elements, and are natively supported in the \texttt{deal.II} framework. For visualization of the eigenfunctions, we export to \texttt{ParaView}~\cite{paraview}.

\subsection{One–dimensional case $\Omega = \{ x \in \mathbb{R} : a < x < b \}$}
\label{subsec:1d-numerics}

We first consider the interval domain, where the first eigenpair is known exactly in terms of $\sin_p$ and $\pi_p$ as discussed in Section~\ref{subsec:1d}. An optimally scaled choice is $a = -1$, $b = 1$ so that the maximum distance to the boundary equals~$1$. Accordingly, we fix $a = -1$ and $b = 1$ in our computations with Algorithm~\ref{alg:newton_inverse_power}, using the high–accuracy reference solution generated as described in Section~\ref{subsec:1d}. This makes for a good benchmark convergence check of the numerical scheme. We successively refine a uniform $1$D mesh and record both the $L^2$ error of the eigenfunction and the relative error of the eigenvalue. 

Table~\ref{tab:1_sinp_conv} reports the results for a few representative values of $p$.
The eigenvalue relative errors converge at roughly second order for small $p$, but the rate drops slightly as $p$ grows, reflecting the decreasing regularity of the eigenfunctions, as discussed in Section~\ref{subsec:1d} (see Remark~\ref{rmk:up-reg}).
The eigenfunction error is above first order when $p$ is small and gradually approaches approximately linear grid convergence as 
$p$ increases. For large $p$, coarse meshes show irregular or overly optimistic rates; only after the mesh is refined past a certain resolution do the convergence rates settle. 

\begin{table}[] 
    \centering
    \begin{subtable}{1\textwidth}
    \centering  
    \begin{tabular}{ccccc}
    \multicolumn{3}{l}{$p=3$} &  \multicolumn{2}{r}{$\lambda_3 = 3.5360952$} \\
    \toprule
     \# cells &  $\upApprox$ $L^2$  error  & Rate & $\lampApprox$ relative error & Rate \\
     \midrule
    $64$ & $   4.4979e-04$ & \phantom{$-$}- & $3.2482e-04$ & \phantom{$-$}-\\
    $128$ & $  1.5426e-04$ & \phantom{$-$}$1.54$ & $8.3566e-05$ & \phantom{$-$}$1.96$\\
     $256$  & $  5.3145e-05$ & \phantom{$-$}$1.54$ & $2.1308e-05$ & \phantom{$-$}$1.97$\\
     $512$ & $  1.8094e-05$ & \phantom{$-$}$1.55$ & $5.4004e-06$ & \phantom{$-$}$1.98$\\
     $1024$ & $   5.7784e-06$ & \phantom{$-$}$1.65$ & $1.3631e-06$ & \phantom{$-$}$1.99$\\
     $2048$ & $   1.7233e-06$ & \phantom{$-$}$1.75$ & $3.4307e-07$ & \phantom{$-$}$1.99$\\
     \bottomrule 
\end{tabular}
    \end{subtable}\\
    \vspace{0.5em}
    \begin{subtable}{1\textwidth}
     \centering
     \begin{tabular}{ccccc}
         \multicolumn{3}{l}{$p=10$} &  \multicolumn{2}{r}{$\lambda_{10} = 10.6149413$} \\
    \toprule
     \# cells & $\upApprox$ $L^2$  error  & Rate & $\lampApprox$ relative error & Rate \\
     \midrule
    $64$ & $   4.5011e-04$ & \phantom{$-$}- & $1.0095e-03$ & \phantom{$-$}- \\ 
    $128$ & $   2.1537e-04$ & \phantom{$-$}$1.06$ & $3.0407e-04$ & \phantom{$-$}$1.73$\\
     $256$  & $   1.0168e-04$ & \phantom{$-$}$1.08$ & $8.7949e-05$ & \phantom{$-$}$1.79$\\
     $512$ & $   4.7525e-05$ & \phantom{$-$}$1.10$ & $2.4745e-05$ & \phantom{$-$}$1.83$\\
     $1024$ & $   2.1981e-05$ & \phantom{$-$}$1.11$ & $6.8241e-06$ & \phantom{$-$}$1.86$\\
     $2048$ & $   1.0219e-05$ & \phantom{$-$}$1.11$ & $1.8536e-06$ & \phantom{$-$}$1.88$\\
     \bottomrule  
\end{tabular}
    \end{subtable} \\
    \vspace{0.5em}
\begin{subtable}{1\textwidth}
    \centering
    \begin{tabular}{ccccc}
     \multicolumn{3}{l}{$p=50$} &  \multicolumn{2}{r}{$\lambda_{50} = 50.6390648$} \\
     \toprule
     \# cells & $\upApprox$ $L^2$  error  & Rate & $\lampApprox$ relative error & Rate \\
     \midrule
    $64$ & $   7.6419e-05$ & \phantom{$-$}- & $2.0586e-03$ & \phantom{$-$}-\\
    $128$ & $  4.6028e-05$ & \phantom{$-$}$0.73$ & $8.2748e-04$ & \phantom{$-$}$1.31$\\
     $256$  & $   2.5431e-05$ & \phantom{$-$}$0.86$ & $2.8879e-04$ & \phantom{$-$}$1.52$\\ 
     $512$ & $   1.3394e-05$ & \phantom{$-$}$0.93$ & $9.2792e-05$ & \phantom{$-$}$1.64$\\  
     $1024$ & $  6.8419e-06$ & \phantom{$-$}$0.97$ & $2.8315e-05$ & \phantom{$-$}$1.71$\\
     $2048$ & $  3.4617e-06$ & \phantom{$-$}$0.98$ & $8.3443e-06$ & \phantom{$-$}$1.76$\\
     \bottomrule 
\end{tabular}
    \end{subtable}\\
    \vspace{0.5em}
    \begin{subtable}{1\textwidth}
     \centering
     \begin{tabular}{ccccc}
          \multicolumn{3}{l}{$p=100$} &  \multicolumn{2}{r}{$\lambda_{100} = 100.642007$} \\
     \toprule
     \# cells & $\upApprox$ $L^2$  error  & Rate & $\lampApprox$ relative error & Rate \\
     \midrule
    $64$ & $   2.3932e-05$ & - & $2.2660e-03$ & \phantom{$-$}-\\
    $128$ & $   1.8642e-05$ & \phantom{$-$}$0.36$ & $1.0163e-03$ & \phantom{$-$}$1.16$\\
     $256$  & $   1.1496e-05$ & \phantom{$-$}$0.70$ & $4.1006e-04$ & \phantom{$-$}$1.31$\\
     $512$ & $  6.4386e-06$ & \phantom{$-$}$0.84$ & $1.4404e-04$ & \phantom{$-$}$1.51$\\   
     $1024$ & $   3.4202e-06$ & \phantom{$-$}$0.91$ & $4.6556e-05$ & \phantom{$-$}$1.63$\\
     $2048$ & $ 1.7749e-06$ & \phantom{$-$}$0.95$ & $1.4281e-05$ & \phantom{$-$}$1.70$\\
     \bottomrule  
\end{tabular}
    \end{subtable}
    \caption{Numerical convergence study of the computed $1$D eigenpairs toward the exact solutions given by~\eqref{eqn:1dexact}. 
   Both the $L^2$ error of the eigenfunction and the relative error of the eigenvalue are reported for various $p$ values. The $1$D mesh is uniform and successively refined across experiments. In each table, the true eigenvalue given by~\eqref{eqn:lamp1d} is reported in single precision.}
    \label{tab:1_sinp_conv}
\end{table}

Figure~\ref{fig:4_eig_many} shows the computed 1D eigenvalues (dashed green line), which exhibit linear growth in $p$ for large $p$, consistent with the asymptotic expansion~\eqref{eqn:lamp-expand}. The corresponding $p$th roots, shown on the right, numerically converge to the limiting value $\LamInf = 1$, approaching this limit at the asymptotic rate $(\log p)/p$ predicted by~\eqref{eqn:lamproot-expand}. These numerical results agree with the expected large $p$ behavior.

\begin{figure}
    \centering
    \includegraphics[scale = 0.52]{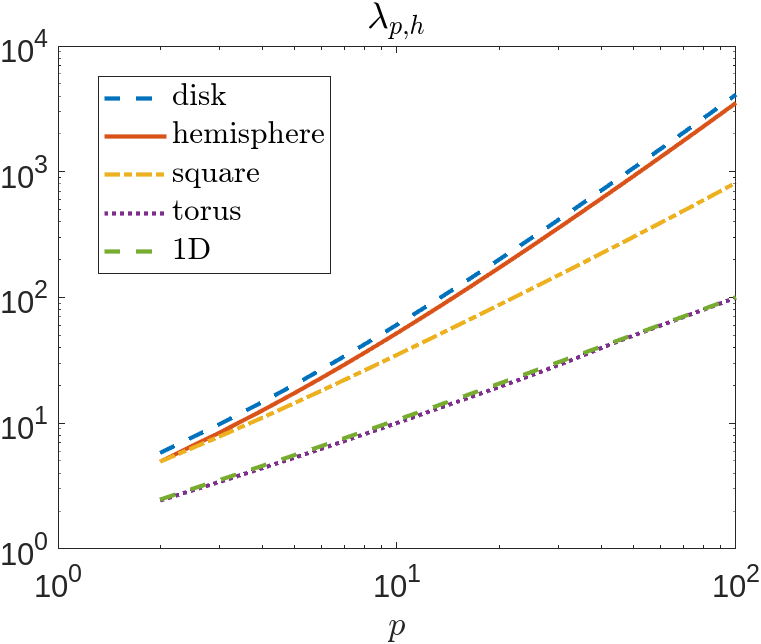}
    \hspace{1.5em}
         \includegraphics[scale = 0.52]{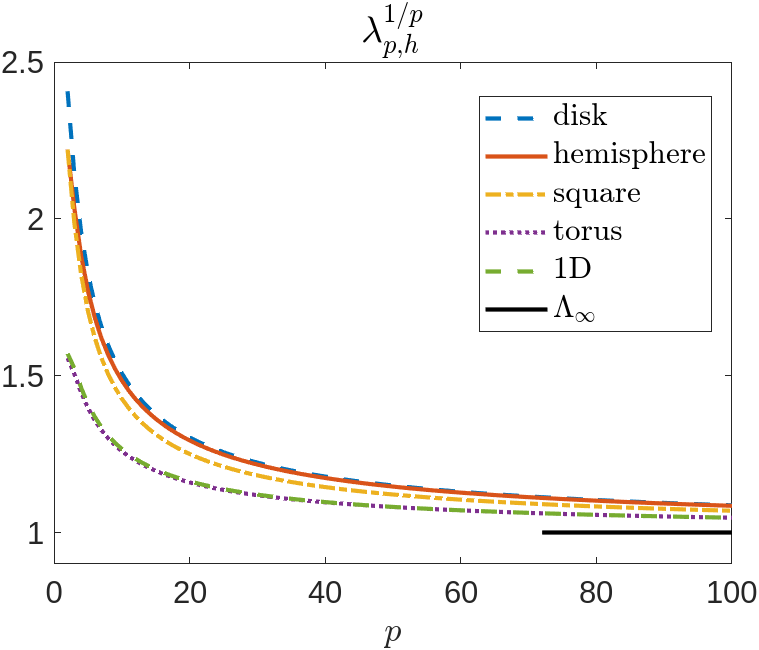}
    \caption{ Left: Eigenvalue approximations on some optimally scaled domains for $ 2 \leq p \leq 100$ shown in log-log scale. Right: Corresponding $p$th roots of eigenvalue approximations and limit $\LamInf$. The disk and hemisphere meshes consist of $327{,}680$ cells, the square mesh consists of $262{,}144$ cells, the half torus mesh consists of $1{,}572{,}864$ cells, and the $1$D mesh consists of $2{,}048$ cells. }
    \label{fig:4_eig_many}
\end{figure}

\subsection{Euclidean planar domains}
\label{subsec:planar}

We consider two simple domains, the disk and the square. For the disk, the condition $\mathcal{R} = \mathcal{M}$ (introduced in Section~\ref{subsec:asymptotic}) is satisfied, so the first eigenfunction converges to the distance-to-boundary function as $p \to \infty$.
However, the square does not satisfy $\mathcal{R} = \mathcal{M}$, and therefore the limiting eigenfunction as $p \to \infty$ is not known. We use Algorithm~\ref{alg:newton_inverse_power} for numerical computations on the disk and square since the maximum distance to the boundary is known. 

\subsubsection{Disk $\Omega = \{ (x, y) \in \mathbb{R}^2 : \sqrt{x^2+y^2} < R \}$ }

In our computations we consider the optimally scaled choice of the unit disk ($R=1$), so that the maximum distance to the boundary is $1$. However, eigenpairs may be recovered for different choices of $R$.  

Figure~\ref{fig:4_eig_many} shows that the computed eigenvalues on the unit disk 
increase the fastest among the examples shown in the plot. At $p=100$, the disk eigenvalue reaches approximately $4000$, whereas the corresponding $1$D eigenvalue is only around $100$. 

Figure~\ref{fig:5_disc} shows contour plots of the first eigenfunction $\upApprox$ on the disk for several values of $p$, ranging from $p=2$ to the limiting $p \to \infty$ case. As $p$ increases, the contour lines become more evenly spaced  and we approach the distance-to-boundary function, which is known to be the limiting $p\to\infty$ solution in this case, as discussed in Section~\ref{subsec:asymptotic} (see also Appendix~\ref{app:surface-limits}, in particular Remark~\ref{rmk:uinf-dist}).

Cross sections along the $y$-axis (Figure~\ref{fig:6_discCross}) further illustrate this convergence, with differences $\upApprox - \uInf$ showing the approach toward the limiting profile. Contrary to the one-dimensional case, where $\upApprox$ is always above the distance function for finite $p$, here on the disk this is not always true. For instance, the profiles for $p=5$ and $p=10$ show regions where $\upApprox$ falls slightly below $\uInf$.

Table~\ref{tab:2_disc_self} reports self-convergence results for both eigenfunctions and eigenvalues on successively refined meshes. Additionally, Table~\ref{tab:2_disc_self} includes the eigenvalue approximations $\lampApprox$. Results are shown for a few select $p$ values of $3, 10, 50,$ and $100$ to highlight the behavior for small, moderate, and large $p$. For small $p$, the eigenfunction numerically converges solidly above first order, while the eigenvalue exhibits approximately second-order convergence. As $p$ increases, the eigenfunction develops nonsmoothness near the origin, resulting in irregular convergence on coarse meshes. The rate gradually stabilizes above first order once the mesh is sufficiently refined. Similarly, the eigenvalue convergence rate decreases somewhat from second order for large $p$, reflecting the increased nonsmoothness in the corresponding eigenfunctions. This is consistent with our $1$D convergence test against the known analytical solution. Our computations show good agreement with the results reported by Horák~\cite{Horak2011} on the unit disk when using a comparable number of degrees of freedom, noting that Horák provides data only up to $p=10$, so the comparison is limited to that range. Note that results on different disk sizes will yield different eigenvalues according to the scaling relation~\eqref{eqn:scaling}.

Figure~\ref{fig:7_rescale_alph} highlights the importance of our domain scaling technique. The log-log plot shows how the first eigenvalue $\lampApprox = R_p(\upApprox; \alpha \Omega_h)$ varies with the scaling factor $\alpha$. For intermediate to large $p$ ($40 \leq p \leq 100$), the inset emphasizes the differences between scalings $\alpha = 0.9, 1, 1.1$. At the (seemingly innocent) extreme, $\alpha = 0.5$ (corresponding to a disk of radius $0.5$), eigenvalues exceed $10^{30}$
posing a numerical problem. Conversely, $\alpha = 2$ (corresponding to a disk of radius $2$) yields eigenvalues that are excessively small (around $10^{-20}$) for large $p$; such values are below machine epsilon, posing a numerical issue.
These observations illustrate that without proper scaling, even slight deviations from optimally scaled domains can lead to numerical blowup or underflow difficulties. 

\begin{figure}
    \centering
     \begin{subfigure}{0.18\textwidth} 
    \centering
         \includegraphics[scale = 0.09]{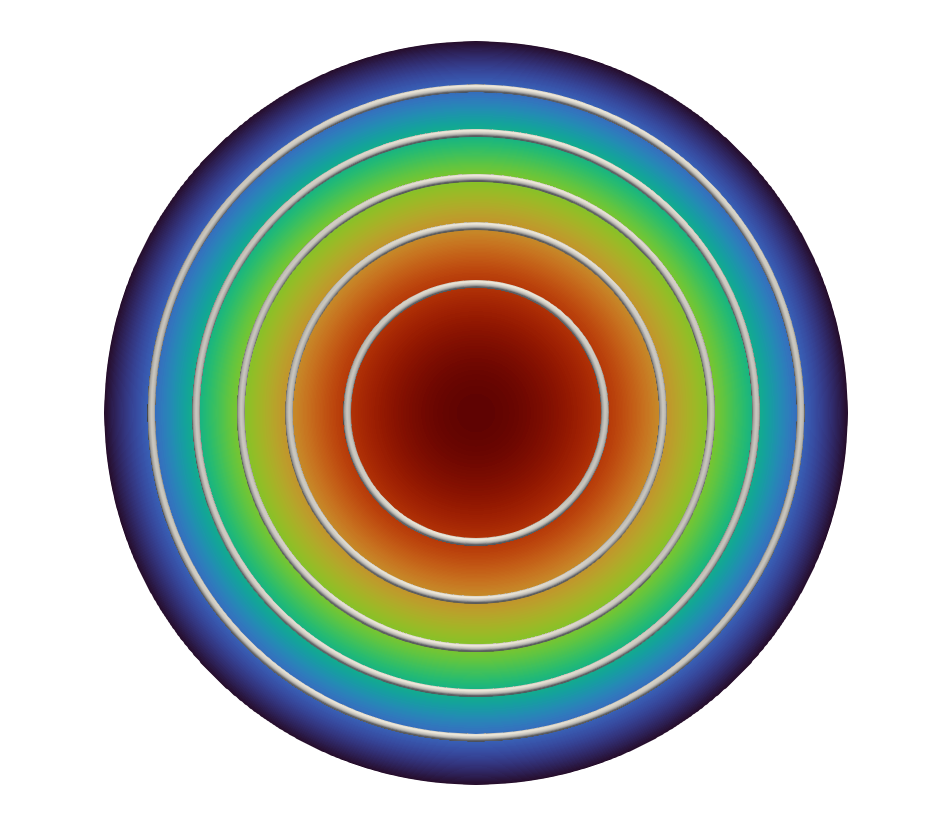}
         \subcaption{$p = 2$}
    \end{subfigure}
    \begin{subfigure}{0.18\textwidth} 
    \centering
         \includegraphics[scale = 0.09]{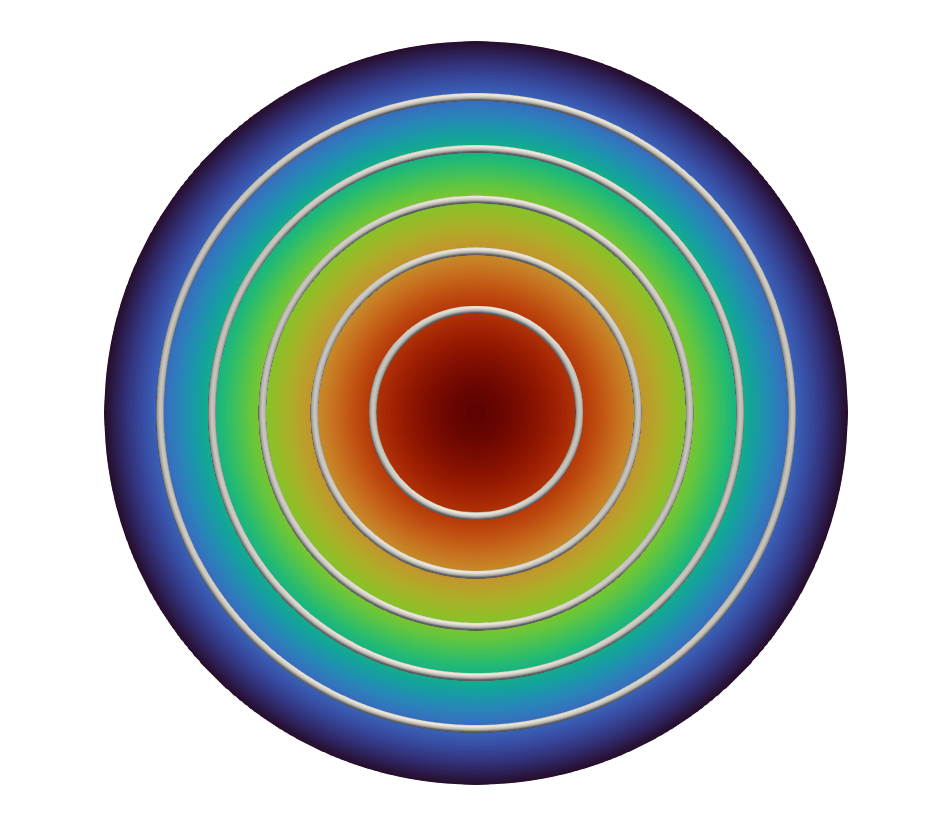}
         \subcaption{$p = 2.5$}
    \end{subfigure}
    \begin{subfigure}{0.18\textwidth}
    \centering
         \includegraphics[scale = 0.09]{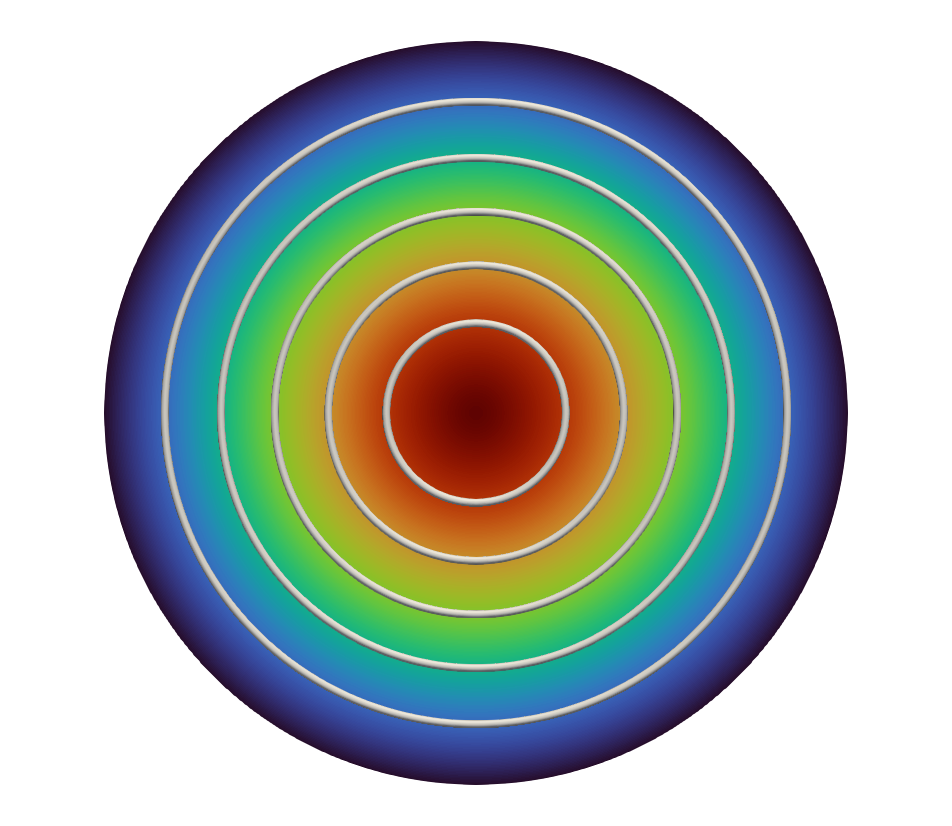}
          \subcaption{$p = 3$}
    \end{subfigure}     
    \begin{subfigure}{0.18\textwidth}
     \centering
         \includegraphics[scale = 0.09]{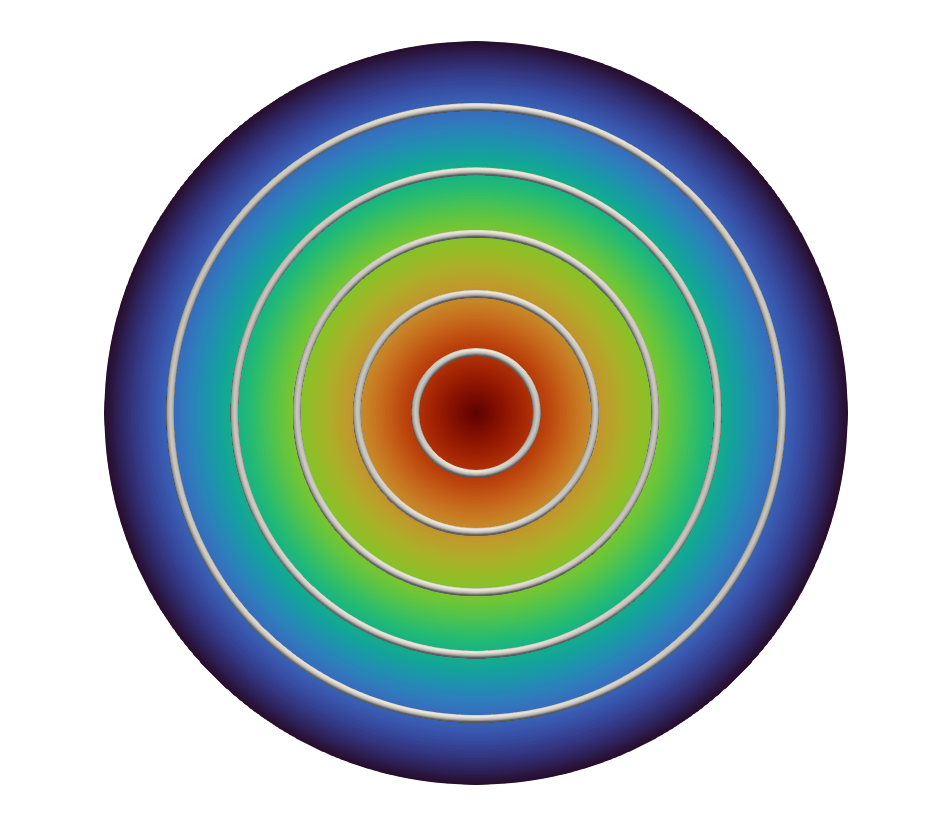}
          \subcaption{$p = 10$}
    \end{subfigure}
    \begin{subfigure}{0.18\textwidth}
     \centering
         \includegraphics[scale = 0.09]{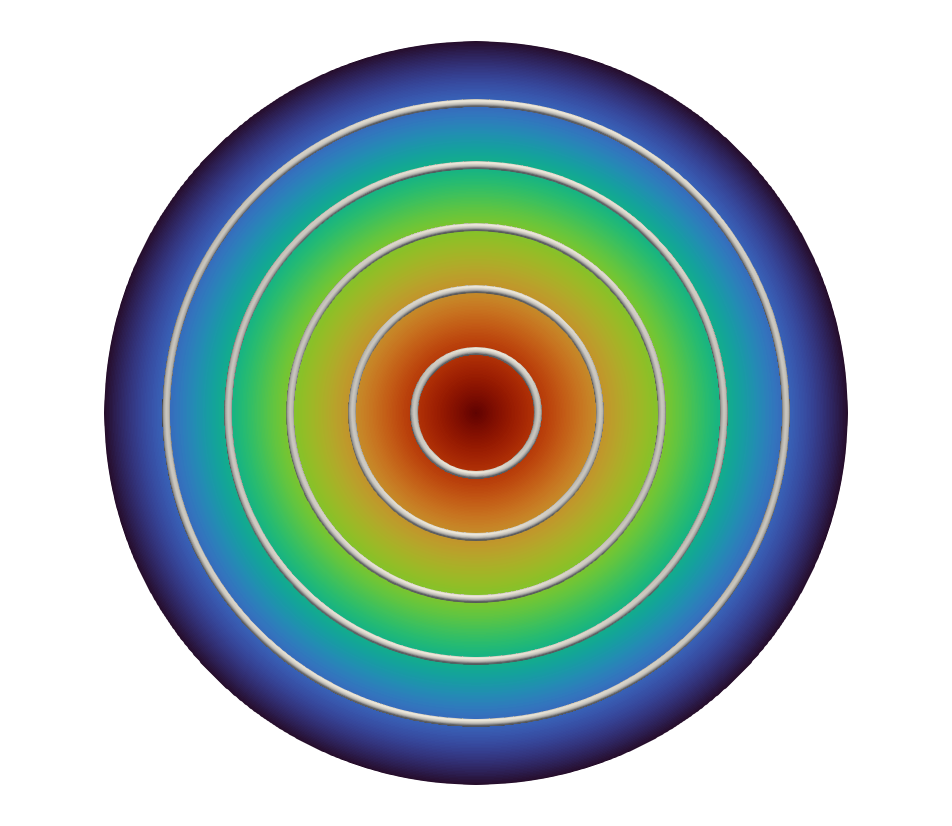}
          \subcaption{$p \to \infty$}
    \end{subfigure}
    \caption{Contour plots of the first eigenfunction $\upApprox$ on the disk. Plots (a)–(d) correspond to different values of $p$, and plot (e) shows the limiting solution as $p \to \infty$. All computations use a mesh with $327{,}680$ cells. Isolines are shown at the same levels across plots.}
    \label{fig:5_disc}
\end{figure}

\begin{figure}
    \centering
    \includegraphics[scale = 0.35]{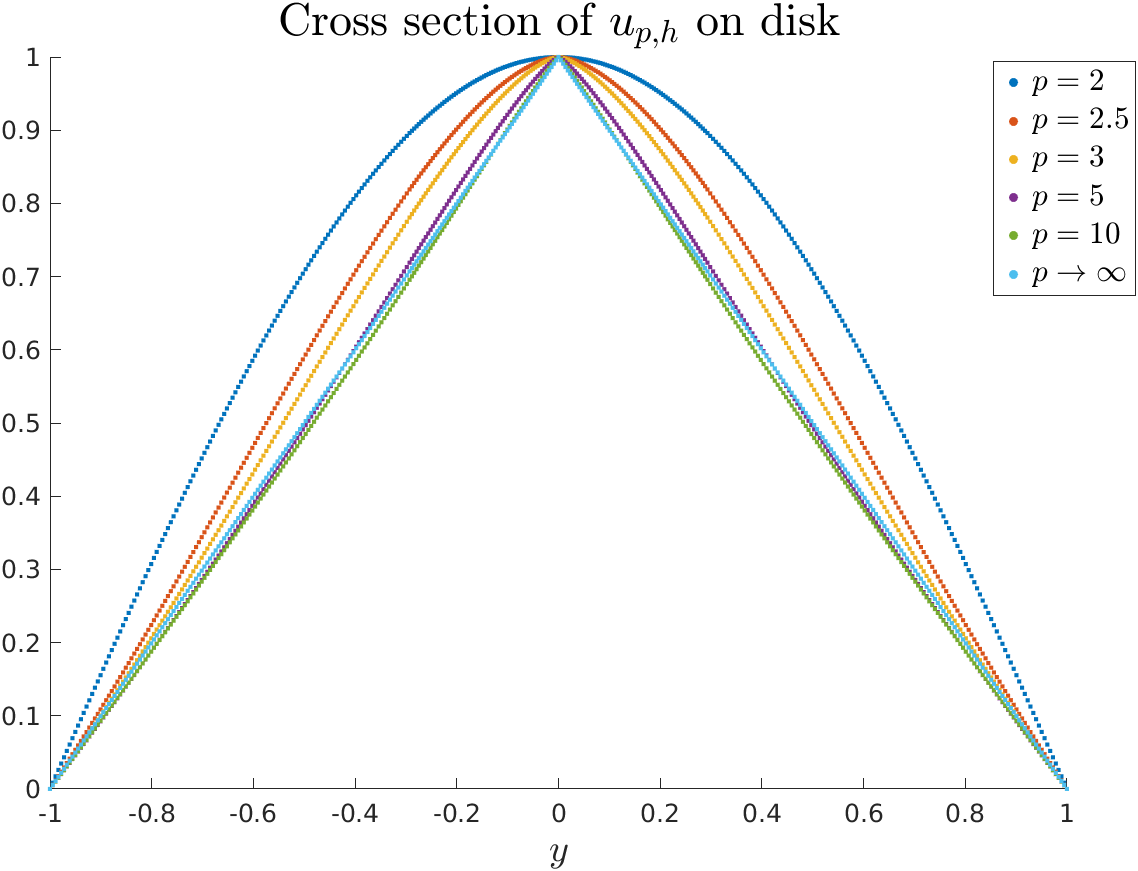}
      \hspace{1.5em}
      \includegraphics[scale = 0.35]{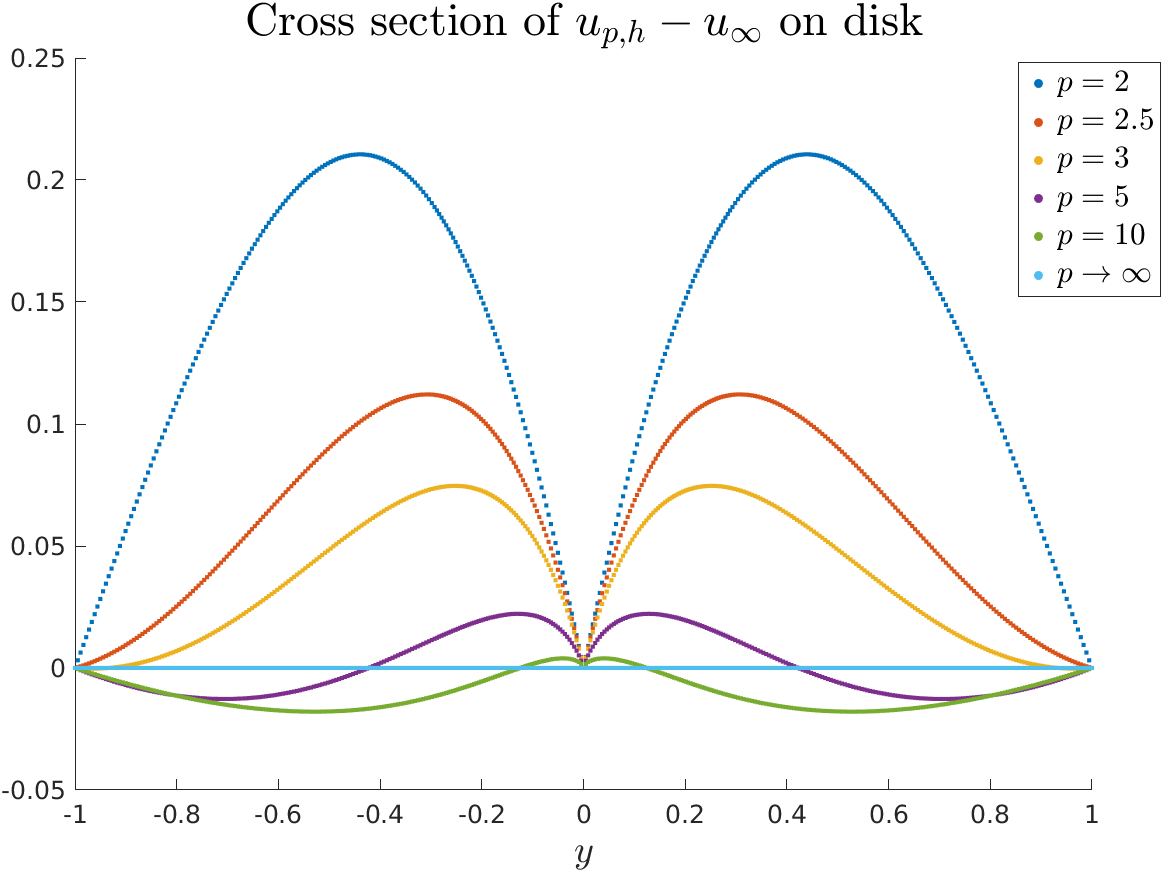}
    \caption{Left: Cross sections along $y$-axis of $\upApprox$ for various $p$ values on the disk obtained using mesh with $327{,}680$ cells. The limiting profile as $p \to \infty$ is shown in cyan. Right: The corresponding differences $\upApprox - \uInf$ illustrating the convergence of $\upApprox$ to the distance-to-boundary function.
    }
    \label{fig:6_discCross}
\end{figure}

\begin{table}[]
    \centering
    \begin{subtable}{1\textwidth} 
    \centering
\begin{tabular}{cccccc}
    \multicolumn{6}{l}{$p=3$} \\
     \toprule
     \# cells & $\upApprox$ $L^2$ error  & Rate & $\lampApprox$ relative error & Rate  & $\lampApprox$\\
     \midrule
     $320$ & $3.9306e-03$ & - & $1.6105e-02$ & - & $9.9900e+00$\\
     $1280$ & $1.1704e-03$ & 1.75 & $3.9897e-03$ & 2.01 &  $9.8709e+00$\\
     $5120$ & $3.9387e-04$ & 1.57 & $9.8437e-04$ & 2.02 &  $9.8413e+00$\\
     $20480$ & $1.6500e-04$ & 1.26 & $2.3444e-04$ & 2.07 & $9.8340e+00$\\   
      $81920$ & $8.1627e-05$ & 1.02 & $4.7046e-05$ & 2.32 & $9.8321e+00$ \\
      \bottomrule
\end{tabular}
    \end{subtable}\\
    \vspace{0.5em}
    \begin{subtable}{1\textwidth} 
     \centering
\begin{tabular}{cccccc}  
    \multicolumn{6}{l}{$p=10$} \\
     \toprule
     \# cells & $\upApprox$ $L^2$ error  & Rate & $\lampApprox$ relative error & Rate & $\lampApprox$ \\
     \midrule
     $320$ & $2.1400e-03$ & - & $8.2241e-02$ & - & $6.5715e+01$ \\
     $1280$ & $1.0672e-03$ & 1.00 & $2.1101e-02$ & 1.96 & $6.2002e+01$\\
     $5120$ & $5.2282e-04$ & 1.03 & $5.3232e-03$ & 1.99 & $6.1044e+01$ \\
     $20480$ & $2.2318e-04$ & 1.23 & $1.2816e-03$ & 2.05 & $ 6.0799e+01$ \\
      $81920$ & $7.2233e-05$ & 1.63 & $2.5761e-04$ & 2.31 & $6.0737e+01$\\
     \bottomrule   
\end{tabular}
    \end{subtable}\\
    \vspace{0.5em}
        \begin{subtable} {1\textwidth}  
         \centering                   
\begin{tabular}{cccccc}
    \multicolumn{6}{l}{$p=50$} \\
     \toprule
     \# cells & $\upApprox$ $L^2$ error  & Rate & $\lampApprox$ relative  error & Rate & $\lampApprox$  \\
     \midrule
     $320$ & $6.1781e-03$ & - & $1.7972e+00$ & - &  $3.0155e+03$\\
     $1280$ & $3.7824e-03$ & 0.71 & $4.9959e-01$ & 1.85 & $1.6166e+03$\\
     $5120$ & $1.9884e-03$ & 0.93 & $1.5316e-01$ & 1.71 &  $1.2432e+03$\\
     $20480$ & $9.0240e-04$ & 1.14 & $4.3962e-02$ & 1.80 &  $1.1254e+03$ \\
      $81920$ & $3.0849e-04$ & 1.55 & $9.9517e-03$ & 2.14 & $1.0888e+03$\\
      \bottomrule
\end{tabular}
    \end{subtable}\\
    \vspace{0.5em}
    \begin{subtable}{1\textwidth}  
     \centering
\begin{tabular}{cccccc}
    \multicolumn{6}{l}{$p=100$} \\
     \toprule
     \# cells & $\upApprox$ $L^2$ error  & Rate & $\lampApprox$ relative error & Rate & $\lampApprox$ \\
     \midrule
     $320$ & $7.2691e-03$ & - & $1.6580e+01$ & - & $7.2373e+04$ \\
     $1280$ & $4.6052e-03$ & 0.66 & $2.6480e+00$ & 2.65 &  $1.5018e+04$\\
     $5120$ & $ 2.4834e-03$ & 0.89 & $7.0793e-01$ & 1.90 &  $7.0313e+03$ \\
     $20480$ & $1.1488e-03$ & 1.11 & $2.0877e-01$ & 1.76 &  $4.9763e+03$\\  
      $81920$ & $3.9813e-04$ & 1.53 & $5.0948e-02$ & 1.63 & $4.3266e+03$ \\
      \bottomrule
\end{tabular}
    \end{subtable}\\
    \caption{
    Self-convergence study on the disk for several values of $p$, with reference taken from the solution on a mesh of $327{,}680$ cells. For each refinement level, the $L^2$ error of the eigenfunction $\upApprox$, the relative error of the first eigenvalue $\lampApprox$, and the computed values $\lampApprox$ are reported, together with the corresponding convergence rates. Note that as $p \to \infty$, the limiting eigenfunction is the distance-to-boundary function.}
    \label{tab:2_disc_self}
\end{table}

\begin{figure}
    \centering 
    \hspace{5em}
        \includegraphics[scale = 0.35]
        {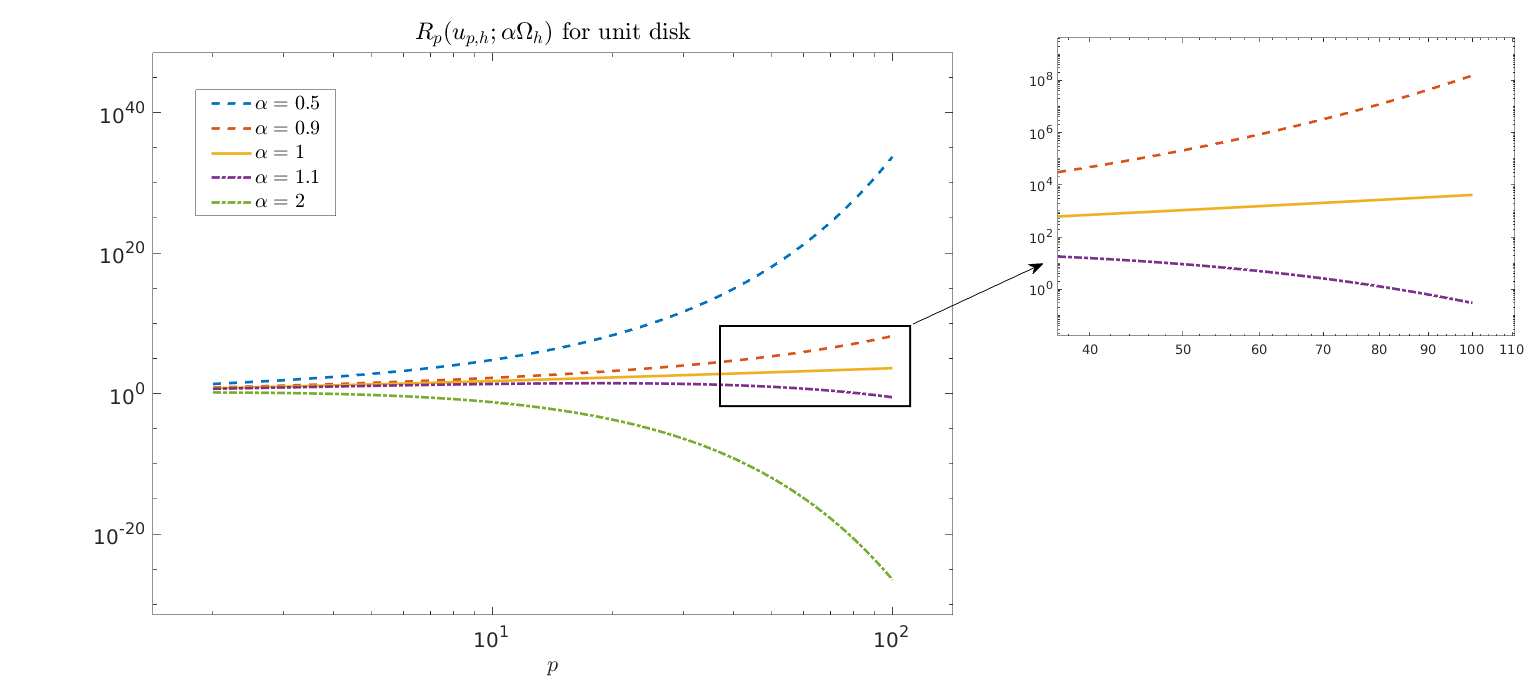}
    \caption{Eigenvalues 
on the unit disk for different scalings of the domain, $\alpha \in \{0.5, 0.9, 1, 1.1, 2\}$, plotted in log-log scale. 
The inset provides a closer view for larger $p$ to show the behavior for $\alpha \in  \{0.9, 1, 1.1\}$.  The disk mesh consists of $327{,}680$ cells under all scalings.}
    \label{fig:7_rescale_alph}
\end{figure}

\subsubsection{Square $\Omega = \{ (x, y) \in \mathbb{R}^2 : |x|+|y| < c \}$ }

In our computations we consider $c = \sqrt{2}$ for optimal asymptotic scaling. 
Note that eigenpairs may be recovered for different choices of  $c$. 
Unlike the disk, the square does not satisfy the $\mathcal{R} = \mathcal{M}$ condition for the limiting solution to coincide with a (normalized) distance-to-boundary function. It is proven by Juutinen et al.~\cite{Juutinenetal1999} that the limiting eigenfunction $\uInf$ as $p \to \infty$ is not in fact the normalized distance-to-boundary function. Consequently, as $p \to \infty$, $\uInf$ is not known analytically.  

Figure~\ref{fig:8_sqr} shows contour plots of the computed first eigenfunction on the square for a few different $p$ values. As $p$ increases, the eigenfunctions become nonsmooth along the diagonals in addition to at the origin. In contrast with the disk, there is a notable visual difference between $p = 10, 50,$ and $100$. 

Table~\ref{tab:3_self_sqr} reports the self-convergence results for the square. For small to moderate $p$, both eigenfunction and eigenvalue relative errors decrease steadily with mesh refinement, showing convergence rates of approximately second order for the eigenvalue and clearly above first order for the eigenfunction, similar to the previous unit disk example. 
As $p$ increases, the convergence rates on coarser meshes become less regular, and the eigenfunction $L^2$ error rates drop below $1$. This behavior is likely due to the nonsmooth features introduced by the square's corners, where the ridge set $\mathcal{R}$ includes the diagonals of the square while the maximal set $\mathcal{M}$ contains only the center point of the square.
However, with sufficient refinement, the errors stabilize. Our computations again show good agreement with the results reported by Horák~\cite{Horak2011}, who considered values up to $p=10$ on the square using a comparable number of degrees of freedom. Similarly to the disk, note that results on different square sizes will yield different eigenvalues according to the scaling relation~\eqref{eqn:scaling}.

\begin{table}[] 
    \centering
    \begin{subtable}{1\textwidth}   
    \centering
\begin{tabular}{cccccc}   
    \multicolumn{6}{l}{$p=3$} \\
     \toprule
     \# cells & $\upApprox$ $L^2$ error  & Rate & $\lampApprox$ relative error & Rate & $\lampApprox$  \\
     \midrule
     $256$ & $3.6137e-03$ & - & $6.6308e-03$ & - & $7.8968e+00$   \\
     $1024$ & $1.1695e-03$ & 1.63 & $1.6619e-03$ & 2.00 & $7.8578e+00$   \\
     $4096$ & $4.1277e-04$ & 1.50 & $4.0923e-04$ & 2.02 & $7.8480e+00$  \\  
      $ 16384$ & $1.7153e-04$ & 1.27  & $9.5140e-05$ & 2.10 &  $7.8455e+00$\\
     $65536$ &  $7.8053e-05$ & 1.14 &  $1.6540e-05$ &  2.52 & $7.8449e+00$\\
      \bottomrule
\end{tabular}
    \end{subtable}\\
    \vspace{0.5em}
    \begin{subtable}{1\textwidth}
     \centering 
\begin{tabular}{cccccc} 
    \multicolumn{6}{l}{$p=10$} \\
     \toprule
     \# cells & $\upApprox$ $L^2$ error  & Rate & $\lampApprox$ relative error & Rate & $\lampApprox$ \\
     \midrule
     $256$ & $3.1717e-03$ & - & $9.5620e-02$ & - & $3.8304e+01$ \\
     $1024$ & $1.5750e-03$ & 1.01 & $ 2.7096e-02$ & 1.82 &  $3.5908e+01$ \\
     $4096$ &  $7.1200e-04$ & 1.15 & $7.1603e-03$ & 1.92 & $3.5211e+01$  \\   
      $16384$ & $2.8564e-04$ & 1.32 & $1.7603e-03$ & 2.02 &  $3.5022e+01$\\  
    $65536$ &  $8.8702e-05$ &  1.69 &  $3.5712e-04$ &  2.30 & $3.4973e+01$\\
      \bottomrule
\end{tabular}
    \end{subtable}\\
    \vspace{0.5em}
    \begin{subtable}{1\textwidth}  
         \centering 
\begin{tabular}{cccccc}
    \multicolumn{6}{l}{$p=50$} \\
     \toprule
     \# cells & $\upApprox$ $L^2$ error  & Rate & $\lampApprox$ relative error & Rate & $\lampApprox$ \\
     \midrule
     $256$ & $1.8446e-02$ & - & $4.7445e+00$ & - & $1.7515e+03$ \\
     $1024$ & $1.1113e-02$ & 0.73 & $1.0510e+00$ & 2.17 &  $6.2534e+02$  \\
     $4096$ & $5.4332e-03$ & 1.03 & $2.9556e-01$ & 1.83 & $3.9502e+02$  \\  
     $16384$ & $2.2000e-03$ & 1.30 &  $8.2135e-02$ &  1.85 & $3.2994e+02$ \\
     $65536$ &  $7.9015e-04$ &  1.48 &  $1.8215e-02$ &  2.17 & $3.1045e+02$\\
      \bottomrule
\end{tabular}
    \end{subtable}\\
    \vspace{0.5em}
    \begin{subtable}{1\textwidth}  
     \centering
\begin{tabular}{cccccc} 
    \multicolumn{6}{l}{$p=100$} \\
     \toprule
     \# cells & $\upApprox$ $L^2$ error  & Rate & $\lampApprox$ relative error & Rate & $\lampApprox$ \\ 
     \midrule
     $256$ & $2.3859e-02$ & - & $6.1194e+02$ & - & $5.8252e+05$  \\
     $1024$ & $2.1900e-02$ & 0.12 & $ 3.2319e+01$ & 4.24 & $3.2627e+04$\\
     $ 4096$ & $ 1.4474e-02$ & 0.60 & $4.4159e+00$ & 2.87 & $5.1471e+03$\\   
       $16384$ &  $8.0884e-03$ &  0.84 &  $8.4617e-01$ &  2.38 & $1.7545e+03$\\
      $65536$ &  $5.0951e-03$ &  0.67 &  $2.4957e-01$ &  1.76 & $1.1840e+03$\\
      \bottomrule
\end{tabular}
    \end{subtable}\\
    \caption{Self-convergence study on the square for several values of $p$, with reference taken from the solution on a mesh of $262{,}144$ cells. For each refinement level, the $L^2$ error of the eigenfunction $\upApprox$, the relative error of the first eigenvalue $\lampApprox$, and the computed values of $\lampApprox$ are reported, together with the corresponding convergence rates. Note that as $p \to \infty$, the limiting eigenfunction is not known. Here, we use a reduced continuation step of $\delta p = 0.25$ in Algorithm~\ref{alg:newton_inverse_power}, since the ridge set includes the two diagonals of the square.
    }
    \label{tab:3_self_sqr}
\end{table}

\begin{figure} 
    \centering
    \begin{subfigure}{0.18\textwidth}
        \centering
         \includegraphics[scale = 0.11]{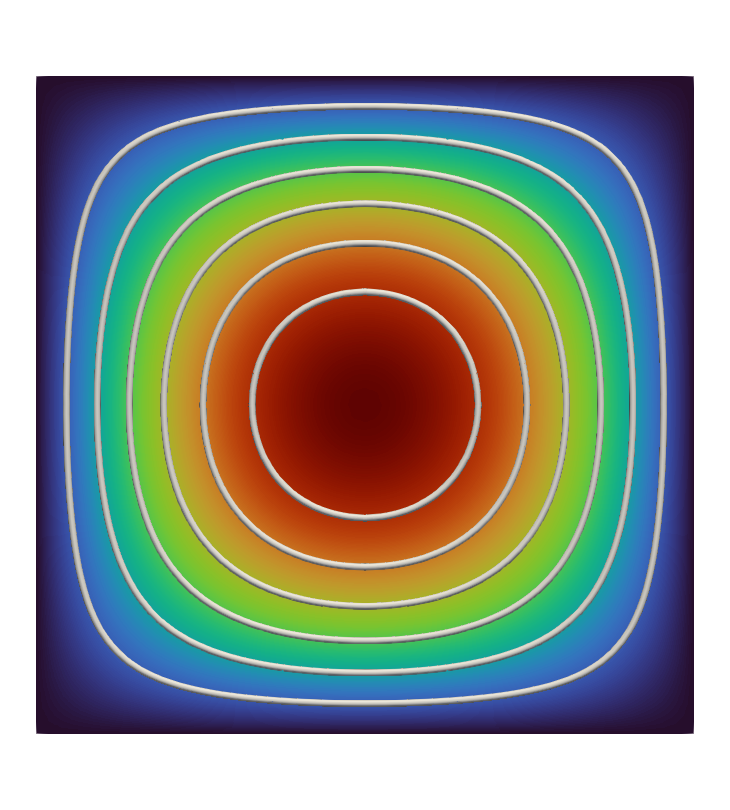}
           \subcaption{  $p = 2$}
    \end{subfigure}
     \begin{subfigure}{0.18\textwidth}
        \centering
         \includegraphics[scale = 0.11]{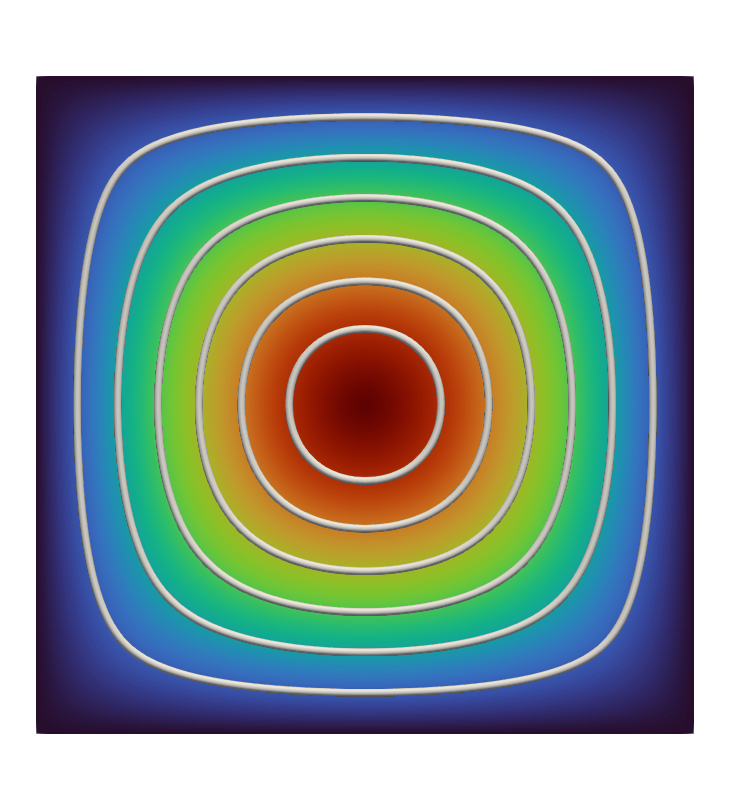}
           \subcaption{  $p = 3$}
    \end{subfigure}
        \centering
    \begin{subfigure}{0.18\textwidth}
        \centering
         \includegraphics[scale = 0.11]{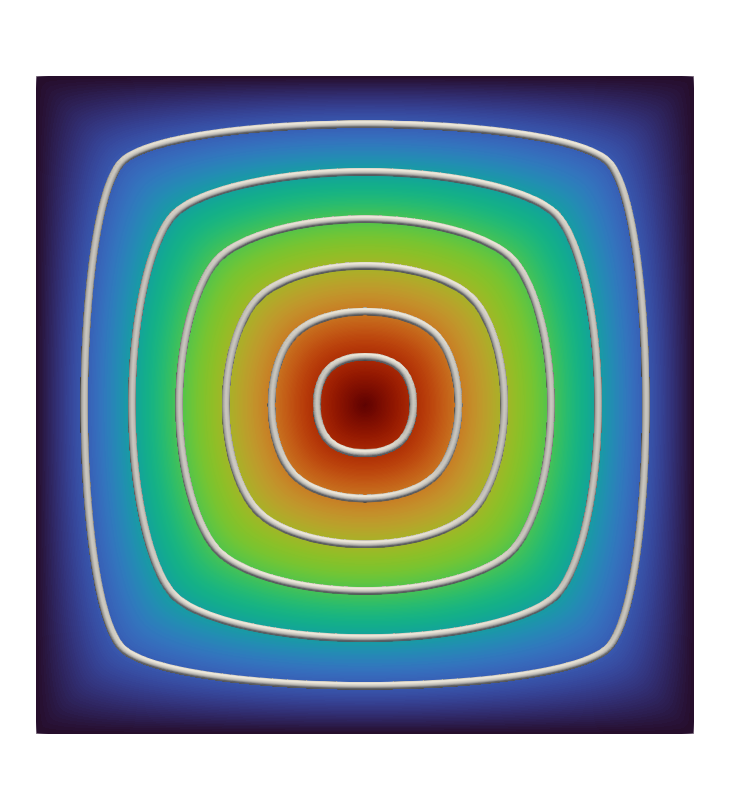}
           \subcaption{  $p = 10$}
    \end{subfigure}
     \begin{subfigure}{0.18\textwidth}
        \centering
         \includegraphics[scale = 0.11]{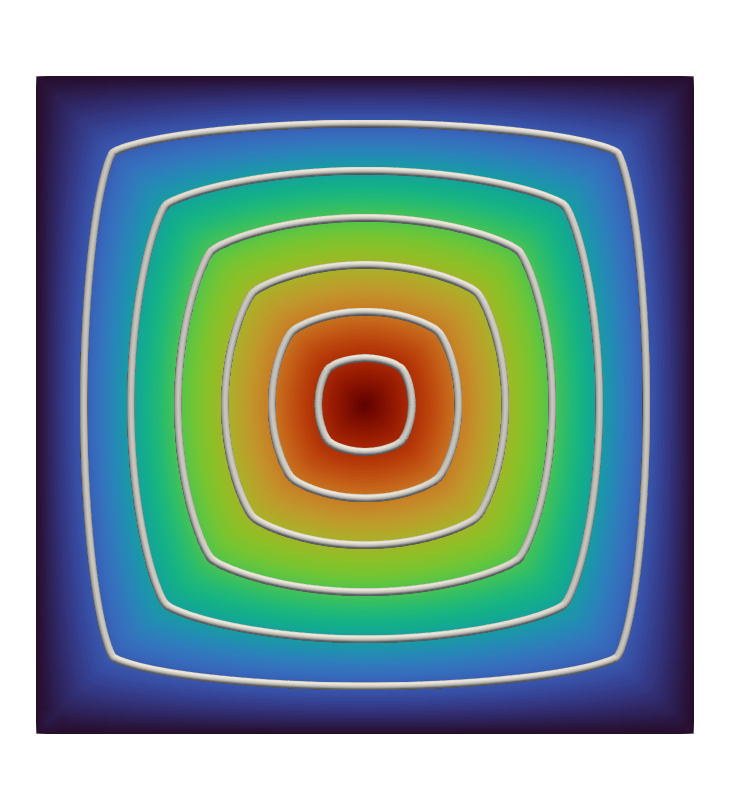}
           \subcaption{  $p = 50$}
    \end{subfigure}
     \begin{subfigure}{0.18\textwidth}
     \centering
         \includegraphics[scale = 0.11]{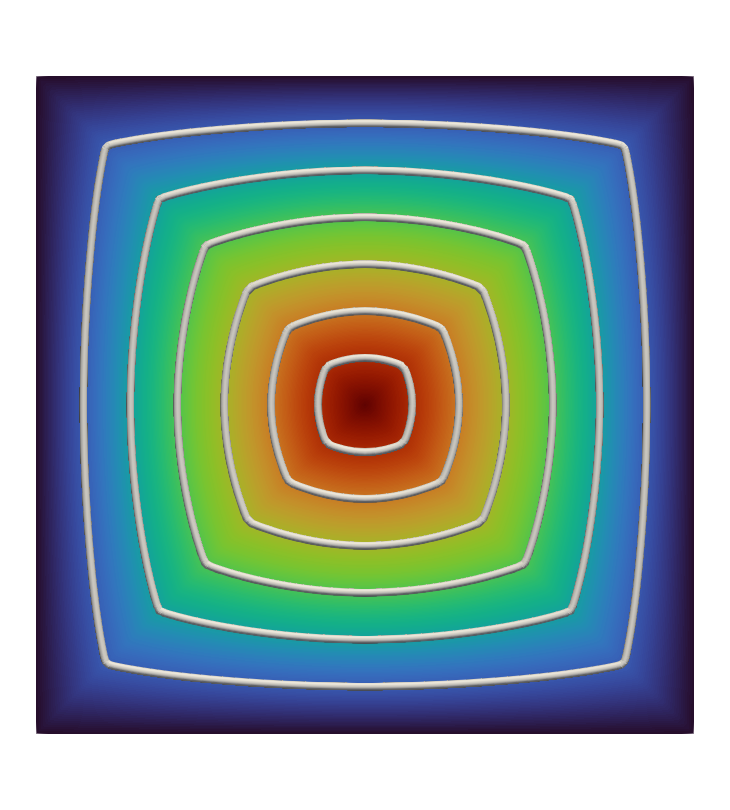}
          \subcaption{  $p = 100$}
    \end{subfigure}\\
    \caption{Contour plots of the first eigenfunction $\upApprox$ on the square. Plots (a)–(e) show results for different values of $p$. All computations use a mesh with $262{,}144$ cells. Contour levels are kept consistent across plots. 
    Note that the limiting $p \to \infty$ eigenfunction is \emph{not} the distance-to-boundary function on the square (see~\cite{Juutinenetal1999}).
    }
    \label{fig:8_sqr}
\end{figure}

Figure~\ref{fig:4_eig_many} shows the computed eigenvalues $\lampApprox$ and corresponding roots $\lampApprox^{1/p}$. We observe that these eigenvalue approximations on the square do not resemble those of any other example considered.
This difference is likely due to the square's geometric characteristics differing in nature from those of the disk, as suggested in the previous paragraph. 

\subsection{Surface domains}
\label{subsec:surf}

In this subsection, we consider $p$-Laplace eigenvalue problems on curved surfaces  of codimension $1$ lying in $\mathbb{R}^3$. 
We first study two simple surfaces, the hemisphere and the half torus, for which $\mathcal{R} = \mathcal{M}$, and therefore, from Section~\ref{subsec:asymptotic}, we know the first eigenfunction should converge toward the (geodesic) distance-to-boundary function as $p \to \infty$, a result detailed in Appendix~\ref{app:surface-limits}. 
To illustrate the versatility of the method, we also consider several ``fun" examples by reading in surface meshes including a duck, a hand, a mug, and a pig. For these domains, the symmetry assumption is generally not satisfied, and the location of the ridge set is less predictable, so the limiting eigenfunction is unknown. We use Algorithm~\ref{alg:newton_inverse_power} for the hemisphere and half torus, where the maximum geodesic distance-to-boundary is known, and Algorithm~\ref{alg:newton_inverse_power_rescale} for the ``fun'' examples, where this value is not known exactly.

\subsubsection{Hemisphere $\Omega = \{ (x, y, z) \in \mathbb{R}^3 : \sqrt{x^2+y^2+z^2} = R, \; z > 0 \}$ }

For the hemisphere, the condition $\mathcal{R} = \mathcal{M}$ discussed in Section~\ref{subsec:asymptotic} is satisfied. 
Therefore, as $p \to \infty$, the first eigenfunction approaches
\begin{equation*}
        \uInf(\mathbf{x}) = \frac{\distx}{\|\textrm{dist}(\cdot,\partial \Omega)\|_\infty} = \frac{2}{\pi} \left( \frac{\pi}{2}-\arccos\left(\frac{z}{R}\right) \right),
\end{equation*}
where $\distx$ denotes the geodesic distance along the hemisphere from the point $\mathbf{x}=(x,y,z)$ to the boundary at the equator. This expression follows directly from the spherical law of cosines.

In our computations, we scale the domain so that the maximal geodesic distance from the boundary is $1$. For a hemisphere of radius $R$, the maximal geodesic distance occurs at the pole $(0,0,R)$ and is given by $R \pi/2$, so the scaling factor $\alpha = 2/\pi$ ensures $\max_{\bx \in \Omega} \distx = 1$. As with the Euclidean examples, results may be recovered for other scalings. 

Table~\ref{tab:4_self_hemi} reports the self-convergence results for various $p$, showing $L^2$ errors of the eigenfunction, relative errors of the first eigenvalue, and computed values of $\lampApprox$. For small to moderate values of $p$ ($p=3,10$), the observed convergence rates are close to $2$ for the eigenvalue relative errors and mostly well above $1$ for the eigenfunction $L^2$ errors. For larger values ($p=50,100$), the convergence rates on coarser meshes are irregular, but they appear to become more consistent on finer meshes. Overall, the convergence behavior is similar to that of the Euclidean disk considered above, and better than that of the square. 

Figure~\ref{fig:9_hemi} shows the contour plots of $\upApprox$ for selected $p$ values, showing how we approach the limiting (geodesic) distance-to-boundary function as $p$ increases. Note this is qualitatively the same behavior as observed for the disk, with visually comparable results for $p=10$ and $p = \infty$.  

\begin{table}[]
    \centering
    \begin{subtable}{1\textwidth}   
    \centering
\begin{tabular}{cccccc}
    \multicolumn{6}{l}{$p=3$} \\
     \toprule
     \# cells & $\upApprox$ $L^2$ error  & Rate & $\lampApprox$ relative error & Rate & $\lampApprox$ \\
     \midrule
     $320$ & $2.0717e-03$ & - & $9.4180e-03$ & - &  $8.4999e+00$\\
     $1280$ & $6.7071e-04$ & 1.63 & $2.3446e-04$ & 2.01 &   $8.4404e+00$\\
     $5120$ & $2.4599e-04$ & 1.45 & $5.7906e-04$ & 2.02 & $8.4255e+00$ \\
     $20480$ & $1.0941e-04$ & 1.17 & $1.3781e-04$ & 2.07 & $8.4218e+00$ \\  
     $81920$ & $5.5967e-05$ & 0.97 & $2.7504e-05$ & 2.32  & $8.4208e+00$ \\
      \bottomrule
\end{tabular}
    \end{subtable}\\
    \vspace{0.5em}
    \begin{subtable}{1\textwidth}
     \centering
\begin{tabular}{cccccc} 
    \multicolumn{6}{l}{$p=10$} \\
     \toprule
     \# cells & $\upApprox$ $L^2$ error  & Rate & $\lampApprox$ relative error & Rate & $\lampApprox$ \\
     \midrule
     $320$ & $2.0009e-03$ & - & $5.3068e-02$ & - & $5.4842e+01$ \\
     $1280$ & $9.8990e-04$ & 1.02 & $1.3721e-02$ & 1.95 &$5.2793e+01$  \\
     $5120$ & $4.2857e-04$ & 1.21 & $3.4622e-03$ & 1.99 & $5.2258e+01$ \\
     $20480$ & $1.5659e-04$ & 1.45 & $8.3294e-04$ & 2.06 & $5.2122e+01$  \\ 
      $81920$ & $4.6054e-05$ & 1.77 & $1.6738e-04$ & 2.32 & $5.2087e+01$ \\
     \bottomrule    
\end{tabular}
    \end{subtable}\\
    \vspace{0.5em}
    \begin{subtable}{1\textwidth}  
         \centering
\begin{tabular}{cccccc}
    \multicolumn{6}{l}{$p=50$} \\
     \toprule
     \# cells & $\upApprox$ $L^2$ error  & Rate & $\lampApprox$ relative error & Rate & $\lampApprox$ \\
     \midrule
     $320$ & $6.0036e-03$ & - & $1.1643e+00$ & - &   $2.0099e+03$\\
     $1280$ & $3.3519e-03$ & 0.84 & $3.5539e-01$ & 1.70 & $1.2587e+03$  \\
     $5120$ & $1.6074e-03$ & 1.06 & $1.1100e-01$ & 1.68 &  $1.0318e+03$ \\
     $20480$ & $6.4206e-04$ & 1.32 & $3.1691e-02$ & 1.81 &  $9.5812e+02$\\  
    $81920$ &  $2.0183e-04$ &  1.67 &    $7.1086e-03$ &    2.16 &    $9.3529e+02$\\
      \bottomrule
\end{tabular}
    \end{subtable}\\
    \vspace{0.5em}
    \begin{subtable}{1\textwidth}  
     \centering
\begin{tabular}{cccccc}
    \multicolumn{6}{l}{$p=100$} \\
     \toprule
     \# cells & $\upApprox$ $L^2$ error  & Rate & $\lampApprox$ relative error & Rate & $\lampApprox$ \\ 
     \midrule
     $320$ & $6.9977e-03$ & - & $8.3074e+00$ & - &  $3.2896e+04$ \\
     $1280$ & $4.0540e-03$ & 0.79 & $1.7604e+00$ & 2.24 & $9.7565e+03$  \\
     $5120$ & $2.0038e-03$ &  1.02 & $5.1570e-01$ & 1.77 & $5.3571e+03$ \\
     $20480$ & $8.1840e-04$ & 1.29 & $1.5555e-01$ & 1.73 & $4.0842e+03$  \\
      $81920$ &    $2.6150e-04$ & 1.65  &    $ 3.7896e-02$ &    2.04 &   $3.6684e+03$  \\
      \bottomrule
\end{tabular}
    \end{subtable}\\
    \caption{
        Self-convergence study on the hemisphere for several values of $p$, with reference taken from the solution on a mesh of $327{,}680$ cells. For each refinement level, the $L^2$ error of the eigenfunction $\upApprox$, the relative error of the first eigenvalue $\lampApprox$, and the computed values of $\lampApprox$ are reported, together with the corresponding convergence rates. Note that as $p \to \infty$, the limiting eigenfunction is the (geodesic) distance-to-boundary function. }
    \label{tab:4_self_hemi}
\end{table}

\begin{figure} 
    \centering
    \begin{subfigure}{0.18\textwidth}
        \centering
         \includegraphics[scale = 0.075]{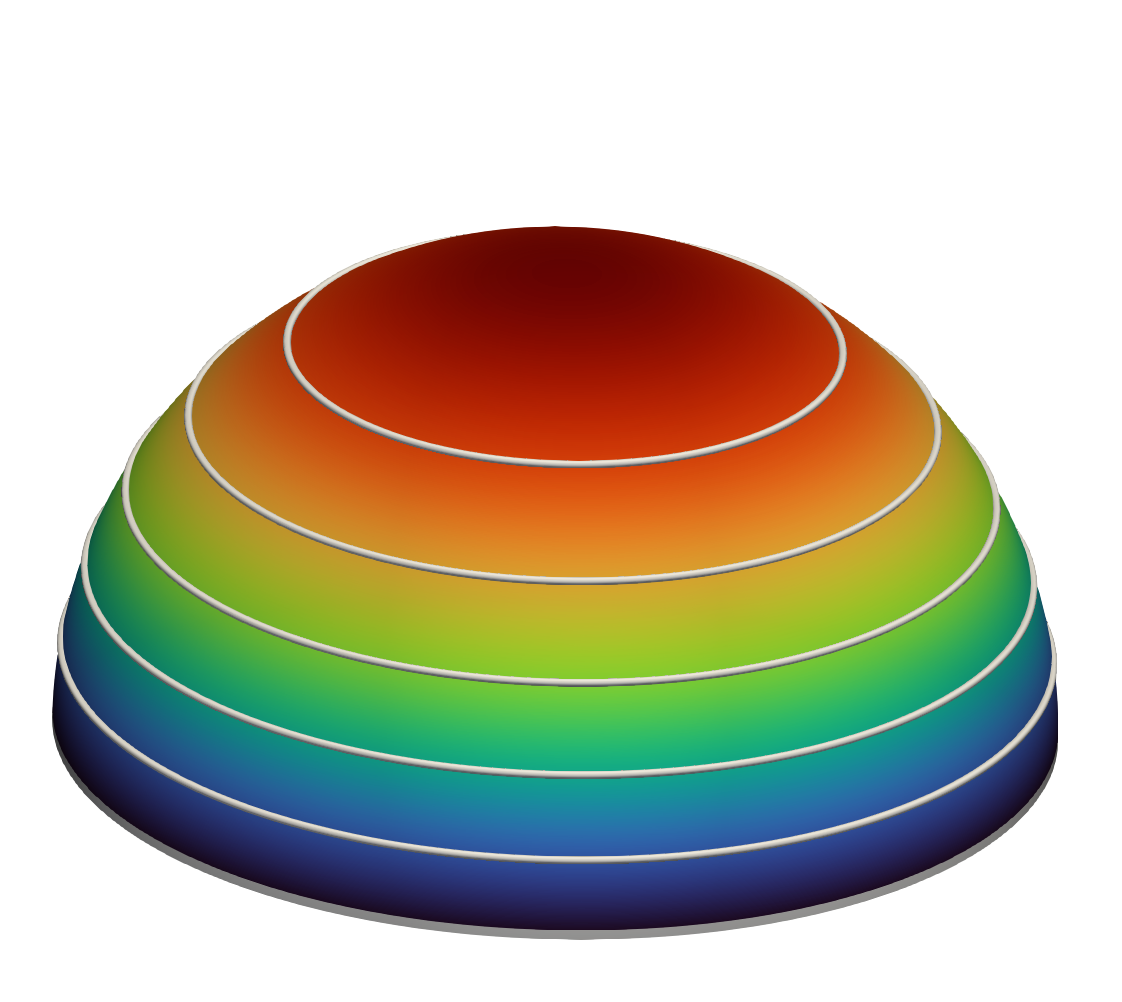}
           \subcaption{  $p = 2$}
    \end{subfigure}
     \begin{subfigure}{0.18\textwidth}
        \centering
         \includegraphics[scale = 0.075]{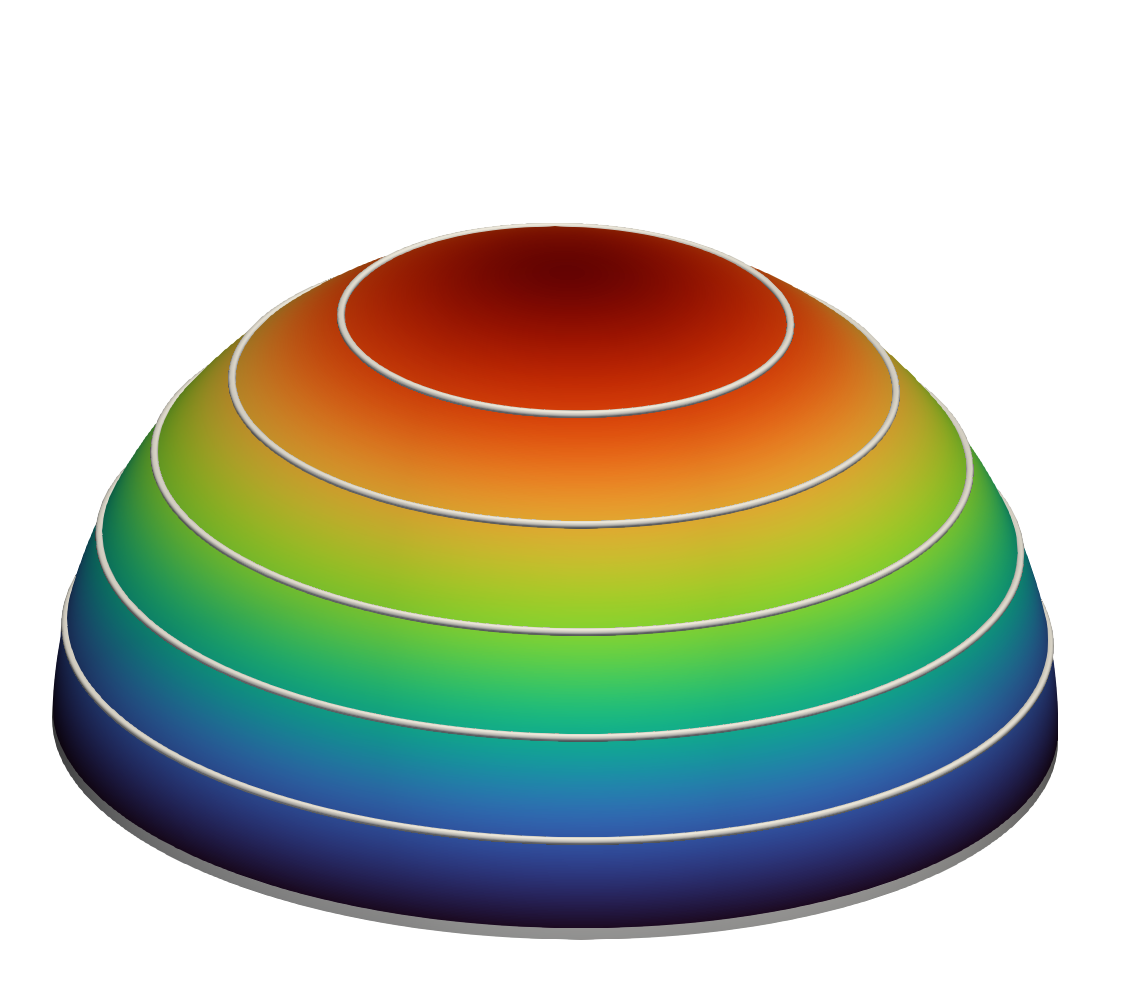}
           \subcaption{  $p = 2.5$}
    \end{subfigure}
     \begin{subfigure}{0.18\textwidth}
     \centering
         \includegraphics[scale = 0.075]{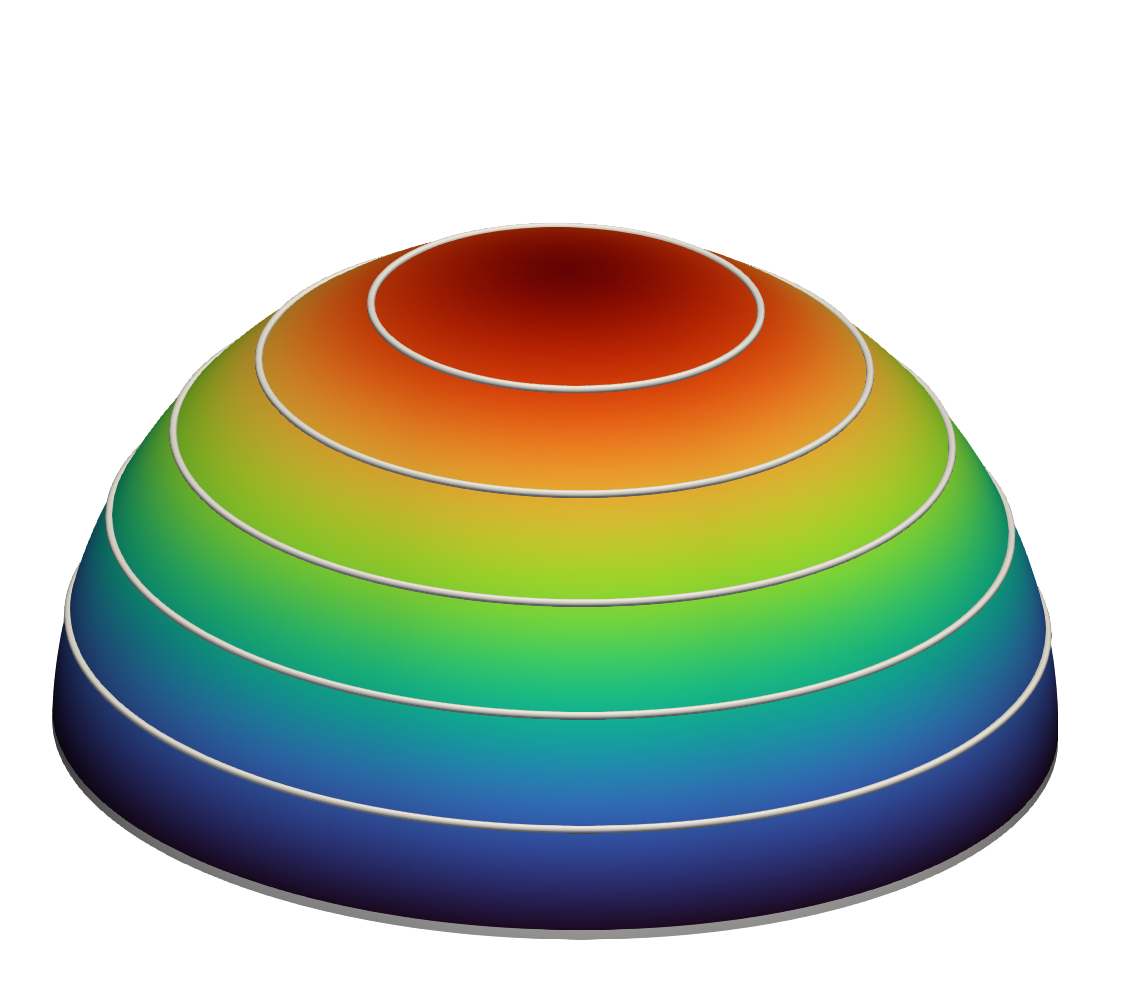}
          \subcaption{  $p = 3$}
    \end{subfigure}
     \begin{subfigure}{0.18\textwidth}
        \centering
         \includegraphics[scale = 0.075]{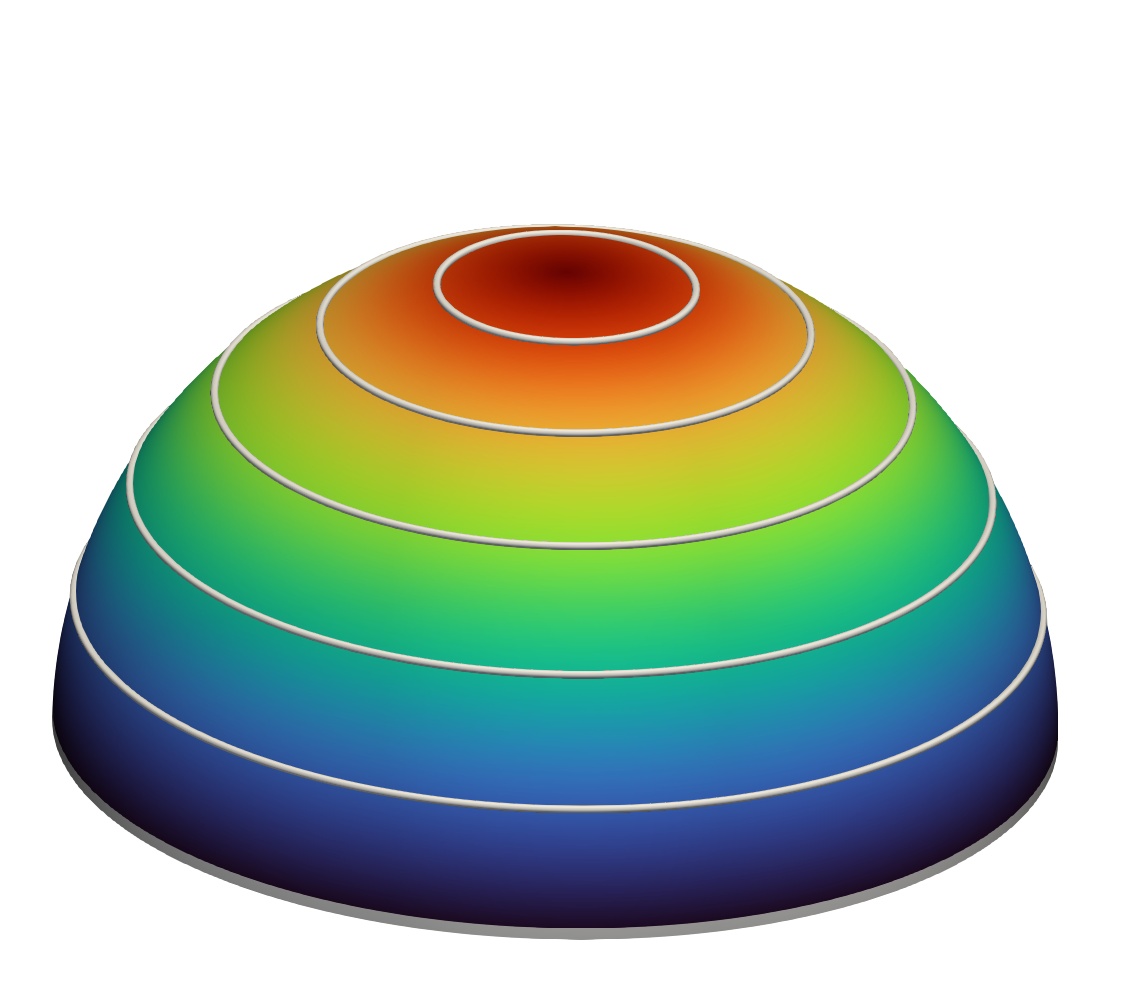}
           \subcaption{  $p = 10$}
    \end{subfigure}
     \begin{subfigure}{0.18\textwidth}
     \centering
         \includegraphics[scale = 0.075]{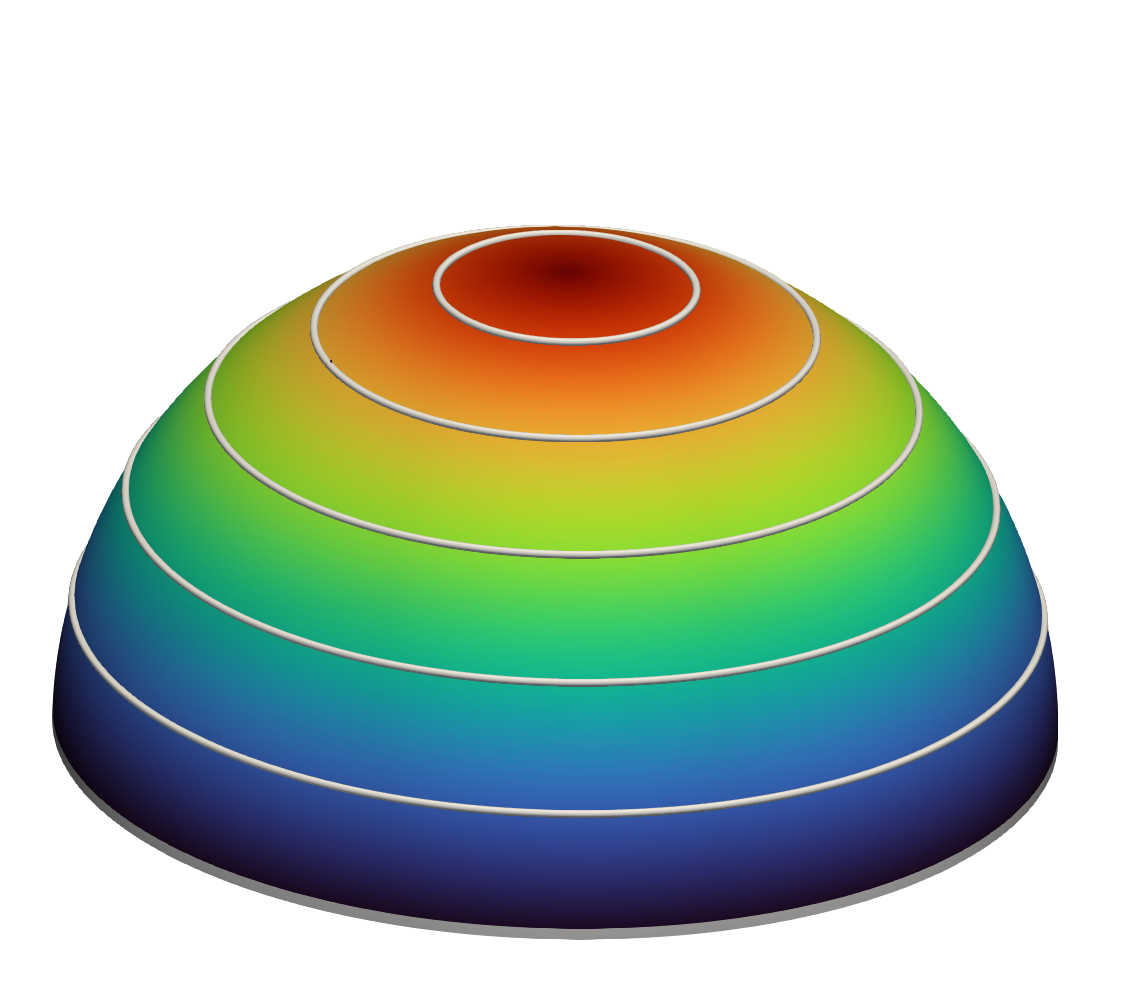}
          \subcaption{$p \to \infty$}
    \end{subfigure}\\
    \caption{Contour plots of $\upApprox$ on the hemisphere.    
    Plots (a)–(d) correspond to different values of $p$, and plot (e) shows the limiting solution as $p \to \infty$. All computations use a mesh with $327{,}680$ cells. Isolines are shown at the same levels across plots.}
    \label{fig:9_hemi}
\end{figure}

Figure~\ref{fig:4_eig_many} shows the computed eigenvalues and their corresponding $p$th roots. The eigenvalues for the hemisphere appear to have similar growth to those for the disk, although lying slightly below. 

\subsubsection{Half torus 
$ \Omega = \{ (x,y,z) \in \mathbb{R}^3 : (2 - \sqrt{x^2 + y^2})^2 + z^2 = 1,
\; z > 0 \}$ }

The domain $\Omega$ corresponds to the upper half of a torus obtained by a transversal cut along the plane $z=0$, analogous to slicing a bagel horizontally. Dirichlet boundary conditions are imposed on $\partial\Omega$, which consists of the two closed curves where $z=0$. These curves separate the torus into symmetric upper and lower halves with respect to the $z=0$ plane.

The contour plots in Figure~\ref{fig:10_tor} illustrate how the first eigenfunction $\upApprox$ on the half torus changes with increasing $p$. As for the hemisphere, the eigenfunction gradually approaches the limiting distance-to-boundary solution, with the contours becoming increasingly aligned with the geodesic distance from the boundary at $z=0$. The qualitative behavior is very similar to that observed in the Euclidean examples and on the hemisphere. For conciseness, we omit a detailed self-convergence study here, as the convergence trends are consistent with those reported previously.

\begin{figure} 
    \centering
        \begin{subfigure}{0.18\textwidth}
        \centering
         \includegraphics[scale = 0.065]{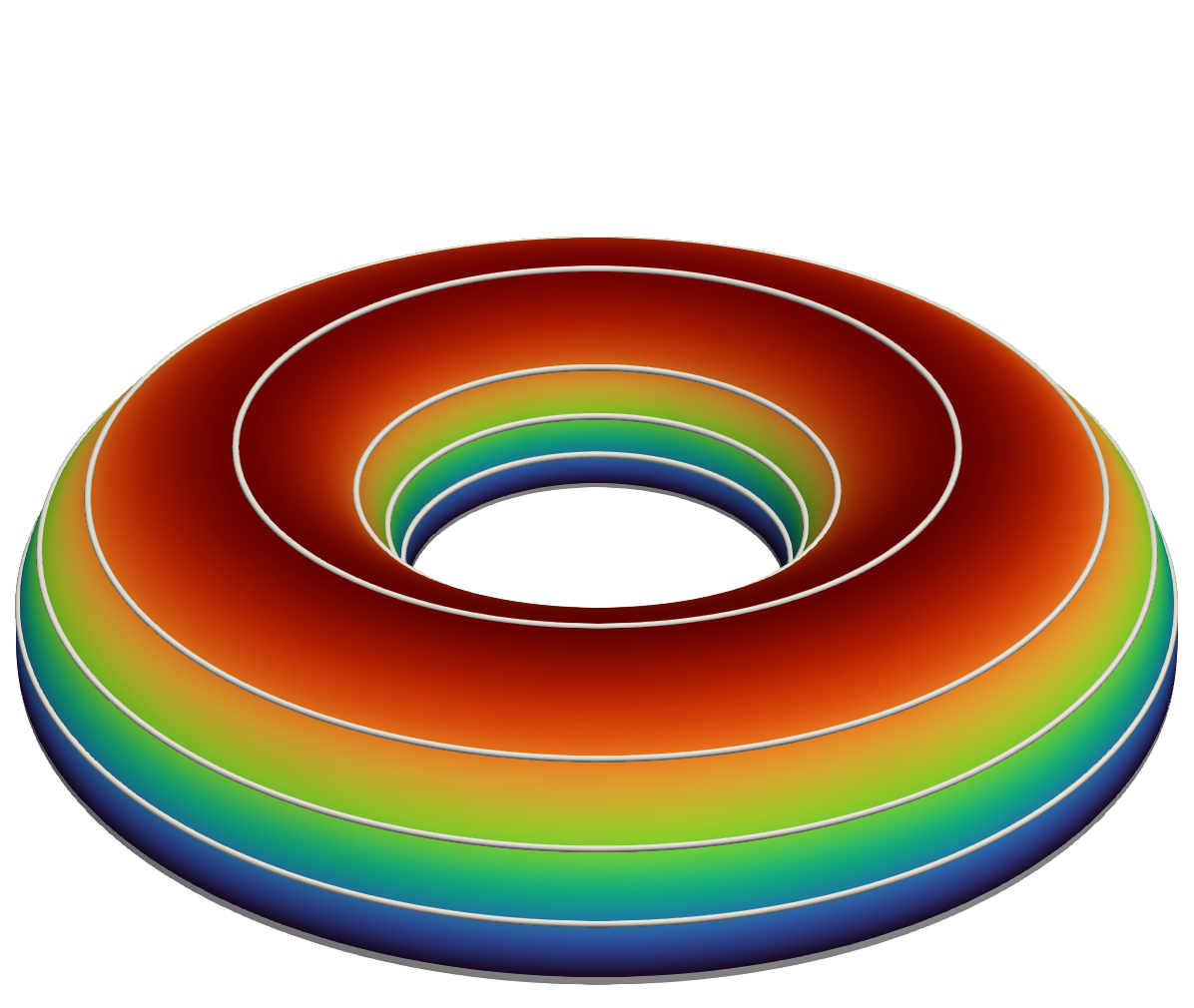}
           \subcaption{  $p = 2$}
    \end{subfigure}
     \begin{subfigure}{0.18\textwidth}
        \centering
         \includegraphics[scale = 0.065]{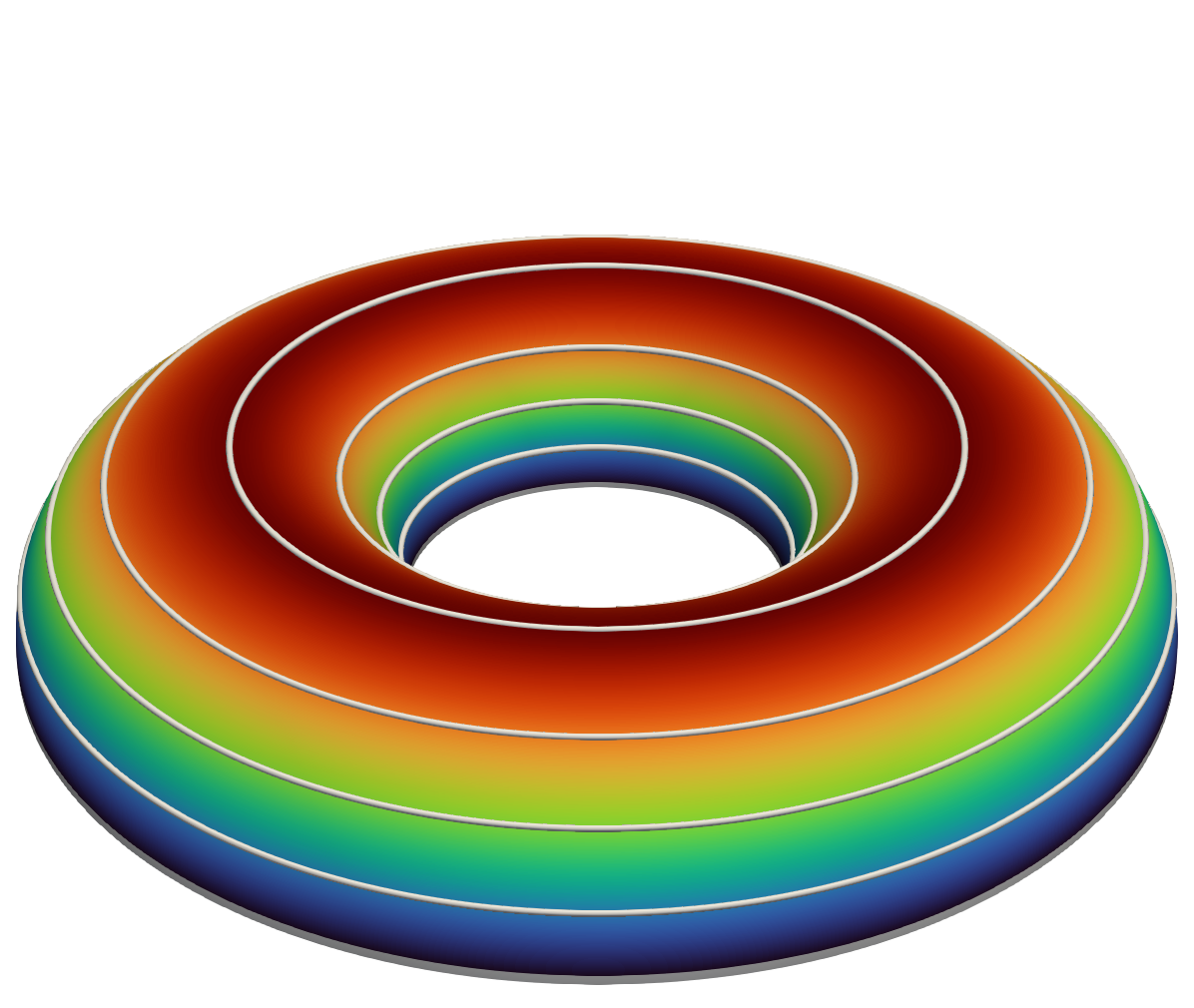}
           \subcaption{  $p = 2.5$}
    \end{subfigure}
     \begin{subfigure}{0.18\textwidth}
     \centering
         \includegraphics[scale = 0.065]{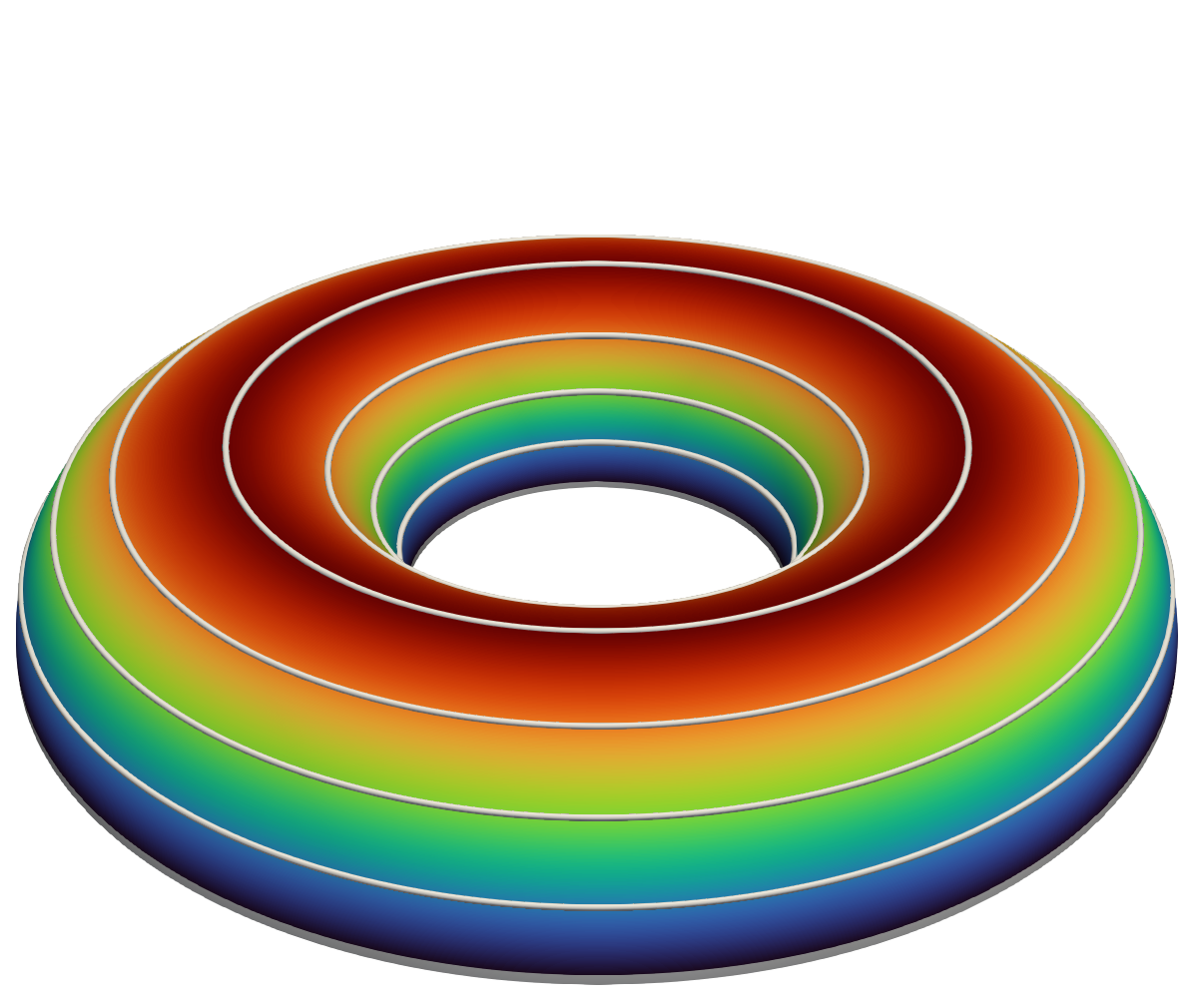}
          \subcaption{  $p = 3$}
    \end{subfigure}
     \begin{subfigure}{0.18\textwidth}
        \centering
         \includegraphics[scale = 0.065]{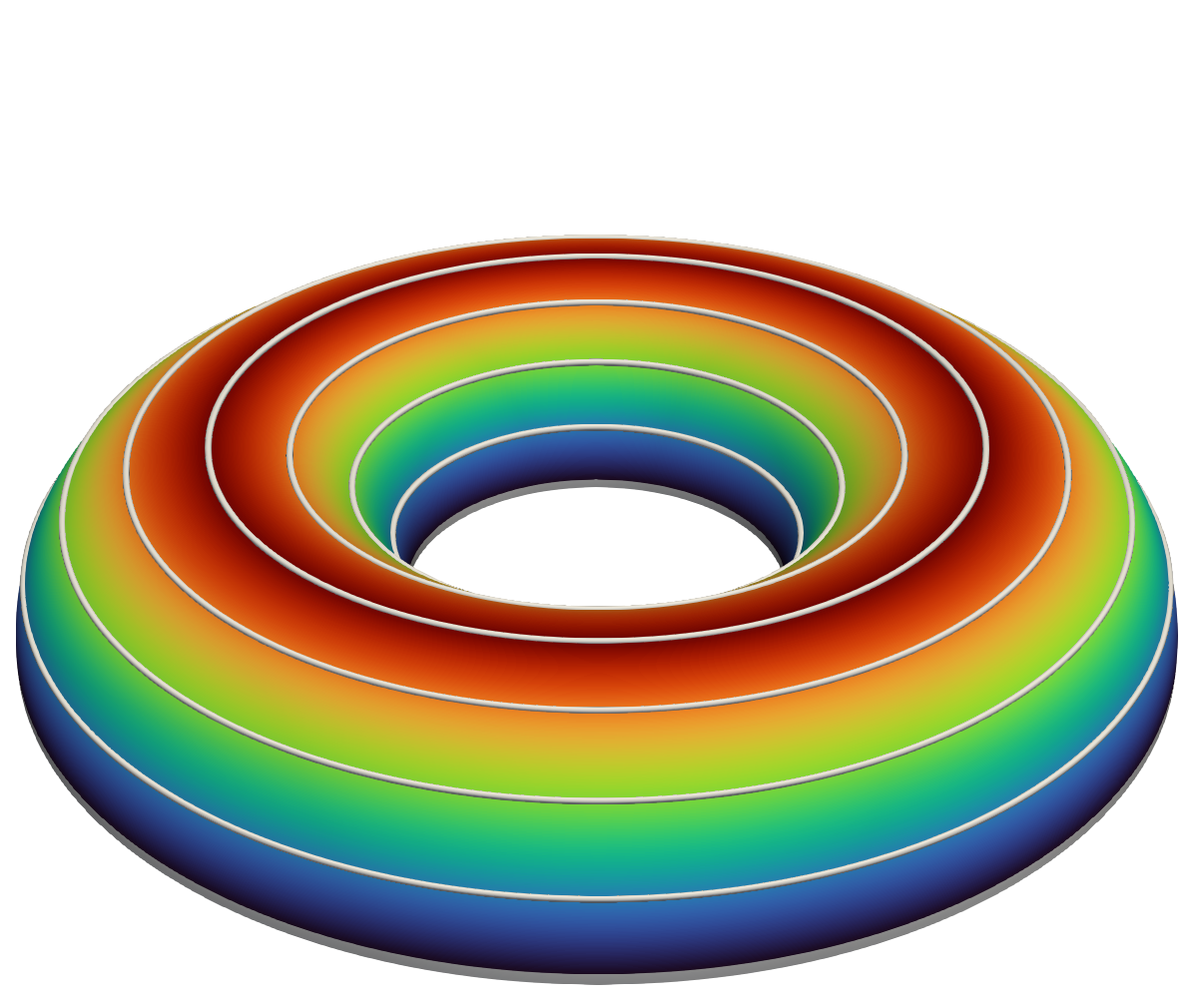}
           \subcaption{  $p = 10$}
    \end{subfigure}
     \begin{subfigure}{0.18\textwidth}
     \centering
         \includegraphics[scale = 0.065]{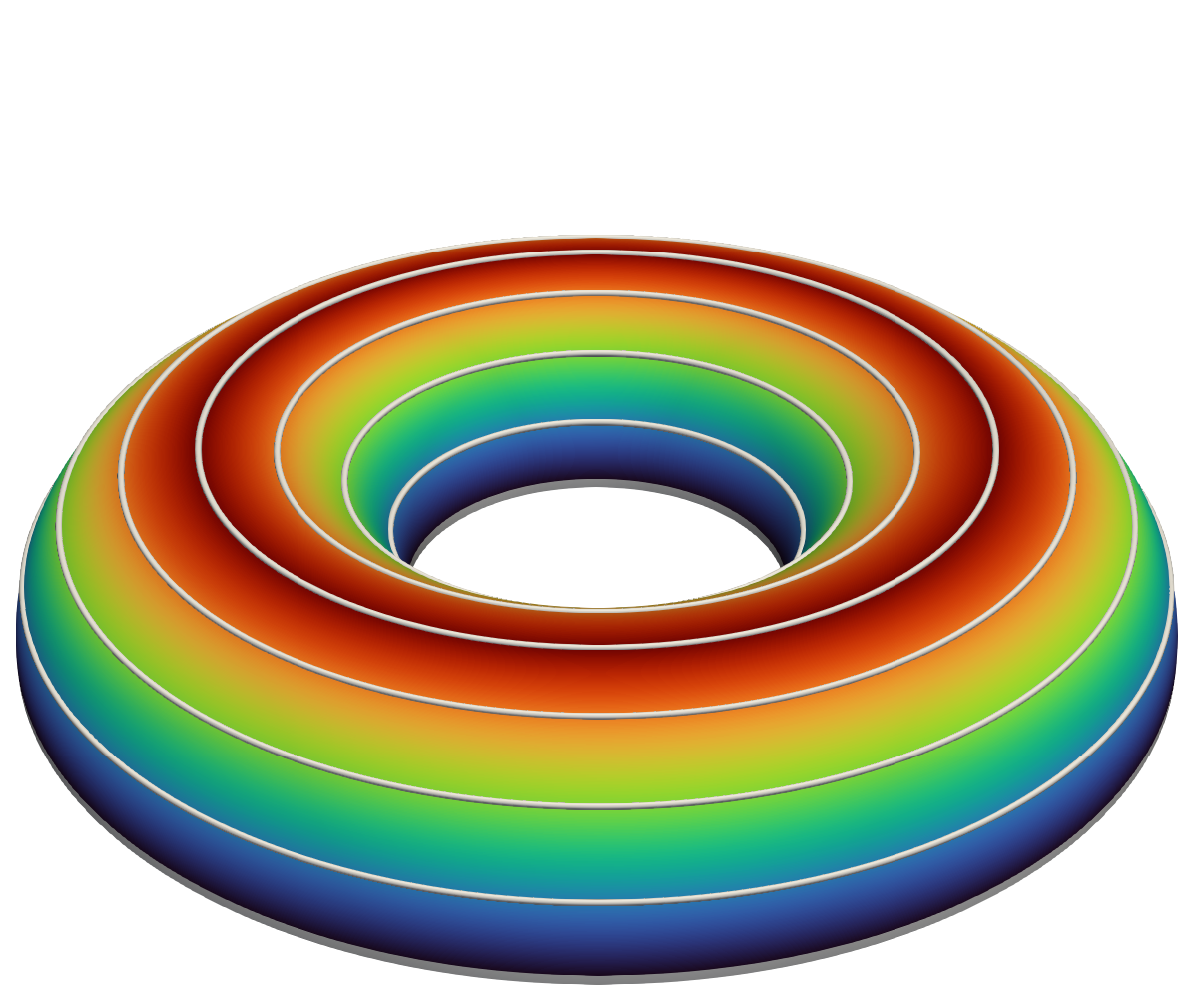}
          \subcaption{$p \to \infty$}
    \end{subfigure}\\
    \caption{ Contour plots of $\upApprox$ on the upper half torus. 
    Plots (a)–(d) correspond to different values of $p$, and plot (e) shows the limiting solution as $p \to \infty$. All computations use a mesh with $784{,}896$ cells. Isolines are shown at the same levels across plots.}
    \label{fig:10_tor}
\end{figure}

Figure~\ref{fig:4_eig_many} shows that the computed eigenvalues for the half torus are very close to the corresponding $1$D eigenvalues, which grow linearly in $p$ for large $p$. 
Intuitively, points furthest from the boundary lie along a closed curve, and the distance to the boundary changes only by moving inward or outward from this curve. This two-direction structure mirrors the $1$D setting, leading to eigenvalues that closely resemble those of a semicircle.

\subsubsection{Geometrically complex surfaces}

In Figure~\ref{fig:11_fun_eig}, we plot the computed eigenvalues $\lampApprox$ along with their $p$th roots for the following examples: (i) duck mesh with $\partial \Omega$ taken along the central axis which divides the surface into two symmetric halves, (ii) hand mesh with $\partial \Omega$ corresponding to the natural wrist boundary, (iii) mug mesh with $\partial \Omega$ defined as a single point (or small $\epsilon$-ball) on the outer surface, and (iv) pig mesh with $\partial \Omega$ defined as a single point (or small $\epsilon$-ball) on the left ear. From a PDE analysis point of view, prescribing Dirichlet conditions on isolated points is degenerate. In this case, one can consider $\partial \Omega$ to be the boundary of an arbitrarily small geodesic ball centered around the single point.

For the hand and mug geometries, the original mesh resolutions exhibited an uptick in $\lampApprox$ around $p \sim 40$, which flattened out after one level of mesh refinement in \texttt{deal.II} and aligned with the trend observed for lower 
$p$. Insufficient mesh resolution can lead to an artificial increase in the nonlinear Rayleigh quotient~\eqref{eqn:Rayleigh}, as seen in prior examples, in particular the square (see Table~\ref{tab:3_self_sqr}). The refined results are therefore presented in Figure~\ref{fig:11_fun_eig}, as they provide a more accurate representation of the true eigenvalue behavior for $p > 40$. 

Visualizations of these surfaces are shown in Figure~\ref{fig:12_fun}. Since the maximum geodesic distance to the boundary is not known exactly for these domains, the spectrum is reported under approximately optimal scaling by taking $\alpha = d_m^{-1}$, where $d_m$ is an approximate value of $\max_{\bx \in \Omega} \distx$, as described in Section~\ref{subsec:largep}. Consequently, the limiting value $\LamInf$ is not known exactly, although it is expected to be close to $1$ for all examples. In Figure~\ref{fig:11_fun_eig} on the right we observe non-monotonicity in $\lampApprox^{1/p}$, particularly pronounced for the mug and pig examples, which both have Dirichlet regions defined at points (or small $\epsilon$-balls). 
This behavior aligns with the rapid changes in the eigenfunctions for small $p$ observed in Figure~\ref{fig:12_fun} for these two cases.

\begin{figure}
    \centering
     \includegraphics[scale = 0.52]{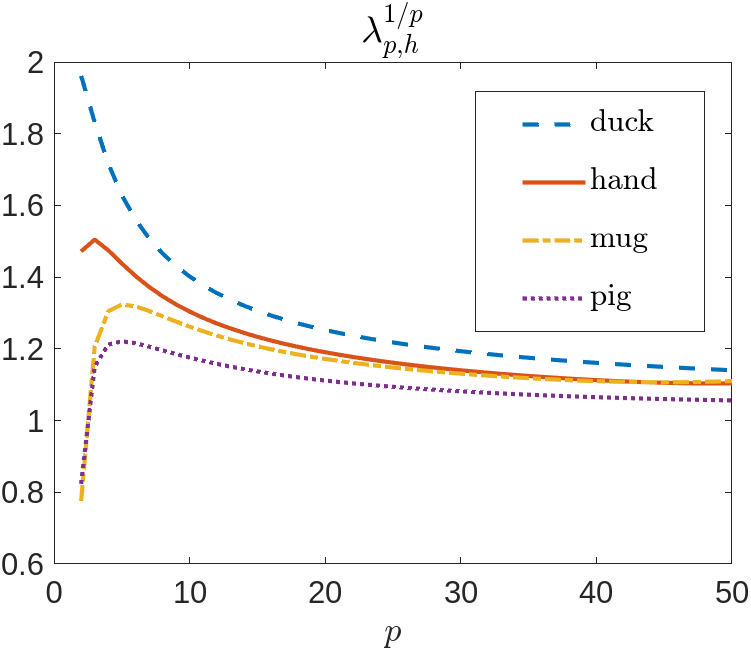}
    \hspace{1.5em}
     \includegraphics[scale = 0.52]{figs/fig11_right.png}
    \caption{Left: Eigenvalue approximations on some approximately optimally scaled domains via geodesic distance approximation for $ 2 \leq p \leq 50$ shown in log-log scale. Right: Corresponding $p$th roots of eigenvalue approximations. Note we do not know $\LamInf$ exactly due to geodesic distance being only approximate. The meshes consist of the following numbers of quadrilateral cells: duck ($32{,}064$), hand ($193{,}836$), mug ($360{,}888$), and pig ($50{,}454$). }
    \label{fig:11_fun_eig}
\end{figure}

Figure~\ref{fig:12_fun} shows the eigenfunctions $\upApprox$ on several surface meshes for a variety of $p$ values. The rows correspond to surfaces of increasing geometric or topological complexity: the duck mesh is similar to a torus, the hand mesh has a more ``natural" boundary at the wrist, and the mug and pig meshes impose Dirichlet conditions at a single point (or a small $\epsilon$-ball). For the point-constrained examples (mug and pig), the eigenfunctions exhibit significant variation at low $p$ so there is an observed sensitivity to the localized Dirichlet condition. As $p$ increases, the isolines in all examples become more evenly spaced and the solutions develop the characteristic nonsmooth features expected in the limiting $p \to \infty$ solution, even though the exact limiting solutions are not known. Figure~\ref{fig:13_ineq_check} verifies the monotonicity property of the first eigenvalue stated in inequality~\eqref{eqn:mono}, showing that the quantity $p \lampApprox^{1/p}$ is strictly increasing with $p$. The slope between consecutive discrete $p$ values is positive for all representative surfaces considered, confirming adherence to the inequality across our examples. Results are shown only for a few representative surface domains for clarity, however all examples under our consideration obeyed the inequality. 

\begin{figure}
    \centering
    \begin{subfigure}{0.18\textwidth}
        \centering
         \includegraphics[scale = 0.07]{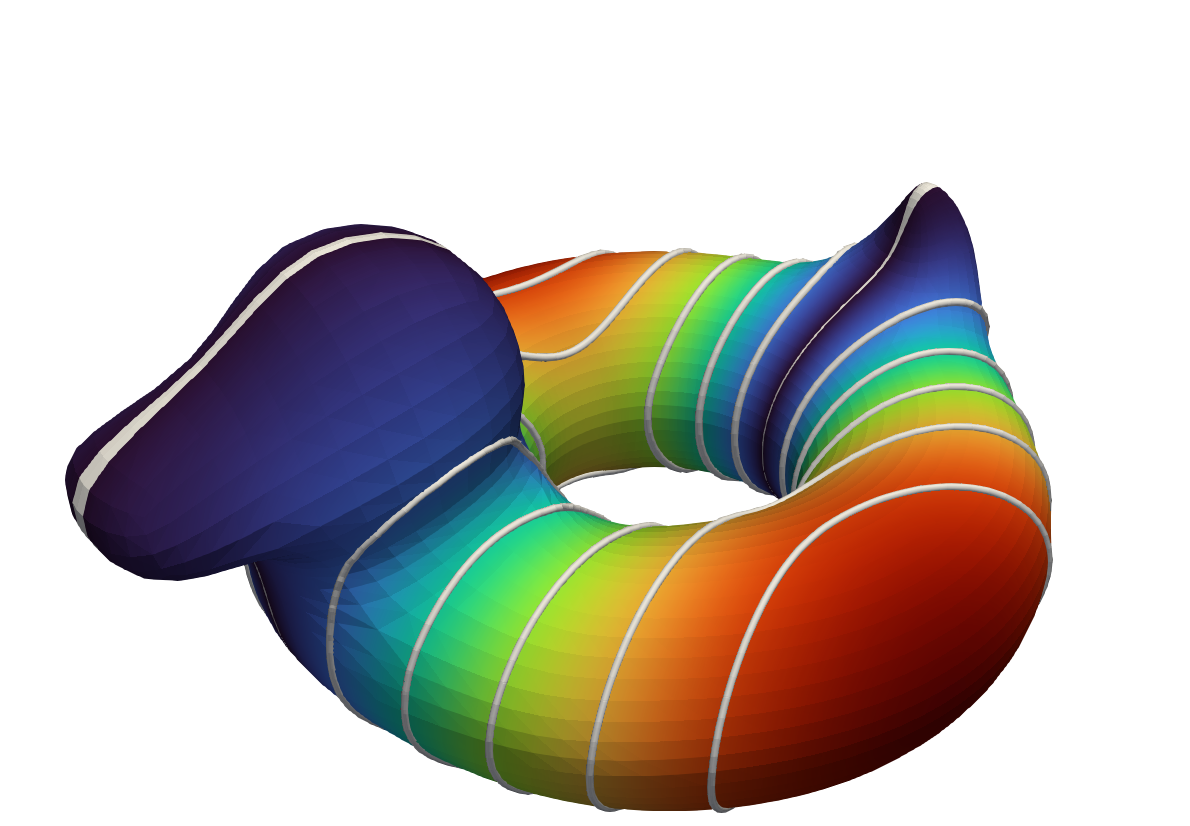}
           \subcaption{$p = 2$}
    \end{subfigure}
     \begin{subfigure}{0.18\textwidth}
        \centering
         \includegraphics[scale = 0.07]{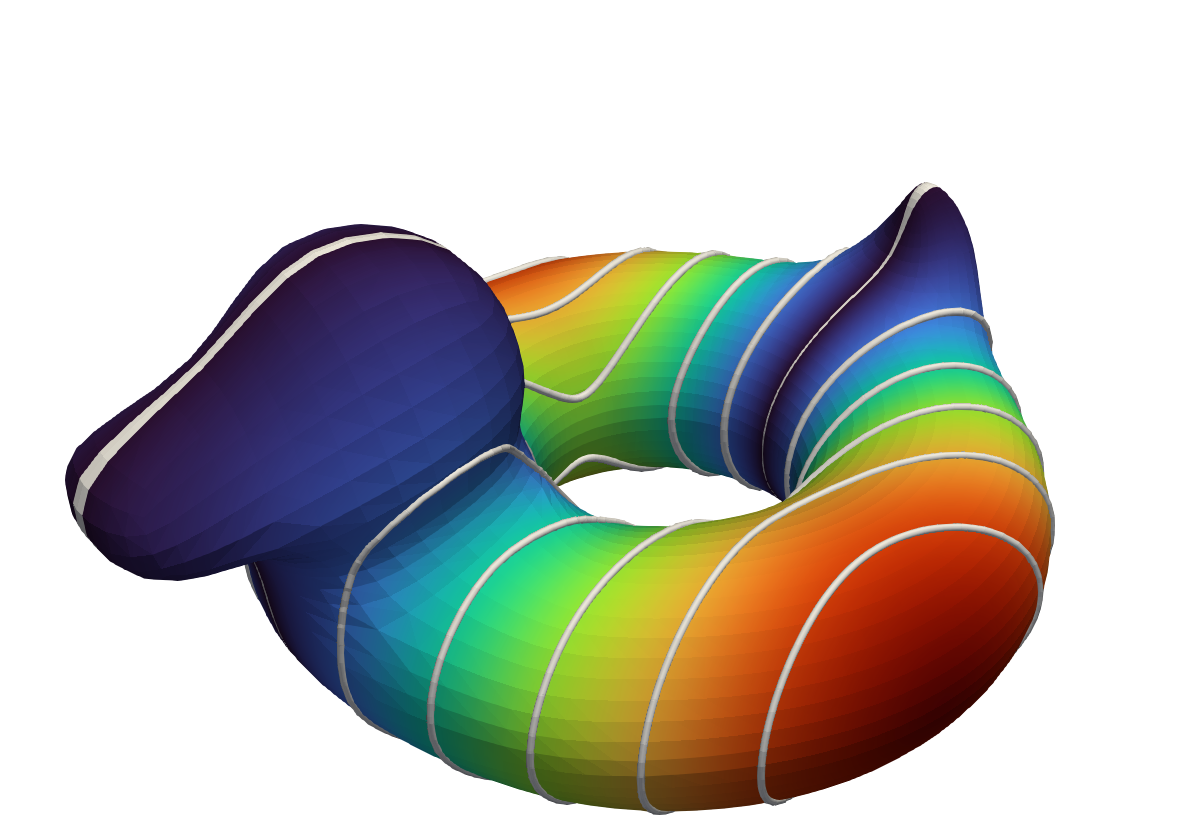}
           \subcaption{$p = 2.5$}
    \end{subfigure}
     \begin{subfigure}{0.18\textwidth}
     \centering
         \includegraphics[scale = 0.07]{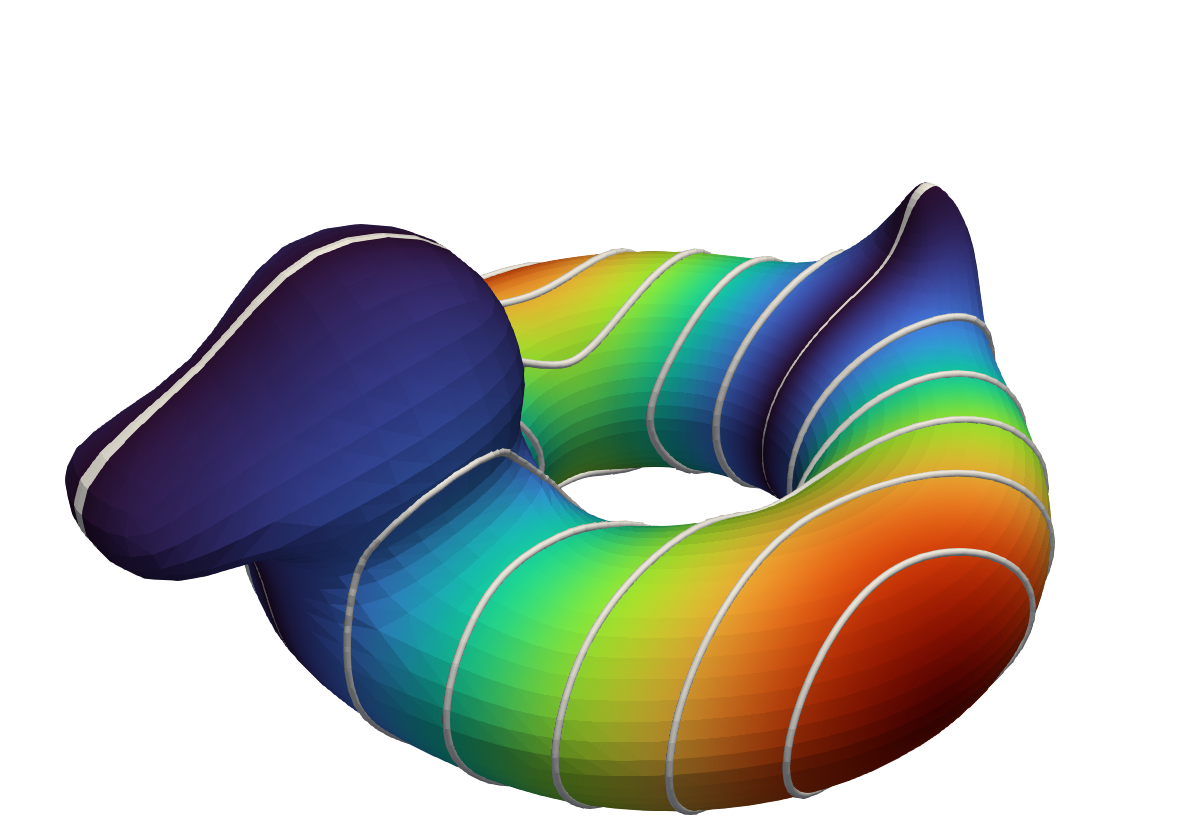}
          \subcaption{$p = 3$}
    \end{subfigure}
     \begin{subfigure}{0.18\textwidth}
        \centering
         \includegraphics[scale = 0.07]{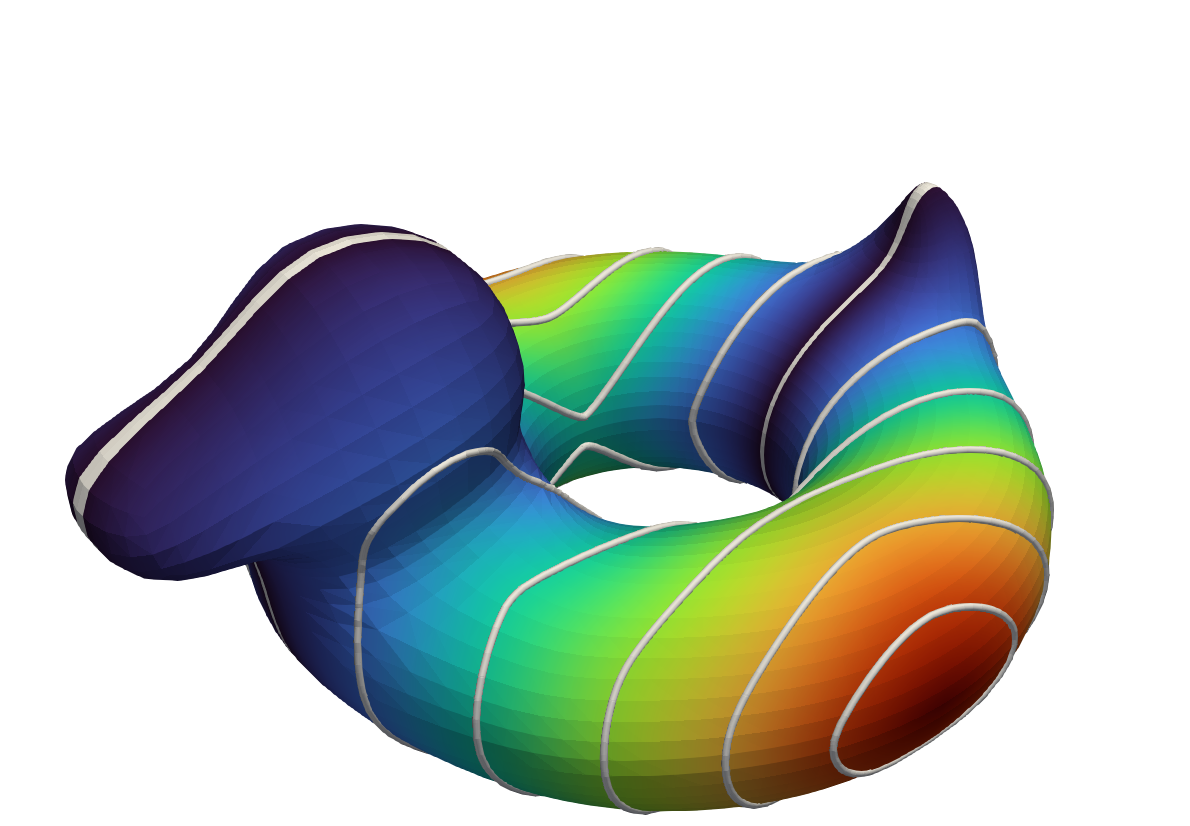}
           \subcaption{$p = 10$}
    \end{subfigure}
     \begin{subfigure}{0.18\textwidth}
     \centering
         \includegraphics[scale = 0.07]{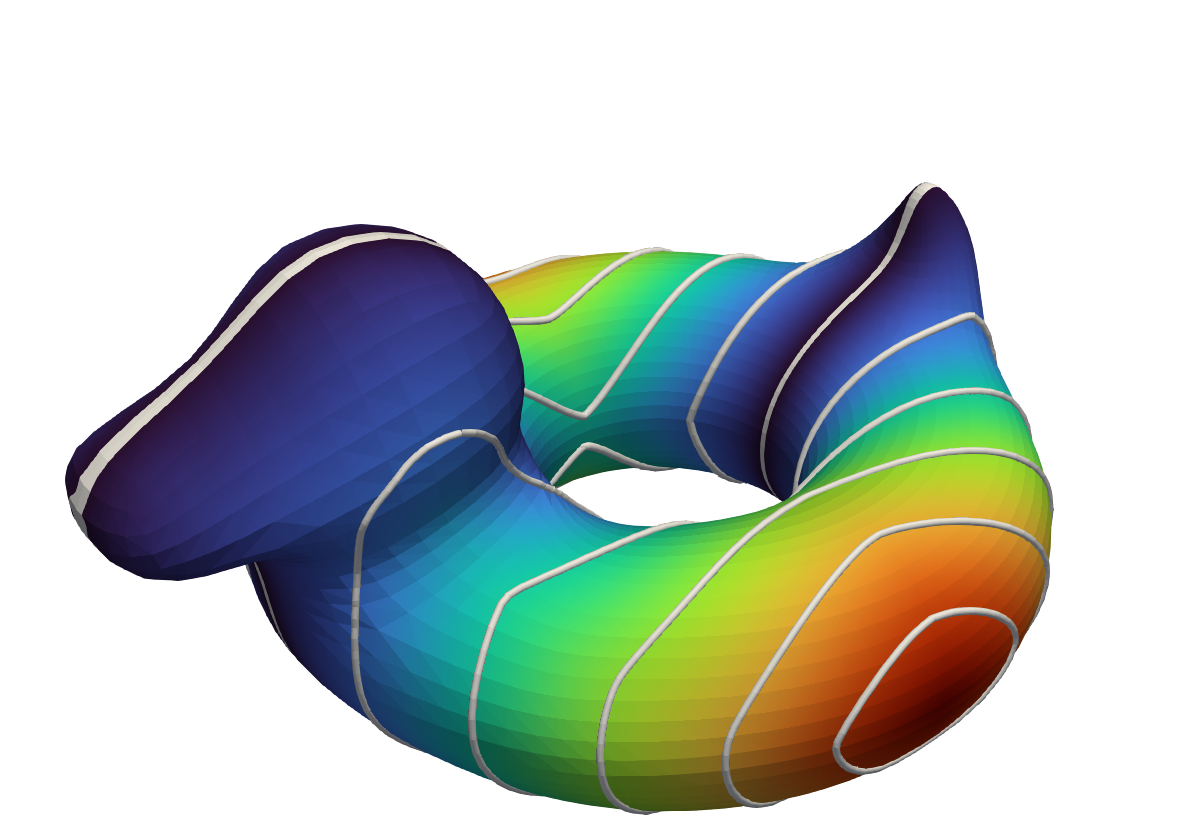}
          \subcaption{$p = 50$} 
    \end{subfigure}\\
        \begin{subfigure}{0.18\textwidth}
        \centering
         \includegraphics[scale = 0.1]{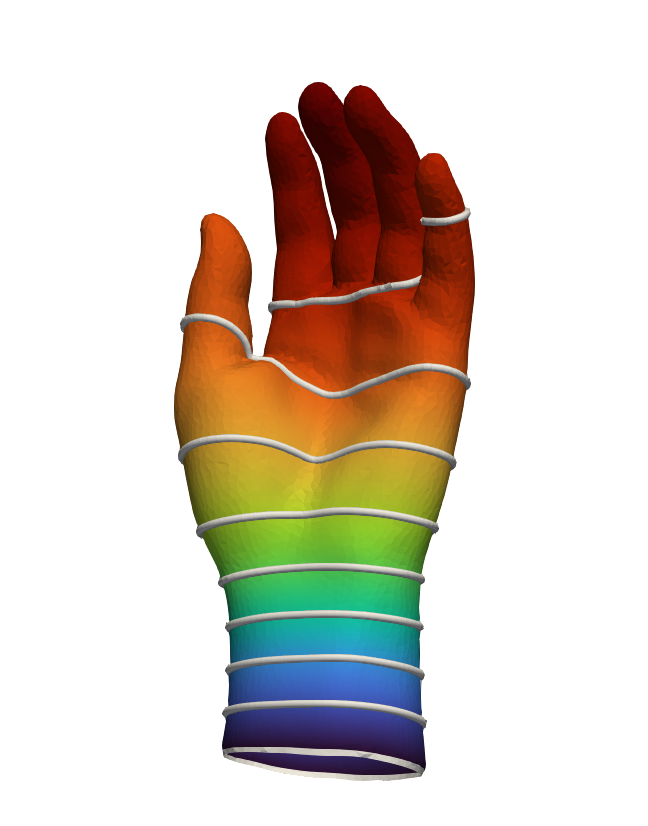}
           \subcaption{$p = 2$}
    \end{subfigure}
     \begin{subfigure}{0.18\textwidth}
        \centering
         \includegraphics[scale = 0.1]{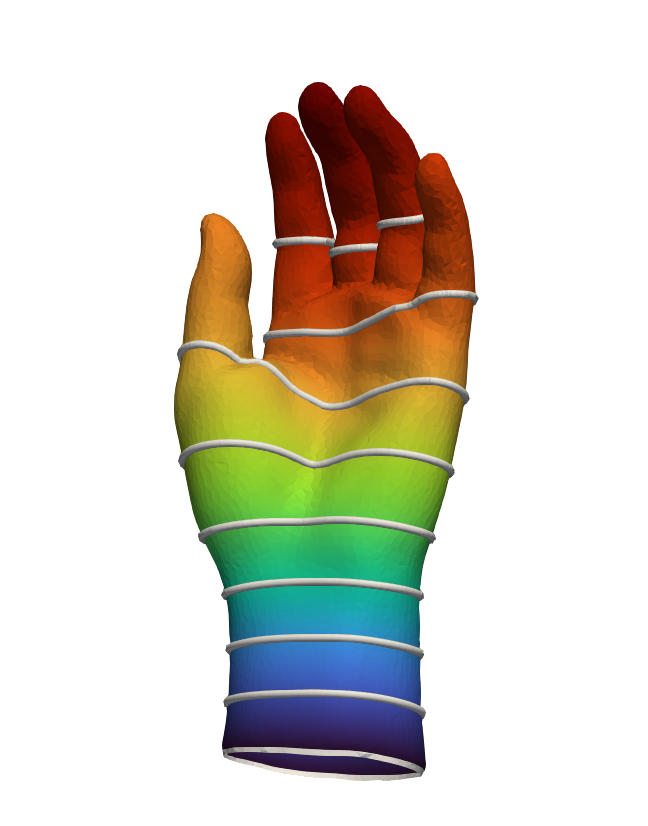}
           \subcaption{$p = 2.5$}
    \end{subfigure}
     \begin{subfigure}{0.18\textwidth}
     \centering
         \includegraphics[scale = 0.1]{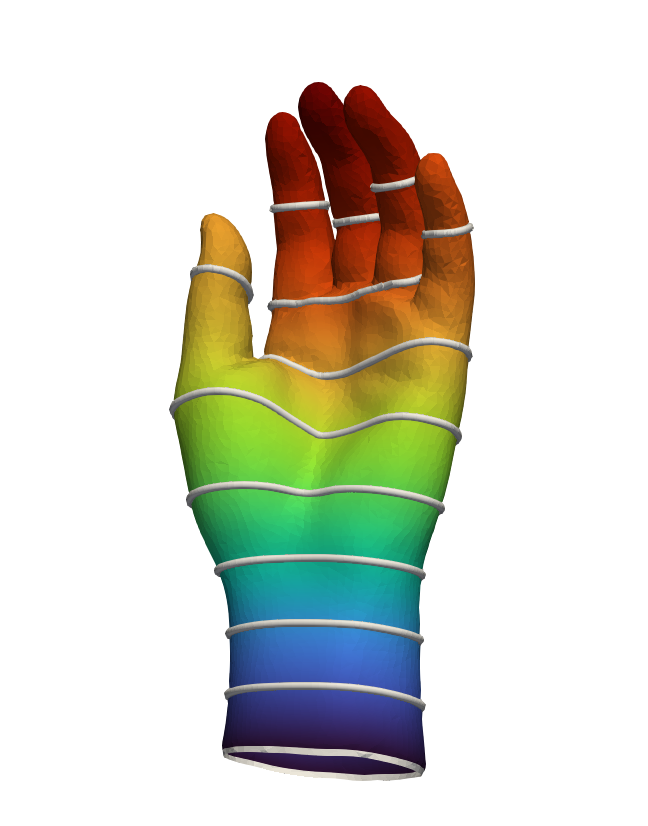}
          \subcaption{$p = 3$}
    \end{subfigure}
     \begin{subfigure}{0.18\textwidth}
        \centering
         \includegraphics[scale = 0.1]{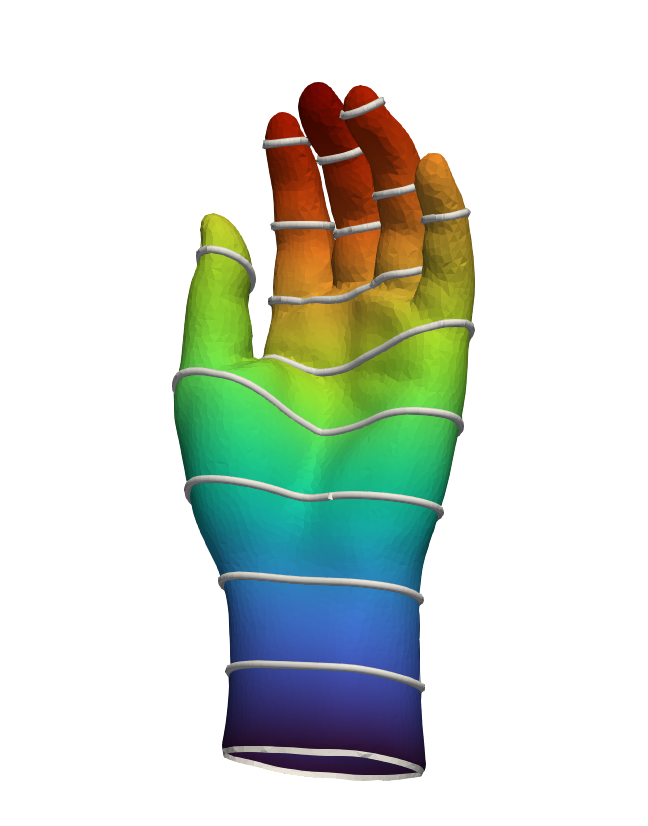}
           \subcaption{$p = 10$}
    \end{subfigure}
     \begin{subfigure}{0.18\textwidth}
     \centering
         \includegraphics[scale = 0.1]{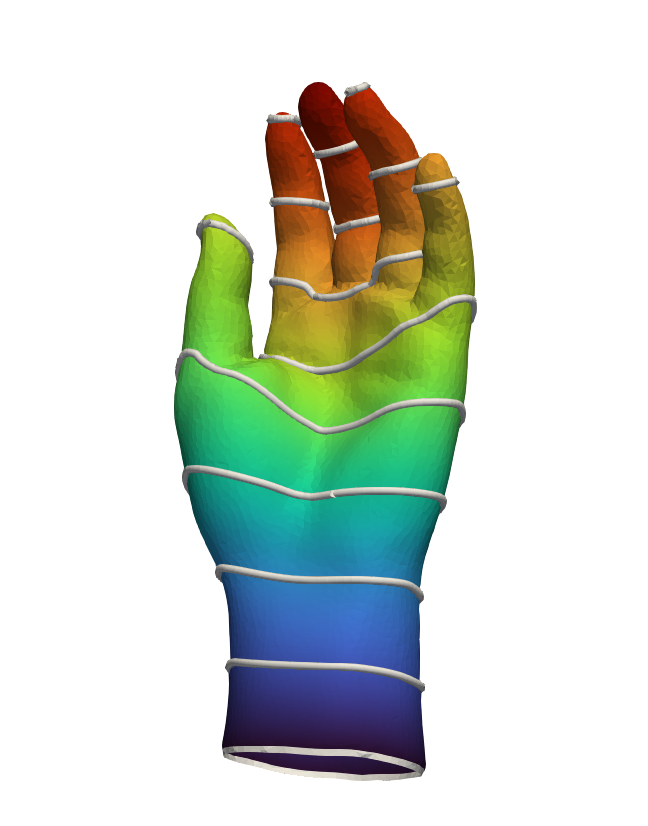}
          \subcaption{$p = 50$}
    \end{subfigure}\\ 
      \begin{subfigure}{0.18\textwidth} 
        \centering
         \includegraphics[scale = 0.07]{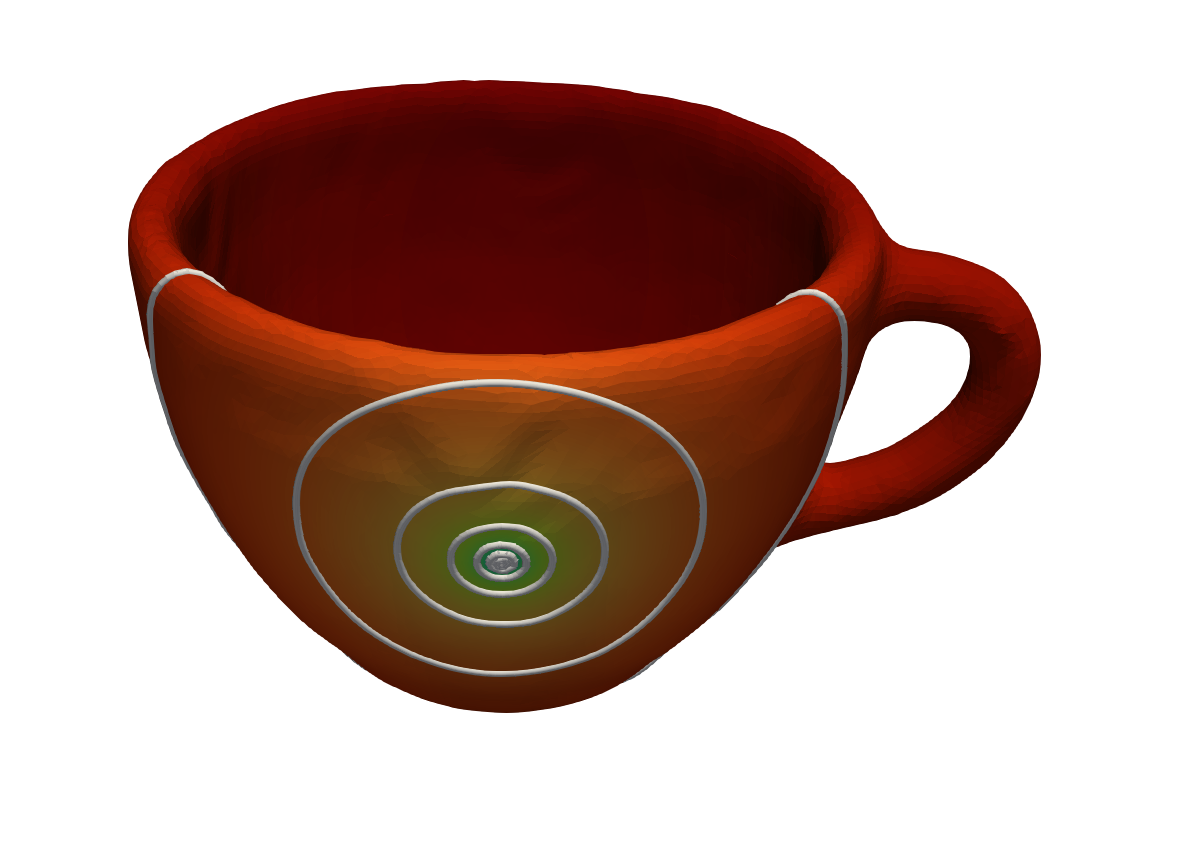}
           \subcaption{$p = 2$}
    \end{subfigure}
     \begin{subfigure}{0.18\textwidth}
        \centering
         \includegraphics[scale = 0.07]{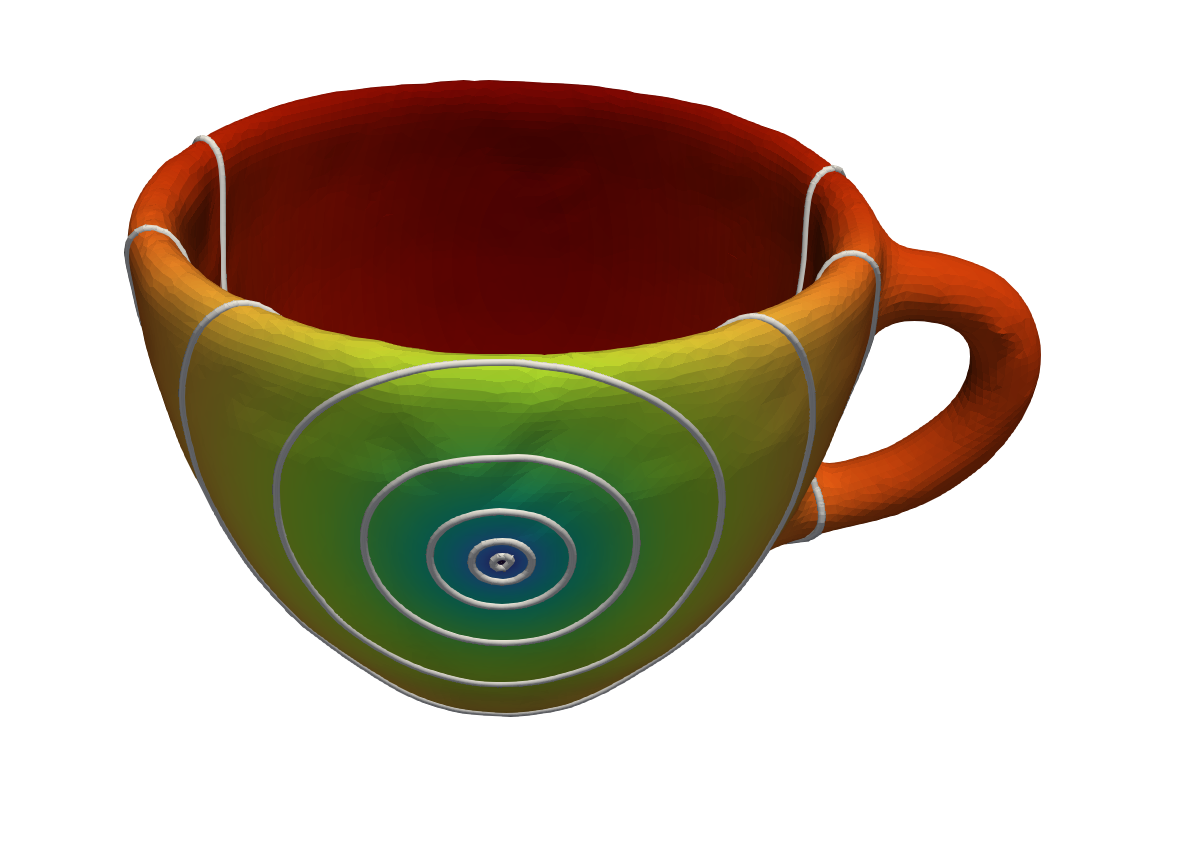}
           \subcaption{$p = 2.5$}
    \end{subfigure}
     \begin{subfigure}{0.18\textwidth}
     \centering
         \includegraphics[scale = 0.07]{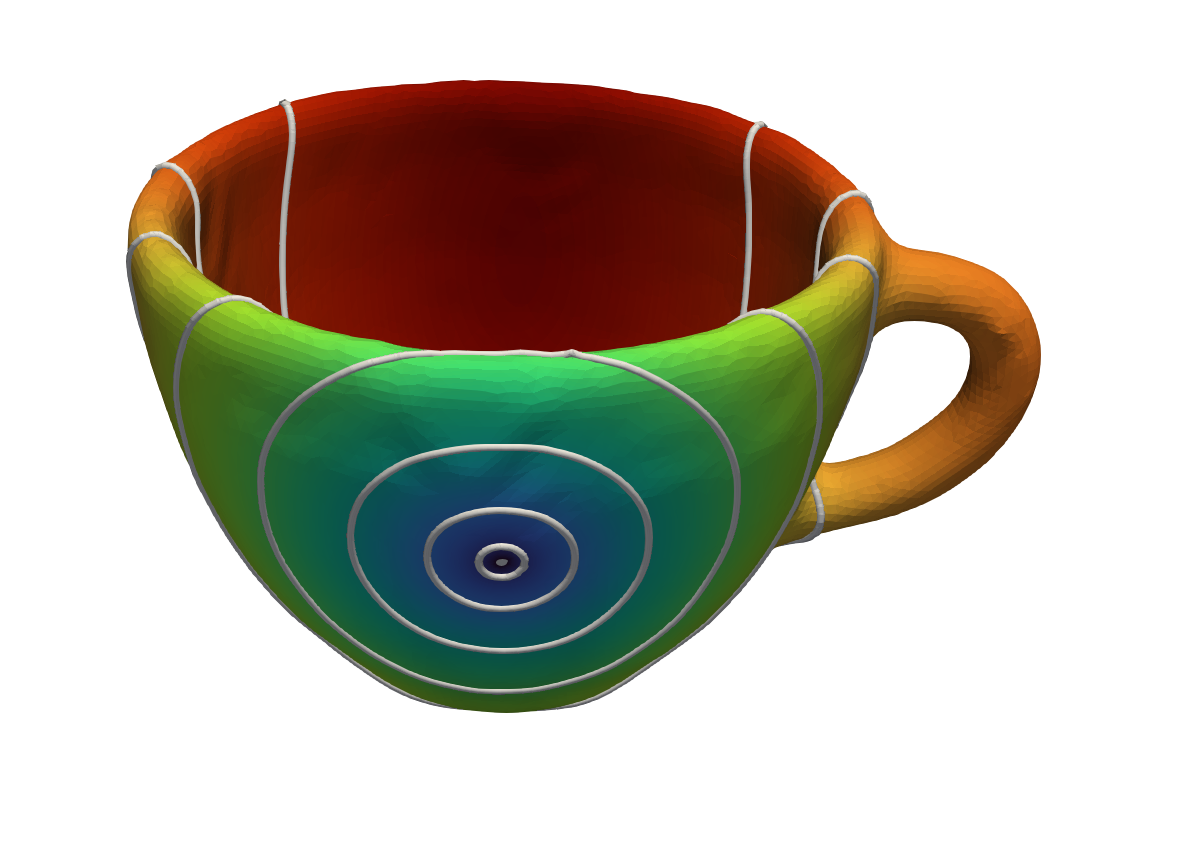}
          \subcaption{$p = 3$}
    \end{subfigure}
     \begin{subfigure}{0.18\textwidth}
        \centering
         \includegraphics[scale = 0.07]{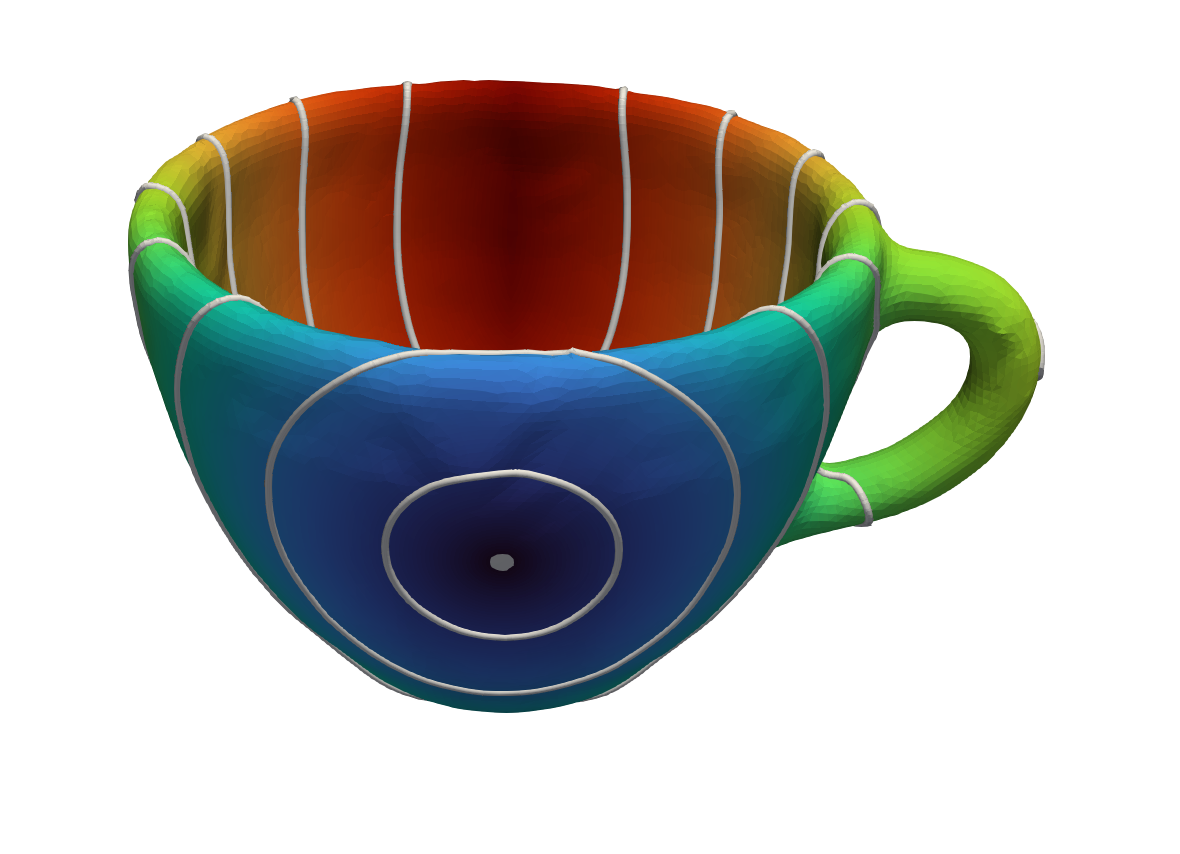}
           \subcaption{$p = 10$}
    \end{subfigure}
     \begin{subfigure}{0.18\textwidth}
     \centering
         \includegraphics[scale = 0.07]{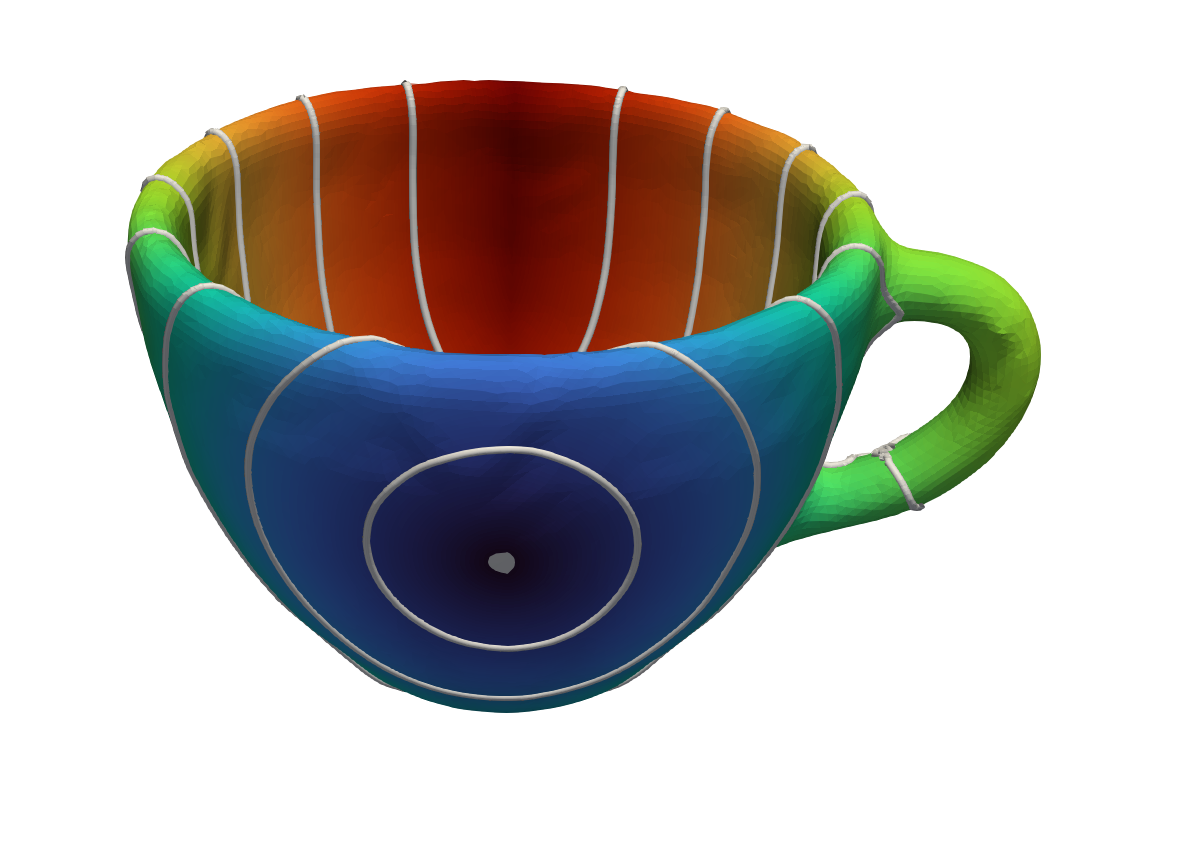}
          \subcaption{$p = 50$}
    \end{subfigure}\\
     \begin{subfigure}{0.18\textwidth}
        \centering
         \includegraphics[scale = 0.08]{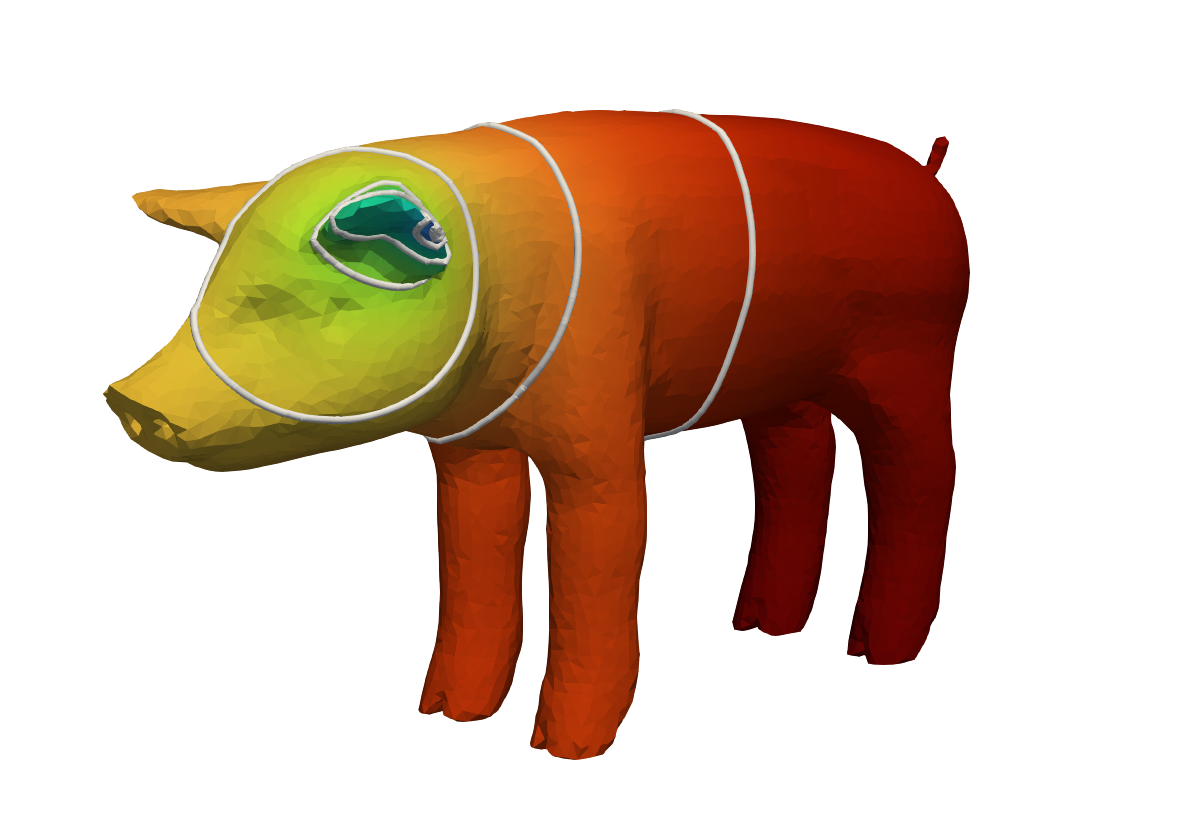}
           \subcaption{$p = 2$}
    \end{subfigure}
     \begin{subfigure}{0.18\textwidth}
        \centering
         \includegraphics[scale = 0.08]{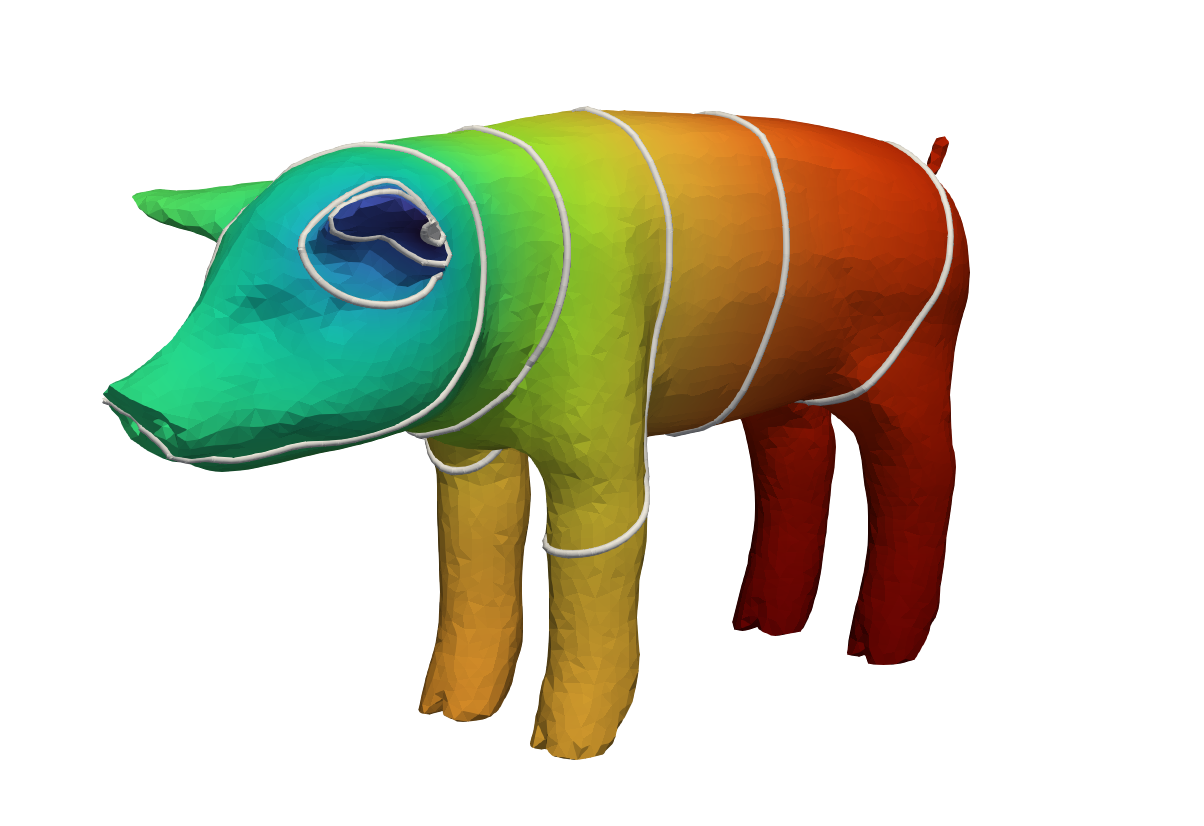}
           \subcaption{$p = 2.5$}
    \end{subfigure}
     \begin{subfigure}{0.18\textwidth}
     \centering
         \includegraphics[scale = 0.08]{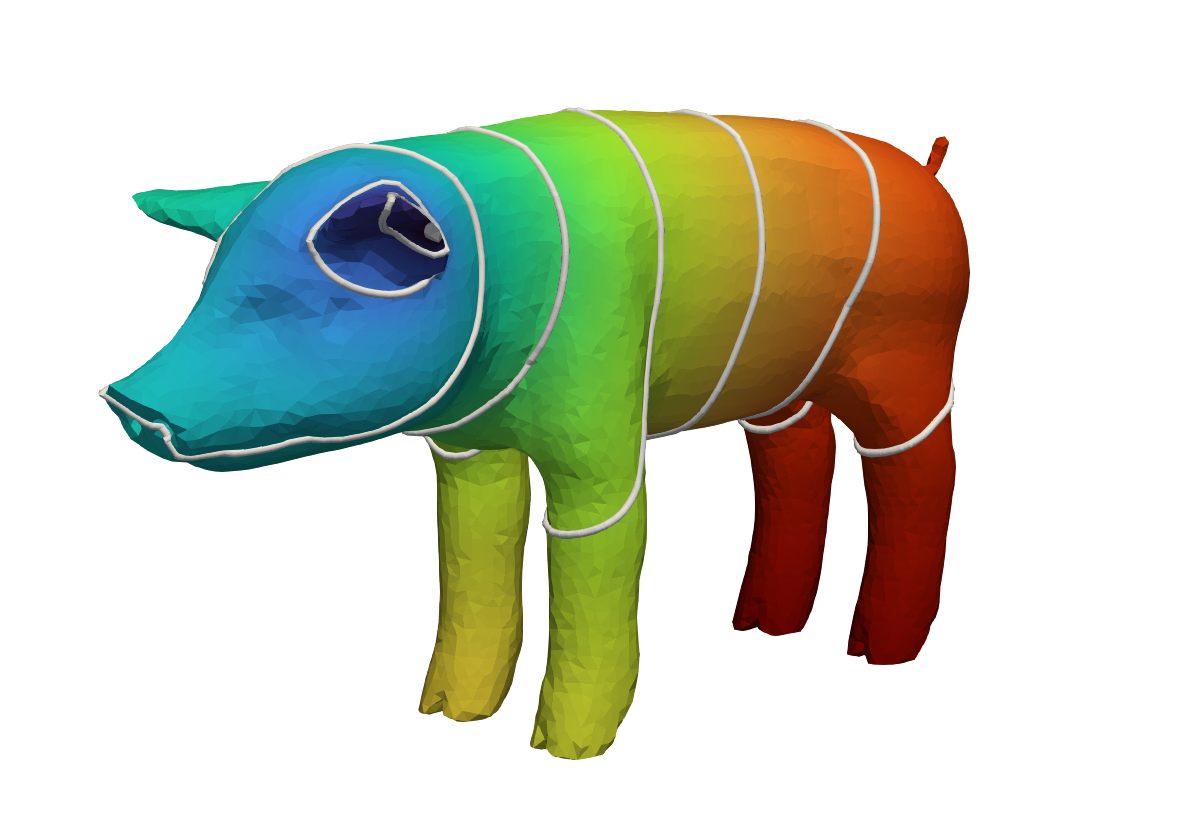}
          \subcaption{$p = 3$}
    \end{subfigure}
     \begin{subfigure}{0.18\textwidth}
        \centering
         \includegraphics[scale = 0.08]{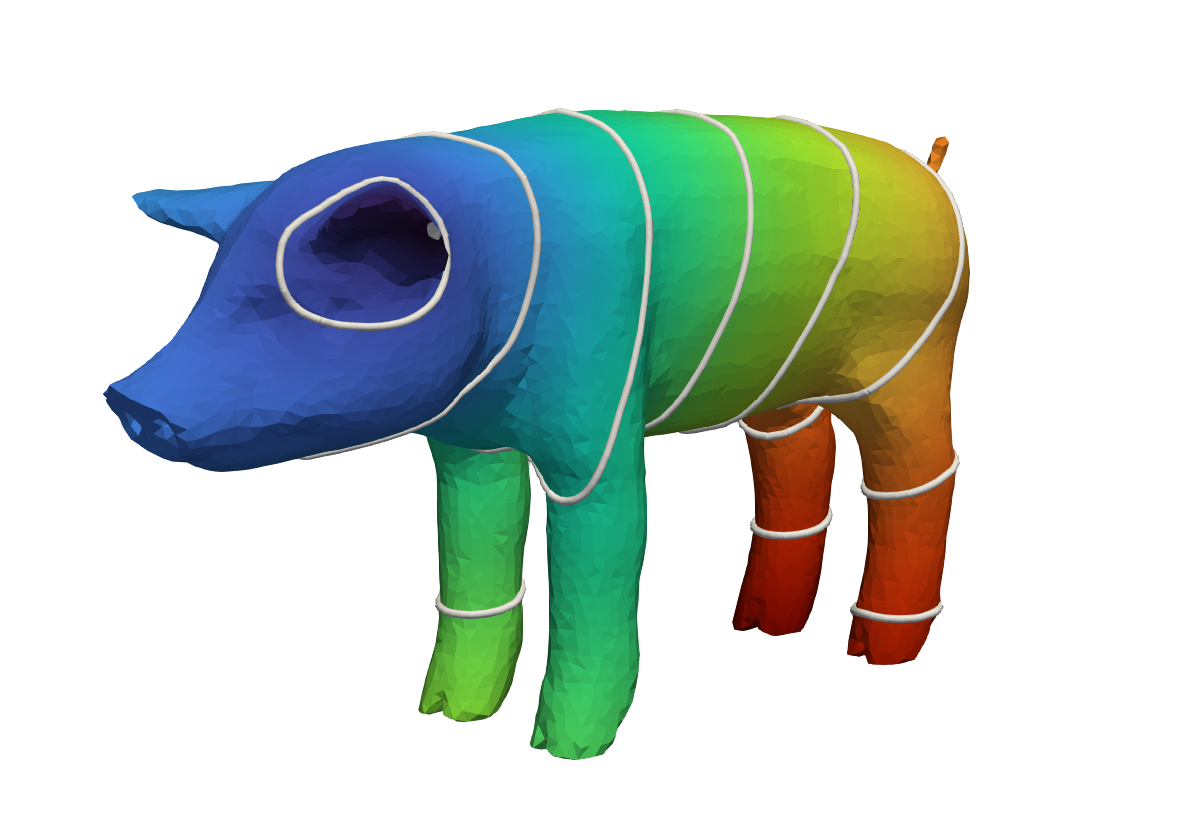}
           \subcaption{$p = 10$}
    \end{subfigure}
     \begin{subfigure}{0.18\textwidth}
     \centering
         \includegraphics[scale = 0.08]{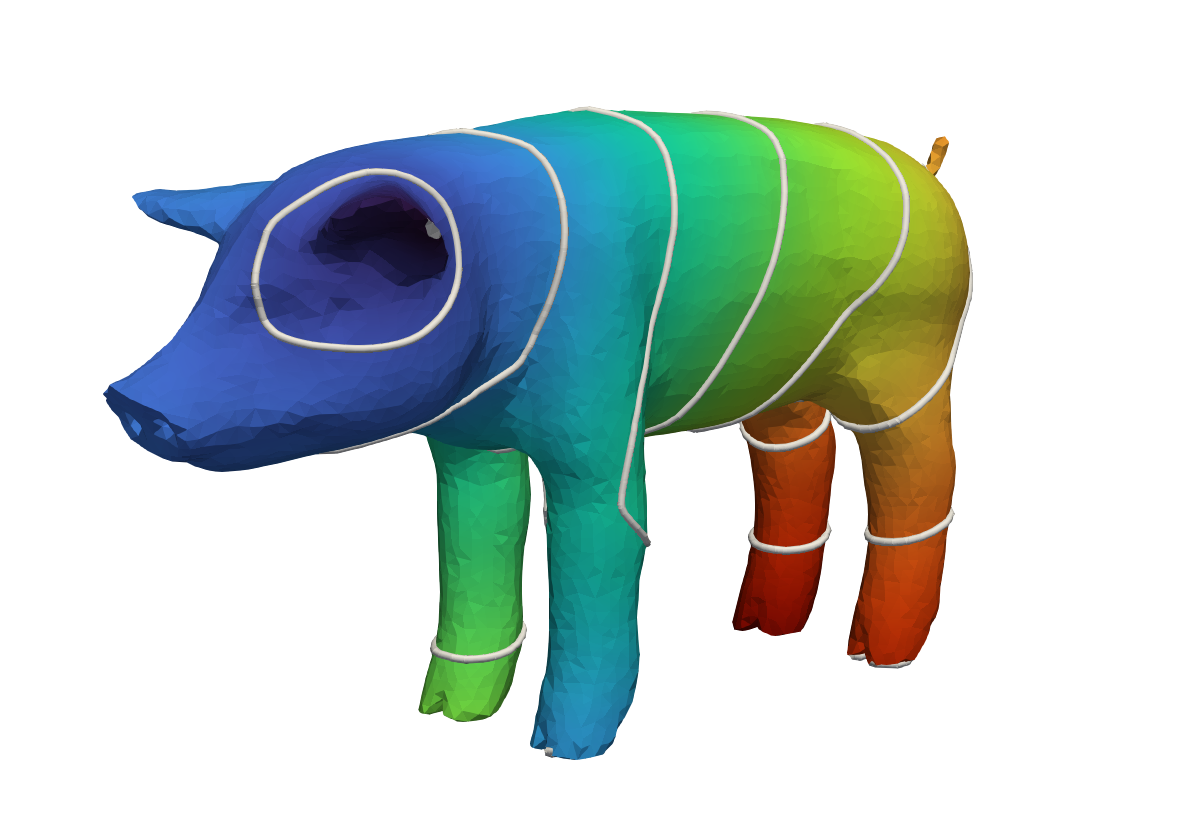}
          \subcaption{$p = 50$}
    \end{subfigure}\\
    \caption{Contour plots of $\upApprox$ on four surface meshes for various values of $p$. 
Row 1 (a)–(e): duck mesh with $32{,}064$ cells and $\partial\Omega$ taken along the central axis. $\Omega$ corresponds to one half of the surface, but both symmetric halves are shown for visualization.
Row 2 (f)–(j): hand mesh with $193{,}836$ cells and $\partial\Omega$ at the natural wrist boundary. 
Row 3 (k)–(o): mug mesh with $360{,}888$ cells and $\partial\Omega$ consisting of a single point (or small $\epsilon$-ball) on the outer surface. 
Row 4 (p)–(t): pig mesh with $50{,}454$ cells and $\partial\Omega$ consisting of a single point (or small $\epsilon$-ball) on the left ear. 
For visualization, the regions where the Dirichlet condition $\upApprox=0$ is imposed are shown in gray on each surface. 
    Within each row, contour levels are chosen consistently for visual clarity. 
    }
    \label{fig:12_fun}
\end{figure}

\begin{figure}
    \centering
    \includegraphics[scale = 0.5]{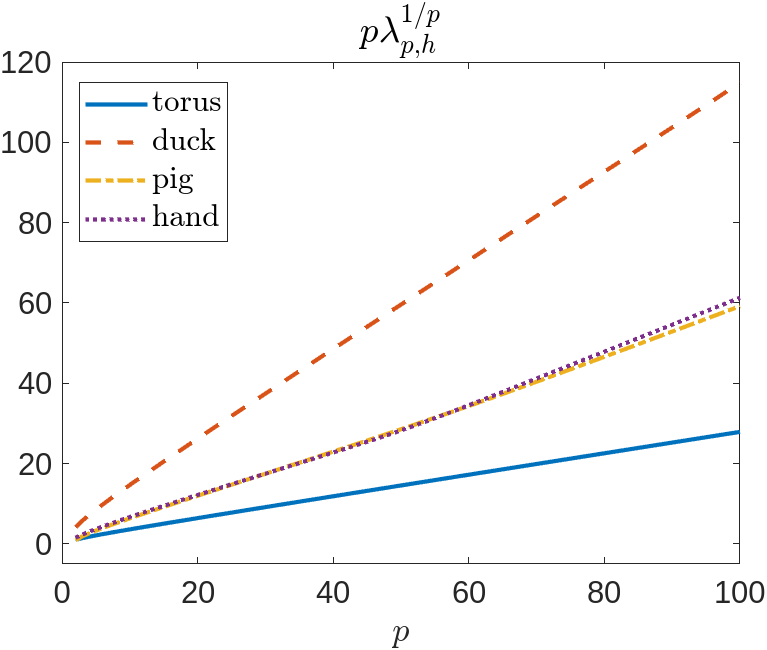}
    \caption{Verification that $p \lampApprox^{1/p}$ is strictly increasing in $p$ (see inequality~\eqref{eqn:mono}). We ensure that the slope is positive between consecutive discrete $p$ values. Results are plotted for some representative surface examples but note that adherence to the monotonicity inequality was observed in all of our considered examples. The meshes consist of the following numbers of quadrilateral cells: half torus ($784{,}896$), duck ($32{,}064$), pig ($50{,}454$), and hand ($193{,}836$).}
    \label{fig:13_ineq_check}
\end{figure}

\section{Conclusions}
\label{sec:conclusions}

In this work, we have studied the first $p$-Laplacian eigenpair on both Euclidean and surface (non-Euclidean) domains, with the extension to surface domains representing a novel contribution to the numerical literature. A key component of our approach is a new domain rescaling strategy that enables accurate computations for large $p$ values. We carried out numerical convergence studies, including comparisons with exact one-dimensional solutions, to assess the accuracy of our method. Additionally, by combining a Newton inverse-power iteration with $p$ continuation, we were able to efficiently compute eigenpairs across a wide range of $p$ (typically up to $p \sim 100$). For domains where the ridge set and maximal set of the distance-to-boundary function align exactly, our results numerically illustrate the connection between the eigenfunctions and the limiting (geodesic) distance-to-boundary function. Overall, we have demonstrated the effectiveness of our approach in handling complex geometries and provide a foundation for future work on higher eigenpairs, alternative boundary conditions, and potential applications of $p$-Laplacian eigenpairs such as shape optimization problems. 

\section*{Acknowledgments} 
The authors gratefully acknowledge the financial support of NSERC Canada (RGPIN 341834 for RF and RGPIN 2022-03302 for SR).

\bibliographystyle{plain}   
\bibliography{ref.bib} 

\appendix

\section{Regularity class of the $\sin_p$ function} 
\label{app:sinp-regularity}

We show that on a one-dimensional interval $\Omega = (a, b)$, $\up \in C^{1, 1/(p-1)}(\overline{\Omega})$ for finite $p>2$, by local asymptotic expansion for $\sin_p$ near the local maximum $\pi_p/2$, where a cusp forms. Note that for $p=2$, we have the ordinary sine function, and so $u_2 \in C^{\infty}(\overline{\Omega})$. 

The first eigenfunction is
\[
\up(x) = \sin_p \left(\pi_p \frac{x-a}{b-a} \right),
\]
and since $\sin_p$ attains a maximum at $\pi_p/2$, $\up$ attains its maximum at $x_0 = (a+b)/2$. 

Recall from Section~\ref{subsec:1d} that $\sin_p$ is defined through the inverse of the integral 
\[F_p(x) = \int_0^x (1- t^p)^{-1/p} dt, \quad F_p(1) = \frac{\pi_p}{2}.\] Therefore, to investigate local behavior near the maximum, we consider asymptotic behavior of $F_p$ near $1$.  

Let $\epsilon > 0$ be small. Then 
\begin{align*}
F_p(1) - F_p(1-\epsilon) &= \int_{1-\epsilon}^{1} (1- t^p)^{-1/p} dt \\
&= \int_0^{\epsilon} (1- (1-\tilde t)^p)^{-1/p} d\tilde t ,
\end{align*}
where we have applied the change of variable $\tilde t = 1-t$ so that for convenience we may expand about $\tilde t = 0$. We have 
\[
 1 - (1-\tilde t)^p 
= p \tilde t - \frac{p(p-1)}{2}\tilde t^2 + \mathcal{O}(\tilde t^3), 
\]
so, to leading order, our integrand becomes 
\[
( 1 - (1-\tilde t)^p)^{-1/p} = (p \tilde t)^{-1/p}\bigl(1+o(1)\bigr).
\]
Then
\[
F_p(1) - F_p(1-\epsilon)
\sim \int_0^{\epsilon} (p \tilde t)^{-1/p} d \tilde t 
= \frac{p^{(p-1)/p}}{p-1}\epsilon^{(p-1)/p}.
\]
Thus, for $\epsilon$ small we have
\[
F_p(1) - F_p(1-\epsilon)
= \frac{p^{(p-1)/p}}{p-1}\;\epsilon^{(p-1)/p}\bigl(1+o(1)\bigr),
\]

Since $\sin_p(x) = F_p^{-1}(x)$, for $t$ near $\pi_p/2$, we have
\[\frac{\pi_p}{2}-t = F_p(1) - F_p(\sin_p(t)) \sim \frac{p^{(p-1)/p}}{p-1} \left(1-\sin_p (t)\right)^{(p-1)/p}.\]
Taking the $p/(p-1)$th root we obtain
\[1-\sin_p(t) \sim p^{-1}{(p-1)^{p/(p-1)}} \left|t-\frac{\pi_p}{2}\right|^{p/(p-1)}.\]
Hence, for $x$ near $x_0$,
\begin{equation}
\label{eqn:sinp-asym}
     \up(x) = \sin_p \left(\pi_p \frac{x-a}{b-a} \right) \sim 1-K_p
\left|x-x_0\right|^{p/(p-1)},
\end{equation}
where $K_p := p^{-1}{(p-1)^{p/(p-1)}} \left( \frac{\pi_p}{b-a} \right)^{p/(p-1)}$.

The local approximation~\eqref{eqn:sinp-asym} shows the formation of a cusp at $x_0$ for $p>2$. Indeed, differentiating~\eqref{eqn:sinp-asym} yields
\begin{equation}
    \label{eqn:sinp'-asym}
    \up'(x) 
    \sim -\frac{p}{p-1} K_p \left|x-x_0\right|^{1/(p-1)} \mathrm{sign}(x-x_0),
\end{equation}
for $x$ near $x_0$. Therefore, $\up'$ is H\"older continuous at $x_0$, with exponent $1/(p-1)$. Also, $\up$ is smooth away from the maximum at $x_0$, as $F_p(x)$ is smooth for $x<1$. We conclude that $\up'\in C^{0,1/(p-1)}(\overline{\Omega})$, and hence $\up\in C^{1,1/(p-1)}(\overline{\Omega})$. Furthermore, for any $p>2$, $\up$ is not $C^2$, and the H\"older exponent of $\up'$ decreases to $0$ as $p \to \infty$.

Finally, note that for $p=2$ the local expansion near $x_0$ reduces to
\[
u_p(x)\sim 1-\tfrac12 \left(\frac{\pi}{b-a}\right)^2 (x-x_0)^2,
\]
corresponding to a smooth maximum, as consistent with the Taylor expansion of the regular sine function.  

\section{Geodesic distance function as the limiting $p \to \infty$ eigenfunction}
\label{app:surface-limits}

In this appendix we show that, when the set of maximal distance points coincides with the ridge of the distance function,
the limiting $p \to \infty$ eigenfunction equals the normalized distance-to-boundary function. 

The definition of a viscosity solution as in~\cite{Katzourakis2015} for the limiting $p \to \infty$ PDE~\eqref{eqn:limitpdea} is provided below and will be used in the proof of the proposition to follow.

\begin{definition}[Viscosity solution]\label{def:visc-sol}
Let $\Omega$ be an open subset of a surface $S$ and denote by $B_g(\bx_0,r)$ a geodesic ball centered at $\bx_0$ of radius $r$. A continuous function $u: \overline{\Omega}\to\mathbb{R}$ is said to be a 
\emph{viscosity subsolution} (respectively, \emph{supersolution}) of
\[
\min \left\{|\gradS u| - \LamInf u, -\infLapS u\right\}=0,
\qquad\text{in }\Omega,
\]
if for every $C^2$ smooth $\psi:  B_g(\bx_0,r) \subseteq \Omega\to \mathbb{R}$ such that $u - \psi$ has a local maximum (respectively, minimum) at $\bx_0$ with $\psi(\bx_0) = u(\bx_0)$, the following holds:
\begin{equation*}
\min \left\{|\gradS \psi(\bx_0)| - \LamInf \psi(\bx_0), -\infLapS \psi(\bx_0)\right\} \leq 0 \qquad \text{(respectively, $\geq 0$)}.
\end{equation*}
This is equivalent to 
\begin{align*}
\text{(subsolution)}\qquad 
& |\gradS \psi(\bx_0)|-\LamInf \psi(\bx_0) \leq 0 
\quad \;\:\text{or}\;\quad
-\infLapS \psi(\bx_0) \leq 0,\\
\text{(supersolution)}\qquad 
& |\gradS \psi(\bx_0)|-\LamInf \psi(\bx_0)  \geq 0 
\quad\text{and}\quad  
-\infLapS \psi(\bx_0) \geq 0.
\end{align*}

If $u$ is both a viscosity subsolution and supersolution,
it is called a \emph{viscosity solution}.
\end{definition}

\begin{proposition} Let $\Omega$ be an open and bounded subset of a surface $S$, and assume that the set $\mathcal{M}$ of maximal distance-to-boundary points coincides exactly with the ridge set $\mathcal{R}$ (see~\eqref{eqn:calR} and~\eqref{eqn:calM}).
Then the function $d(\bx) = \distx$ is a positive viscosity solution of
\begin{subequations}
\label{eqn:limitpde-appendix}
    \begin{alignat}{2}
    \min  \left \{|\gradS u|-\LamInf u , -\infLapS u\right \} = 0, & \qquad \text{in } \Omega,  \label{eqn:limitpde2}\\
    u = 0, & \qquad \text{on } \partial \Omega,  \label{eqn:bc2}
    \end{alignat} 
\end{subequations}
where $\LamInf = \|d\|_\infty^{-1} $.
\label{prop1}
\end{proposition}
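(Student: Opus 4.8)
The plan is to verify the viscosity subsolution and supersolution conditions separately for $d(\bx) = \distx$ on $\Omega$, using the distinguishing hypothesis $\mathcal{M} = \mathcal{R}$ at the crucial step. The boundary condition \eqref{eqn:bc2} and positivity in $\Omega$ are immediate from the definition of the distance function, so the work is entirely in checking \eqref{eqn:limitpde2} in the viscosity sense at interior points. I would split $\Omega$ into two regions: the points $\bx_0 \notin \mathcal{R}$ where $d$ is differentiable (in fact smooth in a neighborhood, since the distance function is $C^2$ away from the ridge on a smooth surface), and the points $\bx_0 \in \mathcal{R} = \mathcal{M}$.

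First I would handle the smooth region $\bx_0 \notin \mathcal{R}$. There, $|\gradS d(\bx_0)| = 1$ (the eikonal property of the geodesic distance), while $d(\bx_0) < \|d\|_\infty$, so $|\gradS d(\bx_0)| - \LamInf d(\bx_0) = 1 - d(\bx_0)/\|d\|_\infty > 0$. Hence for the supersolution inequality I must additionally show $-\infLapS d(\bx_0) \ge 0$; for the subsolution inequality the first term being $> 0$ forces me to instead show $-\infLapS d(\bx_0) \le 0$, so in fact I need $\infLapS d(\bx_0) = 0$ there. This is the standard fact that the geodesic distance function satisfies $\infLapS d = 0$ away from the ridge: along a distance-minimizing geodesic the function $d$ grows linearly in arclength, so its second derivative in the gradient direction vanishes. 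Since $d$ is $C^2$ near $\bx_0$, no test function is needed — one just evaluates the operators directly and uses $\gradS d = $ unit tangent to the minimizing geodesic together with $\HessS d\,[\gradS d] = 0$.

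Next, the harder region: $\bx_0 \in \mathcal{R} = \mathcal{M}$, i.e.\ $d(\bx_0) = \|d\|_\infty$. For the \emph{supersolution} test we take a $C^2$ function $\psi$ with $d - \psi$ having a local minimum at $\bx_0$ and $\psi(\bx_0) = d(\bx_0)$; we must show $|\gradS\psi(\bx_0)| - \LamInf\psi(\bx_0) \ge 0$ and $-\infLapS\psi(\bx_0) \ge 0$. The first is easy: $\psi(\bx_0) = \|d\|_\infty$ so $\LamInf\psi(\bx_0) = 1$, and because $d \ge \psi$ near $\bx_0$ with equality at $\bx_0$ while $d$ is $1$-Lipschitz, any smooth function touching $d$ from below at a maximum point of $d$ must satisfy $|\gradS\psi(\bx_0)| \le 1$... but we need $\ge 1$; in fact the correct observation is that $\bx_0$ is also a local max of $\psi$ over nothing forces the gradient — here I would instead argue $|\gradS\psi(\bx_0)| \le 1$ is automatic and show $-\infLapS\psi(\bx_0)\ge 0$ directly, since at an interior maximum of $d$ touched from below, $\psi$ has a critical point-like behavior. \textbf{This is the step I expect to be the main obstacle}: making precise that at a maximal point $\bx_0$ that is simultaneously a ridge point, every admissible test function $\psi$ touching from below has $\gradS\psi(\bx_0)=0$, which then makes both supersolution inequalities trivial ($0 - 1 \le 0$ is the wrong sign, so actually one must be careful: when $\gradS\psi(\bx_0)=0$ the supersolution condition reads $\min\{-\LamInf\|d\|_\infty,\,0\} = \min\{-1,0\}=-1 \not\ge 0$).

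Resolving this apparent contradiction is the real content, and it is exactly where $\mathcal{M}=\mathcal{R}$ enters: I would argue that under this hypothesis there is in fact \emph{no} $C^2$ function touching $d$ from below at $\bx_0$ with $\gradS\psi(\bx_0)=0$, because the ridge structure forces $d$ to have a genuine downward corner (a concave kink) at $\bx_0$ — two or more minimizing geodesics arrive from different directions, so $d$ decreases linearly (slope $-1$) in every direction away from $\bx_0$, hence $d - \psi$ cannot have a local minimum there unless $\psi$ also decreases with slope $\le -1$ in those directions, which is impossible for a $C^2$ function with a critical point. Thus the supersolution condition at $\mathcal{M}=\mathcal{R}$ is \emph{vacuously} satisfied. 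For the \emph{subsolution} test, $d - \psi$ has a local maximum at $\bx_0$; then $\psi \ge d$ near $\bx_0$ with equality at $\bx_0$, and since $d$ has the downward-corner behavior just described, a smooth $\psi$ lying above it with equality at the tip must satisfy $\gradS\psi(\bx_0) = 0$, so $|\gradS\psi(\bx_0)| - \LamInf\psi(\bx_0) = 0 - 1 < 0$, giving the subsolution inequality immediately. I would write the corner argument carefully using the representation $d(\bx) = \inf_{\by\in\partial\Omega} d_g(\bx,\by)$ and the fact that for $\bx_0 \in \mathcal{M}$ the minimizing set is nondegenerate (at least two geodesics) precisely when $\bx_0 \in \mathcal{R}$, which is the assumed coincidence; I would cite \cite{juutinen1998} and \cite{CharroParini} for the relevant geometric facts about ridge sets and refer to Definition~\ref{def:visc-sol} for the test-function formalism throughout.
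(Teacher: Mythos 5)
Your decomposition ($\bx_0\in\Omega\setminus\mathcal{R}$ versus $\bx_0\in\mathcal{R}=\mathcal{M}$) and your treatment of the smooth region are exactly the paper's. At the ridge you take a genuinely different route: you try to show the supersolution test is \emph{vacuous} (no $C^2$ function touches $d$ from below) and that subsolution test functions are forced to have vanishing gradient. The paper instead does not ask whether admissible test functions exist: it invokes the viscosity form of the Eikonal equation to get $|\gradS\psi(\bx_0)|-\LamInf\psi(\bx_0)\ge 0$, and then proves $-\infLapS\psi(\bx_0)\ge 0$ for any $\psi$ touching from below by following the geodesic $\bgamma(t)=\exp_{\bx_0}(t\,\gradS\psi(\bx_0))$, bounding $d(\bgamma(h))\le d(\bx_0)+h$ via the triangle inequality, and extracting $\langle\HessS\psi(\bx_0)\bv,\bv\rangle\le 0$ from a second-order Taylor expansion. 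Your vacuousness strategy can in principle be made rigorous (it is essentially the emptiness of the subdifferential of a semiconcave function at its non-differentiability points), but as written it rests on two false geometric claims.

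First, you assert that a smooth $\psi$ touching $d$ from above at $\bx_0\in\mathcal{M}=\mathcal{R}$ must have $\gradS\psi(\bx_0)=0$. This is false: on the unit disk, $\psi(x,y)=1-x$ lies above $d=1-\sqrt{x^2+y^2}$ with equality at the origin and $|\gradS\psi|=1$. The subsolution inequality still holds, but the correct statement is $|\gradS\psi(\bx_0)|\le 1=\LamInf\psi(\bx_0)$ (the $1$-Lipschitz property of $d$, i.e.\ the viscosity subsolution property of the Eikonal equation, which is what the paper uses), not your computation ``$0-1<0$''. Second, you justify vacuousness by claiming $d$ decreases with slope $-1$ in \emph{every} direction away from $\bx_0$. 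This fails precisely on the domains the paper highlights as satisfying $\mathcal{R}=\mathcal{M}$ with a non-trivial ridge (stadium, annulus, half torus): along the ridge curve $d$ is constant. The conclusion you want survives, but needs a different argument: a ridge point admits two distinct unit minimizing directions $\bv_1\ne\bv_2$ along which $d(\bgamma_i(s))=d(\bx_0)-s$, so any $\psi$ touching from below satisfies $\langle\gradS\psi(\bx_0),\bv_i\rangle\le -1$ for $i=1,2$, while the $1$-Lipschitz bound gives $|\gradS\psi(\bx_0)|\le 1$; Cauchy--Schwarz then forces $\gradS\psi(\bx_0)=-\bv_1=-\bv_2$, a contradiction. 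Without these repairs the ridge case of your proof does not stand; with them it becomes a valid, and arguably slicker, alternative to the paper's Taylor-expansion argument, at the cost of needing the structural fact that non-differentiability of $d$ is equivalent to multiplicity of nearest boundary points.
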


\begin{proof}
Away from the ridge, a standard direct calculation may be carried out to show that the equation is satisfied in the classical sense. We present this calculation first; the calculation is a direct extension from the Euclidean case. We will then show that the equation is satisfied in the viscosity sense at the ridge. 
\smallskip

{\em Case $\bx_0 \in \Omega \setminus \mathcal{R}$.} 
Away from the ridge set, the distance-to-boundary function $d(\cdot)$ is smooth.  At any point $\bx_0 \in \Omega \setminus \mathcal{R}$, let $\by_0 \in \partial \Omega$ denote the unique closest boundary point, and let $\bgamma(s)$ be the unit-speed geodesic on $S$ connecting $\bx_0$ to $\by_0$.
 By taking the second derivative of $d(\cdot)$ along the geodesic $\bgamma(s)$ we find
 \begin{equation}
 \label{eqn:d2ds2}
 \frac{d^2}{ds^2} d(\bgamma(s)) \Big|_{s=0} = \dot{\bgamma}(0)^\top \HessS d(\bx_0) \, \dot{\bgamma}(0). 
 \end{equation}
 On the other hand, from $d(\bgamma(s)) = d(\bx_0) - s$ we infer that the second derivative of $d(\cdot)$ along the geodesic vanishes:
 \begin{equation*}
 \frac{d^2}{ds^2} d(\bgamma(s)) \Big|_{s=0} =0,
 \end{equation*}
 which combined with~\eqref{eqn:d2ds2} gives
 \begin{equation}
 \label{eqn:gdotHsgdot}
 \dot{\bgamma}(0)^\top \HessS d(\bx_0) \, \dot{\bgamma}(0) =0.
 \end{equation}

 By definition of the distance function, $d(\bgamma(s))$ decreases at the maximal rate in the direction of the gradient and hence, $\gradS d(\bgamma(s))$ is tangent to $\bgamma(s)$. In particular, we have
 \[
 \gradS d(\bx_0) = -\dot{\bgamma}(0),
 \]
 and by combining with~\eqref{eqn:gdotHsgdot} we find 
 \[
 \gradS d(\bx_0)^\top \HessS d(\bx_0) \, \gradS d(\bx_0) = 0.
 \]
 Then, by the definition of the surface infinity Laplacian, we get
 \begin{equation}
 \label{eqn:Deltainf0}
 -\infLapS d(\bx_0) = 0.
 \end{equation}

 Finally, note that $|\gradS d(\bx_0)|=1$, and by definition of $\mathcal{M}$ and $\LamInf$, we have
 \[
 d(\bx_0) < \| d \|_\infty = \Lambda_\infty^{-1}.
 \]
 Hence, 
  \[
  |\gradS d(\bx_0)|-\LamInf d(\bx_0) > 0,
 \]
 which together with~\eqref{eqn:Deltainf0} shows~\eqref{eqn:limitpde2}. 

\smallskip

{\em Case $\bx_0 \in \mathcal{R}$.}
The geodesic distance-to-boundary function $d(\cdot)$ satisfies the Eikonal equation, $|\gradS d| = 1$, in the viscosity sense at the ridge~\cite{MantegazzaMennucci2003}.  Therefore for any $\bx_0 \in \mathcal{R} = \mathcal{M}$, as $d(\bx_0)=\LamInf^{-1}$, we have
\begin{equation}
\label{eqn:limitpde-pt1} 
   |\gradS d(\bx_0)| - \LamInf d(\bx_0) = 0,
\end{equation}
in the viscosity sense, meaning $d(\cdot)$ is both a viscosity supersolution and viscosity subsolution of~\eqref{eqn:limitpde-pt1}~\cite{Katzourakis2015}. 

\smallskip
{\em (i) Viscosity subsolution for $\bx_0 \in \mathcal{R}$.}
We will show that $d(\cdot)$ is a viscosity subsolution of~\eqref{eqn:limitpde2} according to Definition~\ref{def:visc-sol}. We already know that $d(\cdot)$ is a viscosity subsolution of~\eqref{eqn:limitpde-pt1}. Thus, $ |\gradS d(\bx_0)|-\LamInf d(\bx_0) \leq 0$ in the viscosity sense, which is enough as it implies $\min  \left \{ |\gradS d|-\LamInf d , -\infLapS d\right \} \leq 0$ in the viscosity sense at the ridge. 

\smallskip
{\em (ii) Viscosity supersolution for $\bx_0 \in \mathcal{R}$.}
Now we will show that $d(\cdot)$ is a viscosity supersolution of~\eqref{eqn:limitpde2} according to Definition~\ref{def:visc-sol}. 
Suppose $r > 0$, $\bx_0 \in \Omega$, and $\psi:  B_g(\bx_0,r) \subseteq \Omega\to \mathbb{R}$ is a smooth function such that $d - \psi$ has a local minimum at $\bx_0$ with $\psi(\bx_0) = d(\bx_0)$. As $d(\cdot)$ is a viscosity supersolution of the Eikonal equation, it holds that 
\begin{equation}
|\gradS \psi(\bx_0)| - \LamInf \psi(\bx_0) \geq 0.
\label{eqn:Eikonalsuper}
\end{equation}
It remains to show that $-\infLapS \psi(\bx_0) \geq 0$.

Since the supersolution condition~\eqref{eqn:Eikonalsuper} along with the touching condition $\psi(\bx_0) = d(\bx_0) = \LamInf^{-1}$ ensures 
\begin{equation}
  |\gradS\psi(\bx_0)| \geq \LamInf\psi(x_0) = 1,
  \label{eqn:gradgeq1}
\end{equation}
we may let $\bv = \gradS\psi(\bx_0)/|\gradS\psi(\bx_0)|$.
Now consider the unit-speed geodesic that starts from $\bx_0$ in the direction $\bv$, that is, $\bgamma(t) = \exp_{\bx_0}(t\bv)$. Here, $\exp$ denotes the Riemannian exponential map. For $ 0 < h < r$, let $\by_h \in \partial\Omega$ be a point such that $\mathrm{dist}(\bgamma(h), \by_h) = d(\bgamma(h))$, and similarly, let $\by_0 \in \partial\Omega$ satisfy $\mathrm{dist}(\bx_0 , \by_0) = d(\bx_0)$.

Then,
\begin{equation}
\label{eqn:est-dgammah}
\begin{aligned}
d(\bgamma(h)) &\leq \mathrm{dist}(\bgamma(h), \by_0) \\[2pt]
& \leq \mathrm{dist}(\bgamma(h), \bx_0) + \mathrm{dist}(\bx_0, \by_0) \\[2pt]
& = h + d(\bx_0),
\end{aligned}
\end{equation}
where for the second line we used the triangle inequality, and for the third we used that the geodesic $\bgamma$ has unit speed, and hence, $\mathrm{dist}(\bgamma(h), \bx_0) = h$.

Since $d - \psi$ has a local minimum at $\bx_0$, 
\[
 \psi(\bgamma(h)) - \psi(\bx_0) \leq d(\bgamma(h)) - d(\bx_0),
 \]
 which combined with~\eqref{eqn:est-dgammah} leads to
 \begin{equation}
 \label{eqn:psi-ineq}
 \psi(\bgamma(h)) - \psi(\bx_0) \leq h.
 \end{equation}
Taylor expand $\psi$ along $\bgamma$ (use $\bgamma'(0) = \bv$) to get 
\[
\psi(\bgamma(h)) = \psi(\bx_0) + h\langle \gradS \psi(\bx_0), \bv \rangle + \frac{h^2}{2}\langle \HessS \psi(\bx_0)\bv, \bv \rangle + o(h^2).
 \]
Combining the Taylor expansion with~\eqref{eqn:psi-ineq} we find
 \begin{equation}
      h |\gradS\psi(\bx_0)| + \frac{h^2}{2}\langle \HessS \psi(\bx_0)\bv, \bv\rangle + o(h^2) \leq h.
      \label{eqn:Taylorcombine}
 \end{equation}
Dividing by $h$ and taking the $h \to 0^+$ limit gives 
 \[
  |\gradS\psi(\bx_0)| \leq 1,
 \]
 and from~\eqref{eqn:gradgeq1} we know \(
  |\gradS\psi(\bx_0)| \geq  1\).
 Therefore, it holds that $ |\gradS\psi(\bx_0)| = 1$ and so~\eqref{eqn:Taylorcombine} becomes  
 \[
       h + \frac{h^2}{2}\langle \HessS \psi(\bx_0)\bv, \bv\rangle + o(h^2) \leq h.
 \]
Now subtract $h$ on both sides of the inequality above, and divide by $h^2$ to get
\[
\frac{1}{2}\bv^{\top} \HessS \psi(\bx_0) \bv + \frac{o(h^2)}{h^2} \leq 0.
\]
Taking $h \to 0^+$ yields
\[
\bv^{\top} \HessS \psi(\bx_0) \bv \leq 0.
\]
Since $\bv= \gradS\psi(\bx_0)/|\gradS\psi(\bx_0)| = \gradS\psi(\bx_0)$, this is equivalent to
\[
-\infLapS \psi(\bx_0) = -\gradS\psi(\bx_0)^{\top} \HessS \psi(\bx_0) \gradS \psi(\bx_0) \geq 0.
\]

\smallskip
The boundary condition~\eqref{eqn:bc2} is trivially satisfied. Thus, $d$ indeed satisfies~\eqref{eqn:limitpde-appendix} in the viscosity sense at the ridge. 

\end{proof}

\begin{remark}
\label{rmk:uinf-dist}
Proposition~\ref{prop1} implies that $\uInf(\cdot) = d(\cdot)/\|d\|_\infty$ satisfies the $p \to \infty$ limiting eigenvalue problem given by~\eqref{eqn:limitpde}. 

\end{remark}

\end{document}